\documentclass[11pt]{amsart}

\usepackage{amssymb,amsfonts}
\usepackage{graphicx}
\usepackage[all,arc]{xy}
\usepackage{enumerate}
\usepackage{mathrsfs}
\usepackage{tikz-cd}
\usepackage{float}
\usepackage[T1]{fontenc}
\usetikzlibrary{calc}

\usetikzlibrary{decorations.pathreplacing}

\usepackage{mathtools}
\usepackage{ytableau}
\usepackage[left=1in,top=1in,right=1in,bottom=1in]{geometry}

\newtheorem{thm}{Theorem}[section]
\newtheorem*{thm*}{Theorem}

\newtheorem{cor}[thm]{Corollary}
\newtheorem{prop}[thm]{Proposition}
\newtheorem{lem}[thm]{Lemma}

\theoremstyle{definition}
\newtheorem{defn}[thm]{Definition}

\newtheorem{con}[thm]{Construction}
\newtheorem{exmp}[thm]{Example}

\newtheorem{rem}[thm]{Remark}
\newtheorem{ques}[thm]
{Question}

\theoremstyle{remark}

\newcommand{\Hom}{\mathrm{Hom}}

\newcommand{\Spec}{\text{Spec}\,}

\renewcommand{\emptyset}{\varnothing}

\makeatletter
\let\c@equation\c@thm
\makeatother
\numberwithin{equation}{section}

\usepackage{hyperref}

\title{Chow Vanishing and Motives of Cluster Varieties}
\author{Josephine Hlavinka}
\address{Josephine Hlavinka, Department of Mathematics, University of California, Berkeley,
         970 Evans Hall, Berkeley, CA 94720-3840, USA}
\email{josephhlavinka@berkeley.edu}

\begin{document}
    \begin{abstract}
    We prove that the integral Chow groups $CH^i$ and mixed Hodge degree $H^{2i, (i, i)}$ cohomology groups of really full rank (RFR) sink-recurrent cluster varieties vanish for $i > 0$. In particular this applies to braid varieties and open Richardson varieties in any Lie type. Our main tool is the construction of a stratification of any RFR sink-recurrent cluster variety $\mathcal{A}(\Sigma)$ into (affine spaces times) RFR sink-recurrent cluster varieties of seeds with fewer mutable vertices than $\Sigma$.
    
    We employ the theory of Voevodsky motives, and towards this end we prove that the cycle class maps are isomorphisms onto the lowest-weight part of rational Borel-Moore homology for any mixed Tate variety over a number field. We then show that RFR sink-recurrent cluster varieties have mixed Tate and, in fact, split motives. Finally, we use our results to deduce vanishing theorems about the Khovanov-Rozansky homology groups of closures of positive braids and generation properties of the cohomology of closed Richardson, projected Richardson, and brick varieties.
\end{abstract}
\maketitle
\tableofcontents
\bigskip
\section{Introduction}

\subsection{Background and Results}

Let $\mathcal{A}(\Sigma)$ denote the cluster variety of a skew-symmetrizable seed $\Sigma$. A crucial development in cluster geometry over the last decade has been the realization that many cluster varieties which arise in practice -- including braid varieties, open Richardson varieties, and positroid cells (see \cite{MR4868947} and \cite{galashin2026braid}) -- are of the form $\mathcal{A}(\Sigma)$ for a \textit{really full rank} (RFR) and \textit{sink-recurrent} seed $\Sigma$. The import of this realization comes from the fact that in the same span of time, strong constraints have been discovered on the cohomology of any RFR sink-recurrent $\mathcal{A}(\Sigma)$. To start, all such cluster varieties are smooth by the main theorem of \cite{muller2013locally}. But more interesting things are true: for example, in this setting the mixed Hodge structures on the cohomology of $\mathcal{A}(\Sigma)$ are mixed Tate, i.e. $H^{i, (p, q)}(\mathcal{A}(\Sigma)) = 0$ if $p \neq q$; and if $\mathcal{A}(\Sigma)$ is even-dimensional then $H^*(\mathcal{A}(\Sigma))$ enjoys a ``curious" hard Lefschetz theorem with respect to certain classes $[\gamma] \in H^{2, (2, 2)}(\mathcal{A}(\Sigma))$. These results are due to \cite{lam2022cohomology} under the assumption that $\Sigma$ is Louise and skew-symmetric and to \cite{galashin2026braid} in general.\\

In turn, many RFR sink-recurrent cluster varieties have been shown to admit mixed Hodge structures which determine other interesting invariants. For example, the torus equivariant mixed Hodge groups of a braid variety $X(\beta \Delta)$ compute the $a = 0$ part of the Khovanov-Rozansky homology of the $(-1)$-framed closure of $\beta$ \cite{MR5110495}; certain mixed Hodge groups of open Richardson varieties compute Ext groups between Verma modules; and the mixed Hodge Poincar\'e polynomials of certain open positroid varieties compute $(q, t)$-Catalan numbers \cite{MR4809331}.\\

An emerging feature in this study is that RFR sink-recurrent varieties often enjoy certain cohomological vanishing properties, and that these vanishing theorems can be used to actually compute some of the above invariants. This technique appears in the proof of Theorem 1.6 of \cite{barkley2026combinatorial}, which shows that for Verma modules $M_u$, $M_v$ in any Lie type, the groups \begin{equation}
    \text{Ext}^{\ell(v) - \ell(u) - 1}(M_u, M_v)
\end{equation} are a combinatorial invariant of the interval $[u, v]$ in the Bruhat order on the corresponding Weyl group. Their proof comes down to the observation that the mixed Hodge groups $H^{1, (0, 0)}(R_{u,v}^\circ)$ and $H^{2, (1, 1)}(R_{u,v}^\circ)$ of the open Richardson variety $R_{u,v}^\circ$ associated to $[u, v]$ are both $0$.\\

Two particular complements to their result are worth mentioning. One is the main theorem of \cite{cao2024valuation}, which shows that all primitive full rank (and in particular, all RFR) cluster varieties have vanishing Picard groups: $CH^1(\mathcal{A}(\Sigma)) = 0$. Another is Corollary 1.4 of \cite{LSII}, which says (after an application of curious Poincar\'e symmetry) that if $\Sigma$ is skew-symmetric and acyclic then \cite{barkley2026combinatorial}'s vanishing holds in \textit{every} degree. That is, for an acyclic full rank quiver $Q$, we have that \begin{equation}
H^{2i, (i, i)}(\mathcal{A}(Q)) = 0
\end{equation}
for all $i > 0$.\\

The main computational result of this paper is that the same Chow and cohomological vanishing theorems hold in all degrees and for all really full rank sink-recurrent cluster varieties.

\begin{thm}[Theorem \ref{chow/cohom theorem}]
    Let $\mathcal{A}(\Sigma)$ be a $d$-dimensional RFR sink-recurrent cluster variety. Then the cohomological Chow groups $CH^i(\mathcal{A}(\Sigma))$ vanish in all degrees $i > 0$: 
     \begin{equation}
        CH^i(\mathcal{A}(\Sigma)) = \begin{cases}  0 & i > 0 \\
    \mathbb{Z} & i = 0.
   \end{cases} \end{equation} By Theorem \ref{cycle class iso} this implies that $H^{2i, (i, i)}(\mathcal{A}(\Sigma)) = 0$ for $i > 0$, and curious Poincar\'e symmetry then implies that
    $H^d(\mathcal{A}(\Sigma)) = H^{d, (d, d)}(\mathcal{A}(\Sigma)) \cong \mathbb{Q}$.
\end{thm}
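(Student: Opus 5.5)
The plan is to induct on the number $n$ of mutable vertices of $\Sigma$, using the stratification promised in the abstract. Every RFR sink-recurrent $\mathcal{A}(\Sigma)$ has a closed/open decomposition into locally closed pieces, each isomorphic to $\mathbb{A}^k \times \mathcal{A}(\Sigma')$ with $\Sigma'$ RFR sink-recurrent and having fewer mutable vertices. The open stratum is a torus $T \cong \mathbb{G}_m^d$ (a cluster chart), or a chart times something smaller. The base case $n=0$ is $\mathcal{A}(\Sigma) = \mathbb{G}_m^d$, where $CH^i(\mathbb{G}_m^d)=0$ for $i>0$. This holds because $CH^*(\mathbb{A}^d)=\mathbb{Z}$ in degree $0$ and localization surjects onto the open torus.

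For the inductive step I work with Chow groups of cycles of fixed dimension, $CH_j$, since localization is cleanest there. For a closed $Z \subset X$ with open complement $U$, the sequence
\[
CH_j(Z) \to CH_j(X) \to CH_j(U) \to 0
\]
is exact. The claim $CH^i(X)=0$ for $i>0$ is equivalent to $CH_j(X)=0$ for $j<d$; because $X$ is smooth and irreducible of dimension $d$, this is the same as $CH_j$ being concentrated in top dimension. I peel off strata one at a time, ordered so that each step is closed-in-open. Each stratum $S=\mathbb{A}^k\times\mathcal{A}(\Sigma')$ has $CH_j(S)\cong CH_{j-k}(\mathcal{A}(\Sigma'))$ by homotopy invariance. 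By induction this vanishes unless $j = k+\dim \mathcal{A}(\Sigma') = \dim S$. So every stratum has Chow groups only in its top dimension, namely $\mathbb{Z}\cdot[S]$.

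This gives $CH_j(X)$ generated by the fundamental classes $[\overline{S}]$ of closures of strata of dimension $j$. The key point, and I expect the main obstacle, is to show that these classes vanish for proper strata. Dimension counting alone only shows that $CH_*(X)$ is spanned by classes of closures of strata; it does not show that those classes are zero. Note that the naive claim "$CH_j$ is concentrated in top degree for every stratified piece" is false for non-affine pieces, so we genuinely need $X$ to be an RFR cluster variety here.

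To kill these classes, my first attempt is to arrange the stratification so that each closed stratum closure is cut out as a principal divisor, or iterated such. For instance, take the closed locus $\{x_v=0\}$ of a sink/frozen-adjacent mutable cluster variable $x_v$, which is a regular function on $\mathcal{A}(\Sigma)$. Its divisor is then rationally equivalent to zero, and the deeper strata live inside this divisor, where I can apply the induction to the divisor itself. Concretely, the stratification should be $X = \{x_v\neq 0\}\sqcup\{x_v=0\}$. The locus $\{x_v=0\}$ is $\mathbb{A}^{k}\times\mathcal{A}(\Sigma')$ or a cluster variety of a frozen seed. The locus $\{x_v\ne 0\}$ is a cluster variety with $v$ frozen, which has fewer mutable vertices. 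Then $[\{x_v=0\}]=\mathrm{div}(x_v)=0$. In addition, every lower-dimensional class supported on $\{x_v=0\}$ is either a multiple of its fundamental class (by induction, since that piece only has top-dimensional Chow) or lies in the image from the open part's closure. This gives $CH_j(X)=0$ for $j<d$. Cao's Picard vanishing is the $i=1$ case of this and can serve as a consistency check. Irreducibility gives $CH^0=\mathbb{Z}$.

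The Hodge consequences are formal. By Theorem \ref{cycle class iso}, since $X$ is mixed Tate, the cycle class map identifies $CH^i\otimes\mathbb{Q}$ with the lowest-weight part $H^{2i,(i,i)}$, so the latter vanishes for $i>0$. Curious Poincaré symmetry pairs $H^{2i,(i,i)}$ with $H^{d,(d,d)}$-type pieces, so $H^{d,(d,d)}\cong H^{0,(0,0)}=\mathbb{Q}$, and all other Hodge pieces of $H^d$ are matched to vanishing groups.
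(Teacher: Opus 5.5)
Your proposal, once made concrete, is the paper's argument. You induct on the number of mutable vertices. You mutate so that $\Sigma$ is sink-recurrent at a sink $s$; this is the choice Propositions \ref{still rfr} and \ref{vanishing locus is linear} need, not a vaguer ``sink/frozen-adjacent'' vertex. You split
$\mathcal{A}(\Sigma)=D(x_s)\sqcup V(x_s)\cong\mathcal{A}(\Sigma^{\backslash s})\sqcup(\mathbb{A}^1\times\mathcal{A}(\Sigma^{/s}))$. Note that the open piece is $\mathcal{A}(\Sigma^{\backslash s})$, not a torus, and the clause about ``the image from the open part's closure'' is superfluous. Excision and homotopy invariance then give $CH_j(\mathcal{A}(\Sigma))=0$ for $j<d-1$, with $CH_{d-1}$ generated by $[V(x_s)]$.

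The one genuine difference is how this last class is killed. The paper cites the class-group vanishing of \cite{cao2024valuation} for primitive full rank cluster varieties. You instead use $[V(x_s)]=\mathrm{div}(x_s)=0$. Your route is self-contained and in effect reproves $\mathrm{Pic}(\mathcal{A}(\Sigma))=0$ for RFR sink-recurrent seeds rather than importing it.

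However, it needs two facts you did not state: $V(x_s)$ is irreducible, and $x_s$ vanishes to order exactly one along it. Without the second you only get $m[V(x_s)]=0$. That suffices rationally, and hence for the Hodge-theoretic consequences, but not for the integral vanishing claimed.

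Both facts hold. On the dense open $V(x_s)\cap\{x'_s\neq 0\}$ one has $x_s=(\chi'_s+1)/x'_s$. By RFR, $\mathbb{Z}\chi_s\oplus\mathrm{im}(\tilde{E}^{-s})$ is saturated, as used in the proof of Proposition \ref{still rfr}. So there is a grading of $\mathcal{A}(\Sigma^{-s})$, vanishing on $\mathrm{im}(\tilde{E}^{-s})$, in which $\chi'_s$ has degree one. The resulting $\mathbb{G}_m$-action gives $\mathcal{A}(\Sigma^{-s})\cong\mathbb{G}_m\times\{\chi'_s=-1\}$. Hence $\chi'_s+1$ cuts out a reduced, irreducible hypersurface, and $\mathrm{ord}_{V(x_s)}(x_s)=1$.

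The paper's citation avoids this multiplicity bookkeeping, at the cost of relying on a deeper external theorem.
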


Let us explain the clauses of this theorem in order. We prove the Chow vanishing by constructing a stratification of any RFR sink-recurrent cluster variety $\mathcal{A}(\Sigma)$ into pieces of the form \begin{equation}
    \mathcal{A}(\Sigma) \cong \mathcal{A}(\Sigma^{\backslash s}) \sqcup (\mathbb{A}^1 \times \mathcal{A}(\Sigma^{/s})),
\end{equation}
where $\Sigma^{\backslash s}$ and $\Sigma^{/s}$ are certain seeds which have fewer mutable vertices than $\Sigma$ (see Subsection \ref{subsection 4.2}). A similar decomposition was considered in Section 3 of \cite{galashin2026braid} for the sake of \textit{constructing} really full rank sink-recurrent cluster structures on varieties. The content of our work is a converse: that any RFR sink-recurrent cluster variety admits such a decomposition (see Proposition \ref{vanishing locus is linear}), and that the pieces of this decomposition are still (affine space times) RFR sink-recurrent cluster varieties (see Proposition \ref{still rfr}).\\

Now let us unravel the second clause and properly explain Theorem \ref{cycle class iso}. Curious Poincar\'e symmetry implies that \begin{equation}
    H^{2i, (i, i)}(\mathcal{A}(\Sigma)) \cong H^{d, (d-i, d-i)}(\mathcal{A}(\Sigma)),
\end{equation}
so this statement would follow from the first if the cycle class maps from $CH^i(\mathcal{A}(\Sigma)) \otimes \mathbb{Q}$ to $H^{2i, (i, i)}(\mathcal{A}(\Sigma))$ were isomorphisms. To establish this we employ the formalism of mixed Tate (Voevodsky) motives, recalled in Section \ref{section 2}. A variety $X$ having a mixed Tate motive implies, but is strictly stronger than, $X$ having mixed Tate cohomology.\\

Our main motivic theorem provides exactly the kind of tool we seek: we prove that the rational cycle class map from $CH_i(X) \otimes \mathbb{Q}$ to bottom weight Borel-Moore homology is an isomorphism for mixed Tate $X$ over a number field.

\begin{thm}[Theorem \ref{cycle class iso}]
For any inclusion $\sigma: k \hookrightarrow \mathbb{C}$ of a number field into the complex numbers and mixed Tate variety $X/k$, the cycle class map $cl_i: A_i(X) \rightarrow gr_{-2i}H_{2i}^{BM}(X(\mathbb{C});\mathbb{Q})$ induced by the corresponding Hodge realization is an isomorphism.
\end{thm}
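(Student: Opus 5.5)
The strategy is to express both sides of the cycle class map as Hom groups in triangulated categories of motives. Then I would reduce to pure Tate motives by dévissage, and finally use the known motivic cohomology of number fields, where Borel's theorem controls the relevant groups.

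\textbf{Both sides as Hom groups.} Write $A_i(X)=CH_i(X)\otimes\mathbb{Q}$. By Voevodsky's comparison, $A_i(X)\cong \Hom_{DM(k,\mathbb{Q})}(\mathbb{Q}(i)[2i], M^c(X))$, where $M^c$ is the motive with compact supports. On the Hodge side, $H^{BM}_{2i}(X(\mathbb{C});\mathbb{Q})$ is the corresponding Hom in the derived category of mixed Hodge structures, up to twist. Mixed Hodge structures have $\mathrm{Ext}^{\geq 2}=0$, so this Hom splits into a $\Hom$-part and an $\mathrm{Ext}^1$-part. Taking $\Hom_{MHS}(\mathbb{Q}(i),-)$ then extracts exactly $gr_{-2i}$, the lowest weight part, whenever the homology has weights $\geq -2i$ in degree $2i$; this weight bound holds for Borel--Moore homology. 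So $cl_i$ is identified with the map
\[
\Hom_{DMT}(\mathbb{Q}(i)[2i],M^c(X))\to \Hom_{MHS}\big(\mathbb{Q}(i),H_{2i}^{BM}\big)
\]
induced by the Hodge realization restricted to mixed Tate motives.

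\textbf{Weight structure.} Over a number field, the Beilinson--Soulé vanishing holds, so $DMT(k,\mathbb{Q})$ carries a motivic t-structure with heart the abelian category $MTM(k)$. Its Ext groups are
\[
\mathrm{Ext}^1(\mathbb{Q}(0),\mathbb{Q}(n))=K_{2n-1}(k)_\mathbb{Q}\ (n\geq 1), \qquad \mathrm{Ext}^{\geq 2}=0.
\]
The t-structure is compatible with the weight filtration, and Hodge realization is exact, commutes with $W_\bullet$, and sends $\mathbb{Q}(n)$ to $\mathbb{Q}(n)$. I would then write $M^c(X)$ via its motivic cohomology objects $H^j$ and their weight filtrations. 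Computing $\Hom(\mathbb{Q}(i)[2i],M^c(X))$ through the resulting spectral sequence, only two kinds of terms contribute:
\begin{enumerate}[(a)]
\item $\Hom_{MTM}(\mathbb{Q}(i),H_{2i})$, which factors through the weight-$(-2i)$ graded piece;
\item $\mathrm{Ext}^1_{MTM}(\mathbb{Q}(i),H_{2i+1})$, which involves only graded pieces $\mathbb{Q}(j)$ with $j>i$.
\end{enumerate}
Terms with $j\le i$ contribute nothing, since Hom and Ext between pure Tate objects of the wrong weights vanish. The key input for (b) is an effectivity/weight bound: $H_{2i+1}^{BM}$ has weights $\geq -2i-1$, hence only $\mathbb{Q}(j)$ with $j\le i$, by half-integrality and parity of weights. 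So the $\mathrm{Ext}^1$ term would require $\mathrm{Ext}^1(\mathbb{Q}(i),\mathbb{Q}(j))$ with $j\le i$, which vanishes ($K_{\le 0}$ in negative-type degrees; and $K_{-1}=0$ when $j=i$). This leaves $A_i(X)\cong \Hom_{MTM}(\mathbb{Q}(i),gr^W_{-2i}H_{2i})$, and the latter is a direct sum of copies of $\mathbb{Q}$ indexed by the pure Tate multiplicity.

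\textbf{Conclusion via full faithfulness.} Pure weight-$w$ Tate motives form a semisimple category equivalent to graded $\mathbb{Q}$-vector spaces, and Hodge realization is fully faithful on them. Hence $\Hom_{MTM}(\mathbb{Q}(i),gr_{-2i}H_{2i})\cong \Hom_{MHS}(\mathbb{Q}(i),gr_{-2i}H^{BM}_{2i})=gr_{-2i}H^{BM}_{2i}$. Naturality of the cycle class, coming from compatibility of realization with the Voevodsky comparison, then identifies this composite with $cl_i$.

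\textbf{Main obstacle.} The hardest step is justifying that the motivic weight filtration on the realization $H_j$ of $M^c(X)$ matches Deligne's weights, and that the weight bounds used above hold motivically, i.e. that $H_{2i}$ has motivic weights $\geq -2i$. I would first try to prove these bounds by localization triangles and induction on dimension, reducing to smooth projective varieties via de Jong alterations; this is legitimate because we work rationally. I would also rely on Levine's and Wildeshaus's results on weights in $DMT$ over number fields.
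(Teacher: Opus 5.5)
Your proposal is correct and follows essentially the same route as the paper. Use heredity of $MT(k)$ to split $\Hom(\mathbb{Q}(i)[2i],M^c(X))$ into $\Hom(\mathbb{Q}(i),H^{M,BM}_{2i}(X))$ and $\mathrm{Ext}^1(\mathbb{Q}(i),H^{M,BM}_{2i+1}(X))$; kill the latter because the weights of $H^{M,BM}_{2i+1}(X)$ lie in $[-2i-1,0]$ and are even, so only $\mathbb{Q}(j)$ with $j\le i$ occur; note the former lands in $W_{-2i}=gr_{-2i}$; and conclude since $R^H_{k,\sigma}$ is exact, respects weights, and is the identity on $\mathrm{End}(\mathbb{Q}(i))$.

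The weight bound you single out as the main obstacle is simply quoted as background in the paper, and it needs no alterations. $R^H_{k,\sigma}$ is exact and sends $gr^W_{-2j}$ of a mixed Tate motive (a sum of copies of $\mathbb{Q}(j)$) to the same number of copies of $\mathbb{Q}(j)$, so the motivic weights of $H^{M,BM}_{n}(X)$ coincide with Deligne's weights on $H^{BM}_{n}(X(\mathbb{C});\mathbb{Q})$, which lie in $[-n,0]$.
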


We then show in Corollary \ref{sink-recurrent MT} that any RFR sink-recurrent cluster variety is mixed Tate (and in fact linear in the sense of \cite{totaro2014chow}, see Theorem \ref{RFR LINEAR THM}), completing the proof of the second clause of Theorem \ref{chow/cohom theorem}. Using this we show in Theorem \ref{sink-recurrent split} that such varieties have motives which split over $\mathbb{Q}$ into direct sums of Tate twists. In totality this provides a motivic lift of the main results of \cite{lam2022cohomology}, which show that RFR, Louise, and skew-symmetric cluster varieties have mixed Tate cohomology and $\mathbb{Q}$-split weight filtrations. It also weakens their hypotheses, dropping skew-symmetry and replacing Louiseness with sink-recurrence (see Proposition \ref{sink-recurrent CHL} for a cohomological version of this).\\

In the process we compare mixed Tateness to other notions which are currently prominent in the broader motivic literature, namely \cite{totaro2014chow}'s more restrictive notion of linearity (see Subsection \ref{subsection 3.1}) and \cite{MR5083265}'s Chow-K\"unneth generation property (CKgP, see Subsection \ref{subsection 3.2}). We deduce 1) that all of Totaro's computational formulae for linear schemes also apply to mixed Tate ones, and 2) that a smooth and proper variety is mixed Tate if and only if it has the CKgP (see Theorem \ref{mixed Tate vs. ckgp}). We contend that Section 3 exhibits mixed Tateness as the right ``Tate-type" notion to consider for proper singular varieties, and we expect its contents to be of independent interest.\\

We end with a number of applications. In Subsection \ref{subsection 6.1} we use the previous results to compute certain triply-graded Khovanov-Rozansky homology groups of positive (type A) braids $\beta \in Br_n^+$. Using \cite{MR5110495}'s identification of the groups $HHH^{a = 0, t, q}(\beta)$ with mixed Hodge pieces of torus equivariant compactly supported cohomology groups of the braid variety $X(\beta \Delta)$ we learn the following.

\begin{thm}[Theorem \ref{HHH vanishing}]
    For all $q = -t$ and any positive braid $\beta$, the triply-graded Khovanov-Rozansky homology group $HHH^{0, t, -t}(\beta)$ satisfies \begin{equation}
        HHH^{0, t, -t}(\beta) = \begin{cases}  0 & t \neq \ell(\beta) \\
    \mathbb{Q} & t = \ell(\beta) .
   \end{cases} \end{equation}
\end{thm}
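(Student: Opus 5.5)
Write $X=X(\beta\Delta)$ and $T$ for the torus acting on it. The plan is to transport the statement to the braid variety via \cite{MR5110495}. That identification expresses $HHH^{a=0,t,q}(\beta)$ as a mixed Hodge graded piece of $H^{*}_{T,c}(X)$, the $T$-equivariant compactly supported cohomology. The conditions $q=-t$ (after the paper's normalisation of gradings) and $t$ should correspond to a piece of the form $\mathrm{gr}^W_{2k}H^{j}_{T,c}(X)$ with $j$ and $k$ tied so that $j=2k$ up to a shift. So the first step is to write down the dictionary carefully. I expect the family $q=-t$ to pick out exactly the pure-diagonal classes $H^{2i,(i,i)}$ after Poincaré duality. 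Dually, these are the lowest-weight pieces $\mathrm{gr}_{-2i}H^{BM}_{2i}$, which are exactly what Theorem \ref{cycle class iso} controls.

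Next, reduce from equivariant to non-equivariant cohomology. The braid variety is an RFR sink-recurrent cluster variety \cite{galashin2026braid}, and the $T$-action is free up to a finite stabiliser, or at least it factors through the cluster torus action. Hence, either $H^*_T(X)\cong H^*(X/T)$ with $X/T$ again a cluster-type variety (RFR sink-recurrent, obtained by freezing and quotienting), or $H^*_T(X)\cong H^*(X)\otimes H^*(BT')$ by a splitting argument. In the second case I would use the $\mathbb{Q}$-splitting of the motive from Theorem \ref{sink-recurrent split}, so that the equivariant spectral sequence degenerates. Then apply Theorem \ref{chow/cohom theorem} to the relevant variety $Y$, either $X$ or $X/T$. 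It gives $H^{2i,(i,i)}(Y)=0$ for $i>0$, $H^{0,(0,0)}(Y)=\mathbb{Q}$, and, via curious Poincaré symmetry, $H^{d,(d,d)}(Y)\cong\mathbb{Q}$ in top degree. Translating through Poincaré duality $H^{j}_c\cong H^{2d-j}(-d)^\vee$ to compactly supported cohomology, only one diagonal class survives. Finally, match its grading with $t=\ell(\beta)$ using $\dim X(\beta\Delta)=\ell(\beta)$, up to the torus rank shift built into the normalisation.

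The main obstacle is the equivariant step. $H^*(BT)$ contributes classes of type $(1,1)$ in degree 2. These could a priori produce nonzero diagonal classes $u^k\cdot H^{0,(0,0)}$ for many $t$, which would contradict the vanishing. So I must check that the normalisation in \cite{MR5110495} either uses $X/T$, or that the grading $q=-t$ excludes these polynomial generators. I would first try the quotient description, verifying that $T$ acts freely on $X(\beta\Delta)$ and that $X/T$ remains RFR sink-recurrent. The latter should follow as in Proposition \ref{still rfr}, since quotienting by the frozen torus corresponds to passing to a seed with fewer frozen variables while keeping really full rank. After that, the remaining work is bookkeeping of the grading conventions to land on exactly $t=\ell(\beta)$.
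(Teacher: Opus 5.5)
Your reduction from equivariant to ordinary cohomology has a genuine gap: both routes you propose fail.

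\emph{Route (a), the quotient.} The full torus $T=(\mathbb{C}^*)^{n-1}$ does not act freely on $X(\beta\Delta)$ in general. \cite{MR5110495} only gives a free action of the subtorus $T_{c(\beta)}$, of dimension $n-c(\beta)$. For instance, for the two-component closure $\beta=\sigma_1^2$, the point $z=0\in X(\sigma_1^3)$ is $T$-fixed. So $H^*_T(X)\cong H^*(X/T)$ is unavailable. Nothing in the paper puts a cluster structure on a torus quotient either; Proposition \ref{still rfr} concerns deleting a vertex, not dividing by a torus.

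\emph{Route (b), degeneration.} The isomorphism $H^*_T(X)\cong H^*(X)\otimes H^*(BT)$ is false in general. Splitness of the weight filtration (Theorem \ref{sink-recurrent split}) is a statement about extensions of Hodge structures and implies nothing about equivariant formality. In Example \ref{ex:trefoil}, $X(\sigma_1^4)$ is split, yet $T$ acts freely, so $H^*_T(X(\sigma_1^4))\cong H^*(Y)$ is finite dimensional.

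\emph{The missing idea.} Neither route is needed once the grading dictionary is read correctly. Set $q=-t$ in $H^{t+q+d,(\frac{q+d}{2},\frac{q+d}{2})}_{c,T}(X(\beta\Delta))\cong HHH^{0,t,q}(\beta)$. This gives $H^{d,(p,p)}_{c,T}(X)$ with $t=d-2p$. That is a single cohomological degree $d$ with varying weight (the ``Weight axis'', Poincar\'e dual to top-degree $H^d$), not the pure diagonal $H^{2i,(i,i)}$.

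A degree-two class of $H^*(BT)$ shifts $q$ by $2$ at fixed $t$. On the line $q=-t$ it could therefore only act on classes coming from $H^{d-2k}_c(X)$. These vanish by Artin vanishing, since $X$ is smooth affine of dimension $d$. So your worry about classes $u^k\cdot H^{0,(0,0)}$ is an artifact of the misidentified dictionary.

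\emph{How the paper makes this precise.} It approximates the Borel construction by $X\times_T(\mathbb{C}^N\setminus 0)^{n-1}\to(\mathbb{P}^{N-1})^{n-1}$. The compactly supported Leray spectral sequence of this map is one of mixed Hodge structures \cite{MR2178703}, with $E_2^{p,q}=H^p((\mathbb{P}^{N-1})^{n-1})\otimes H^q_c(X)$. This vanishes for $q<d$. Hence $E_2^{0,d}=H^d_c(X)$ is the only term of total degree $d$, and no differential enters or leaves it. This gives $H^d_{c,T}(X)\cong H^d_c(X)$ as mixed Hodge structures, with no freeness, quotient, or degeneration needed. Theorem \ref{chow/cohom theorem} and Poincar\'e duality then give $H^d_c(X)=H^{d,(0,0)}_c(X)\cong\mathbb{Q}$. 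So the only nonzero group is at $p=0$, i.e.\ $t=\ell(\beta)$, with no torus-rank shift.
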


This generalizes a portion of the main theorems of \cite{li2025last}, which cover the case where $t = -q$ is either $\ell(\beta)$ or $\ell(\beta) -2$, to the entire $t = -q$ diagonal. We also learn that the Chow and cohomology groups of an augmentation variety $Aug(\beta, \mathfrak{t}_c)$ vanish in the same range as those of the corresponding braid variety (see Theorem \ref{augmentation vanishing}), and in Corollary \ref{forest vanishing} we show further that an analogue of Theorem \ref{HHH vanishing} also holds in the $t = -q + 1$ case if $X(\beta \Delta)$ is $\mathcal{A}(Q)$ for an RFR forest quiver $Q$. This complements the work in \cite{schwartz2026homfly}. In Subsection \ref{subsection 6.2} we deduce that the Chow groups of a (projected) Richardson variety or brick variety are generated by the fundamental classes of their (Richardson, positroid, or braid) strata (see Corollary \ref{generation}), and we raise the question of whether there is an interesting combinatorial choice of subset of strata which generates.

\subsection{Outline and Conventions}
In Section \ref{section 2} we recall the formalism of Voevodsky motives in sufficient detail to apply it to our setting. In Section \ref{section 3} we use the properties of mixed Tate motives deduced therein to study general comparisons between mixed Tateness, linearity, and the CKgP. In Section \ref{section 4} we turn to cluster theory, exhibiting a stratification of RFR sink-recurrent cluster varieties into (affine spaces times) smaller ones. We then use this stratification to establish mixed Tateness and splitness of said varieties, as well as mixed Tateness of compactifications thereof. In Section \ref{section 5} we deduce homological and Chow theoretic consequences, establishing our vanishing theorem. In Section \ref{section 6} we end by computing knot homology groups and deducing generation results for compactifications of RFR sink-recurrent cluster varieties.\\

Now we fix conventions.  $CH_i(X)$ denotes the $i$-th integral homological Chow group of a scheme $X$, and $CH^i(X)$ its cohomological analogue. For most of the paper we will use homological Chow indexing, matching the conventions of \cite{totaro2014chow}. All schemes are separated and finite type over a base field $k$, and a variety is a reduced and irreducible scheme. Given a scheme $X$, we will write $H^0(X)$ to refer to its global sections. Also, given $x \in H^0(X)$ we will write $D(x)$ for the locus on which $x$ is invertible and $V(x)$ for its vanishing locus.\\ 

Unless otherwise indicated cohomology is always taken with rational coefficients. Every variety we consider will have a mixed Hodge structure with weight filtration defined over $\mathbb{Q}$. Therefore, given a complex variety $X$, we will write $H^{i, (p, q)}(X)$ for the $(p, q)$ part of the $i$-th (rational) cohomology of $X$, i.e. for the $(p, q)$ part of $gr_{p+q}^W H^i(X)$. We will write $h^{k,(p, q)}(X)$ for the dimension of $H^{k, (p, q)}(X)$, and at times even write $h^k(X)$ for the dimension of $H^k(X)$. We refer to the cohomology groups $H^{2i, (i, i)}(X)$ as the \textit{pure cohomology} of $X$.

\subsection{AI Declaration}
An LLM, which we decline to name for reasons akin to those given in Footnote 3 of \cite{kelly2026some}, was used for clarifying suggestions on both proofs and exposition as this project neared completion. All ideas and arguments are due to the author, and she takes responsibility for any remaining errors.

\subsection{Acknowledgments} We thank Gabriel Beiner, Eugene Gorsky, Soyeon Kim, Thomas Lam, David Nadler, David Speyer, and Peng Zhou for helpful discussions. We are also grateful to Charlotte Kiesling and Sophie McCormick for their close reading of an earlier draft of this work. Particular thanks are due to Roman Krutowski, whose questions led to the development of this project, and to Sonia Choy for her support, interest, and cluster theoretic wisdom. The author's research is partially supported by the National Science Foundation Graduate Research Fellowship
Program under Grant No. DGE 2146752 and by a Simons Dissertation Fellowship.

\section{Background on Voevodsky Motives}\label{section 2}

In this section we fix a field $k$ (which will be a number field unless otherwise stated) and study the triangulated category of rational Voevodsky motives $DM(k) := DM(k; \mathbb{Q})$ over $k$. We refer the reader to \cite{dupont2024introduction} for a wonderful and much deeper survey on $DM(k)$. This section contains no original content, and we learned everything here from either \cite{deligne2005groupes}, \cite{dupont2024introduction}, or \cite{totaro2016motive}.\\

Every separated scheme $X$ of finite type over $k$ determines an object $M^c(X)$ in $DM(k)$ called the \textit{compactly supported motive} of $X$. This assignment is covariantly functorial in proper morphisms, and any closed immersion $Z \hookrightarrow X$ of separated finite type schemes over $k$ induces a triangle in $DM(k)$ of the form \begin{equation}
    M^c(Z) \rightarrow M^c(X) \rightarrow M^c(X \setminus Z) \rightarrow M^c(Z)[1],
\end{equation} where $[1]$ is the shift functor in $DM(k)$. We refer to this triangle as the \textit{excision sequence} resulting from $Z \hookrightarrow X$.\\

A salient feature of $DM(k)$ is that it contains \textit{Tate twists} $\mathbb{Q}(i)$, which can be defined by \begin{equation}
    \mathbb{Q}(i) := M^c(\mathbb{A}^i)[-2i]
\end{equation} \noindent for $i \geq 0$, and $\mathbb{Q}(-i) = \underline{\Hom}_{DM(k)}(\mathbb{Q}(i), \mathbb{Q})$ in general. To make the latter sensible we note that $DM(k)$ is in fact \textit{tensor} triangulated and has an internal Hom, structures which we denote by $\otimes$ and $\underline{\Hom}$ respectively. A key property of $\otimes$ is that $\mathbb{Q}$ is the monoidal unit and \begin{equation}
    \mathbb{Q}(i) \otimes \mathbb{Q}(j) \cong \mathbb{Q}(i + j).
\end{equation}
Much of $DM(k)$'s use lies in studying rational Chow groups of varieties. Indeed, writing $A_i(X) := CH_i(X) \otimes \mathbb{Q}$, we have \begin{equation}
    A_i(X) \cong \Hom(\mathbb{Q}(i)[2i], M^c(X)).
\end{equation}
For a number field $k$ and $n \geq 1$, Levine proved (see  \cite{levine1998mixed} and Section 6.3 of \cite{dupont2024introduction}) that \begin{equation}\label{MT hereditary}
    DM^i(k)(\mathbb{Q}(-n), \mathbb{Q}) =  \begin{cases} 
      K_{2n-1}(k) \otimes \mathbb{Q} & i = 1 \\
     0 & i \neq 1.
   \end{cases}
\end{equation}
Further, $DM(k)(\mathbb{Q}(n), \mathbb{Q})$ vanishes for $n \geq 1$, and one has \begin{equation}\label{MT}
       \Hom(\mathbb{Q}(i), \mathbb{Q}(i)) = \mathbb{Q}. 
\end{equation}
   That Tate twists have no negative Exts between them implies that one can define a $t$-structure on $DMT(k)$, the thick triangulated closure of the set of Tate motives $\mathbb{Q}(i)$ inside of $DM(k)$.
   
   \begin{defn}
       The abelian category $MT(k)$ of (rational) \textit{mixed Tate motives} over a number field $k$ is the heart of the Levine $t$-structure on $DMT(k)$, i.e. the closure of the Tate twists in $DMT(k)$ under iterated extensions. $DMT(k)$ is its derived category, the \textit{triangulated category of mixed Tate motives} over $k$. A motive $M$ will be called mixed Tate if $M \in DMT(k)$.
   \end{defn}

   \begin{rem}
   We will use in passing later that there is in fact a category $DM(k)$ of Voevodsky motives over any field $k$. Further, $DMT(k)$ is \textit{also} sensible over any field $k$. Many of the properties we allude to here which hold in case when $k$ is a number field are conjectural in this setting, however.
   \end{rem}

$MT(k)$ satisfies an assortment of pleasant properties. First, every object $M$ of $MT(k)$ admits a finite increasing weight filtration $W$ indexed by \textit{even} integers, such that the weight filtration on $\mathbb{Q}(-n)$ is supported only in degree $2n$ and (more generally) that $gr_{2n}^W M$ is a direct sum of $\mathbb{Q}(-n)$'s. Morphisms in $MT(k)$ respect the weight filtration, and if $X$ is a variety over $k$ then the weight filtration on the $i$-th cohomology object of $M^c(X)$ with respect to Levine's $t$-structure is supported only in degrees in the interval $[-i, 0]$. This $i$-th cohomology object is denoted by $H_i^{M,BM}(X)$ and referred to as the $i$-th \textit{compactly supported motivic homology} of $X$.

\begin{defn}\label{Mixed Tate}
    A variety $X$ over a number field $k$ is called \textit{mixed Tate} if $M^c(X) \in DMT(k)$. A mixed Tate motive $M$ is called \textit{split} if it decomposes as a direct sum of shifts of Tate twists $\mathbb{Q}(j)$, and a mixed Tate variety $X$ is said to be split if each cohomology object $H_i^{M, BM}(X)$ is.
\end{defn}

Given any field extension $k \subset k'$ there is an exact, symmetric monoidal, and weight filtration preserving base change functor from $DM(k)$ to $DM(k')$ sending $\mathbb{Q}(i)$ to $\mathbb{Q}(i)$ and $M^c(X_k)$ to  $M^c(X_{k'})$. It therefore sends $DMT(k)$ to $DMT(k')$ and split motives to split motives.\\

For any embedding $\sigma: k \hookrightarrow \mathbb{C}$, there is an exact functor $R^H_{k, \sigma}$ from $MT(k)$ to $MHS_k$, the (hereditary) abelian category of rational mixed Hodge structures with Hodge filtrations defined over $k$, which sends $\mathbb{Q}(-n)$ to $\mathbb{Q}(-n)$ and induces the identity map from $End_{MT(k)}(\mathbb{Q}(i))$ to $End_{MHS_k}(\mathbb{Q}(i)) \cong \mathbb{Q}$, and which sends $H_i^{M, BM}(X)$ to $H_i^{BM}(X(\mathbb{C}); \mathbb{Q})$ equipped with its weight and Hodge filtrations. Therefore (using heredity of both categories) one sees that the canonical extension of $R^H_{k, \sigma}$ to derived categories on each side sends $M^c(X)$ to $H_*^{BM}(X(\mathbb{C}); \mathbb{Q})$. Further, $R^H_{k,\sigma}$ preserves weight filtrations. This leads us to the following useful

\begin{defn}
    A variety over a number field $k$ equipped with an embedding $\sigma: k \hookrightarrow \mathbb{C}$ is said to have \textit{mixed Tate cohomology} if $H_{i, (p, q)}^{BM}(X(\mathbb{C})) = 0$ (equivalently when $X$ is smooth, if $H^{i, (p, q)}(X(\mathbb{C})) = 0$) for all $p \neq q$. This condition is independent of $\sigma$.
\end{defn}

Therefore we see that if $X$ is mixed Tate then the weight filtration on $H_i^{M, BM}(X)$ is supported only in \textit{even} degrees between $-i$ and $0$. In this case $X$ also has mixed Tate cohomology.\\

If $k = \mathbb{Q}$, the realization functor $R^H := R^H_{\mathbb{Q}, \sigma}$ from $MT(\mathbb{Q})$ to  $MHS_\mathbb{Q}$ corresponding to the unique embedding $\sigma: \mathbb{Q} \hookrightarrow \mathbb{C}$ is in fact faithful. It also induces injections on $\text{Ext}^1$'s. No matter the choice of number field $k$, however, the morphism \begin{equation}
    R_{k, \sigma}^H: \Hom(\mathbb{Q}(j)[i], M^c(X)) \rightarrow \Hom(\mathbb{Q}(j)[i], H_*^{BM}(X(\mathbb{C}); \mathbb{Q})) 
\end{equation} induced by any $R^H_{k, \sigma}$ identifies with the cycle class map from the higher Chow group $H_i^{M, BM}(X; \mathbb{Q}(j))$ to Beilinson's absolute Hodge cohomology. Whenever the source and target are identifiable under more elementary names, this cycle class map agrees with the expected one there.

\begin{lem}\label{2-of-3 mixed tate}
   Assume given a closed immersion of varieties $Z \hookrightarrow X$ over $k$ with complement $U := X \setminus Z$. Then if any two of $Z, X$ and $U$ are mixed Tate, the third also is.
\end{lem}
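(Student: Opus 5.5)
The plan is to use the excision triangle
\begin{equation*}
    M^c(Z) \rightarrow M^c(X) \rightarrow M^c(U) \rightarrow M^c(Z)[1]
\end{equation*}
together with the fact that $DMT(k)$ is, by definition, a \emph{thick triangulated} subcategory of $DM(k)$. In particular it is a full triangulated subcategory, so it is closed under shifts in both directions and under cones. The argument is then the standard two-out-of-three property for distinguished triangles in a triangulated subcategory, so I will simply check the three cases.

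First suppose $Z$ and $X$ are mixed Tate. Then $M^c(U)$ is isomorphic to the cone of the morphism $M^c(Z) \to M^c(X)$. This follows from the axioms of a triangulated category: any distinguished triangle extending a given morphism has third vertex isomorphic to the cone. Since $DMT(k)$ is triangulated and full, this cone lies in $DMT(k)$. It is also closed under isomorphism, being thick. Hence $M^c(U) \in DMT(k)$.

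Next suppose $X$ and $U$ are mixed Tate. Rotate the triangle backwards to obtain
\begin{equation*}
    M^c(U)[-1] \rightarrow M^c(Z) \rightarrow M^c(X) \rightarrow M^c(U).
\end{equation*}
This exhibits $M^c(Z)$ as the cone of a morphism $M^c(U)[-1] \to M^c(Z)$ between objects of $DMT(k)$, once we note that $M^c(U)[-1] \in DMT(k)$ by closure under shifts. So $M^c(Z)$ lies in $DMT(k)$.

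Finally suppose $Z$ and $U$ are mixed Tate. Rotate to
\begin{equation*}
    M^c(X) \rightarrow M^c(U) \rightarrow M^c(Z)[1] \rightarrow M^c(X)[1].
\end{equation*}
Then $M^c(X)[1]$ is the cone of $M^c(U) \to M^c(Z)[1]$, which is a morphism in $DMT(k)$. Applying the shift $[-1]$ then gives $M^c(X) \in DMT(k)$.

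There is no real obstacle here. The only point requiring care is confirming that ``thick triangulated closure'' includes closure under cones, shifts, and isomorphisms. I would note this explicitly: it holds by definition of triangulated subcategory, with isomorphism-closure coming from thickness. One should also remember that the 2-of-3 property uses the full triangle, not merely an exact sequence of cohomology objects. An alternative route through the Levine $t$-structure, using the long exact sequence of $H^{M,BM}_i$ and closure of $MT(k)$ under extensions, kernels and cokernels, would also work over a number field. The triangulated argument is cleaner and works over any field, which matters because the remark above says $DMT(k)$ is used over general $k$.
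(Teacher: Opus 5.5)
Your approach is the same as the paper's: use the excision triangle and the fact that $DMT(k)$ is a thick triangulated subcategory of $DM(k)$. The paper simply asserts the two-out-of-three conclusion, and you spell out the three cases.

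There is one slip, in your second case. In the rotated triangle $M^c(U)[-1] \to M^c(Z) \to M^c(X) \to M^c(U)$, the cone of the first map is $M^c(X)$, not $M^c(Z)$. That map is also not yet known to be a morphism in $DMT(k)$, since $M^c(Z)$ is the unknown. You need to rotate backwards once more, to
$M^c(X)[-1] \to M^c(U)[-1] \to M^c(Z) \to M^c(X)$. This exhibits $M^c(Z)$ as the cone of a morphism $M^c(X)[-1] \to M^c(U)[-1]$ between objects of $DMT(k)$. Equivalently, $M^c(Z)[1] \cong \mathrm{cone}\bigl(M^c(X) \to M^c(U)\bigr)$.

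The rotation you wrote down actually settles your third case directly, since it shows $M^c(X) \cong \mathrm{cone}\bigl(M^c(U)[-1] \to M^c(Z)\bigr)$. With this correction your argument is complete and agrees with the paper's.
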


\begin{proof}
    By the excision sequence, the compactly supported motive of any one of these spaces lives in a triangle in $DM(k)$ with (a shift of) the other two. Since $DMT(k)$ is a thick triangulated subcategory of $DM(k)$ the result follows.
\end{proof}

Standard arguments then show that a variety which is stratified by mixed Tate ones, or which is covered by mixed Tate ones (with mixed Tate overlap), is itself mixed Tate.

\section{Mixed Tateness vis-\`a-vis Linearity and Chow-K\"unneth Generation}\label{section 3}

In this section we say a few words about the relation between mixed Tateness and other conditions which are currently prominent in the study of motives.

\subsection{...as Linearity sans Stratification}\label{subsection 3.1}

This work began as an attempt to replicate the results of Totaro's paper \cite{totaro2014chow}, especially those that pertain to toric varieties, in the context of cluster varieties. The main subject of that work is linear schemes.

\begin{defn}
    A scheme over a field $k$ is \textit{linear} if it is contained in the smallest class of (separated, finite type) schemes over $k$ that 1) contains $\emptyset$ and $\mathbb{A}^n_k$ for all $n \geq 0$, and 2) is closed under the following: if $Z \hookrightarrow X$ is the inclusion of a closed subscheme and any two of $Z, X$ and $X \setminus Z$ are linear, then so is the third.
\end{defn}

The slogan of this subsection is that over a number field, the main theorems of \cite{totaro2014chow} remain true if one replaces ``linear variety" with ``variety which is mixed Tate in the sense of Definition \ref{Mixed Tate}". The upshot is that the latter class is strictly broader: for example, as Totaro explains in the same work, there exist Barlow surfaces which are mixed Tate but not linear.

\begin{prop}\label{linear implies MT}
   A linear scheme $X$ over any field $k$ is mixed Tate.
\end{prop}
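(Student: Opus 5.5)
The plan is to show that the class $\mathcal{C}$ of separated finite type $k$-schemes $X$ with $M^c(X) \in DMT(k)$ contains the generators in the definition of linearity and is closed under the two-out-of-three operation. Since the linear schemes form the smallest class with these properties, they will then lie in $\mathcal{C}$.

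For the generators, $M^c(\emptyset) = 0$ lies in $DMT(k)$ because $DMT(k)$ is a triangulated subcategory and so contains zero. For $\mathbb{A}^n_k$ with $n \geq 0$, the definition $\mathbb{Q}(n) := M^c(\mathbb{A}^n)[-2n]$ gives $M^c(\mathbb{A}^n) = \mathbb{Q}(n)[2n]$. This is a shift of a Tate twist, hence lies in $DMT(k)$.

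For the closure property, I would rerun the proof of Lemma \ref{2-of-3 mixed tate}, with the word ``variety'' replaced by ``separated finite type scheme''. Given a closed immersion $Z \hookrightarrow X$ with complement $U$, the excision triangle
\begin{equation*}
M^c(Z) \rightarrow M^c(X) \rightarrow M^c(U) \rightarrow M^c(Z)[1]
\end{equation*}
exists for arbitrary separated finite type schemes, not just varieties. Any one of the three motives is therefore isomorphic to a cone of a map between (shifts of) the other two. Since $DMT(k)$ is a thick triangulated subcategory of $DM(k)$, if two of the three motives lie in $DMT(k)$, so does the third.

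The only point needing care is that the statement is over an arbitrary field $k$, whereas Section \ref{section 2} mostly assumes $k$ is a number field. As the Remark there notes, $DM(k)$, compactly supported motives, the excision triangles and $DMT(k)$ (the thick triangulated closure of the $\mathbb{Q}(i)$) all make sense over any field. Only the $t$-structure and the heart $MT(k)$ are special to number fields. The argument above uses only the triangulated structure, so it goes through unchanged, and I expect no real obstacle.
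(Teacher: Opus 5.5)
Your proposal is correct and is the paper's argument. The paper likewise observes that $M^c(\mathbb{A}^n)$ is a shift of a Tate twist, and that the two-out-of-three closure defining linear schemes is mirrored in $DMT(k)$ by the excision triangle (Lemma \ref{2-of-3 mixed tate}); your remarks on $\emptyset$, on schemes versus varieties, and on working over an arbitrary field only make explicit what the paper leaves tacit.
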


\begin{proof}
    By iterated application of the excision sequence for compactly supported motives. $M^c(\mathbb{A}^n)$ is mixed Tate because it's a shift of a Tate twist, and the result then follows from the fact that the two-out-of-three property satisfied by linear schemes is mirrored in $DMT(k)$ via Lemma \ref{2-of-3 mixed tate}.
\end{proof}

A key input to our study is the following.

\begin{prop}[Section 1 of \cite{totaro2016motive}]\label{strong CKgP}
    If a scheme $X$ is mixed Tate over a field $k$ (i.e. $M^c(X) \in DMT(k)$), then for any other scheme $Y$ the natural map \begin{equation}
        A_*(X) \otimes A_*(Y) \rightarrow A_*(X \times Y) \end{equation} \noindent is an isomorphism.
\end{prop}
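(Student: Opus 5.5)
The plan is to express both sides in terms of morphisms in $DM(k)$ and reduce, by dévissage, to the case where $M^c(X)$ is a single Tate twist. The key identification is that rational Chow groups are Hom groups into the compactly supported motive:
\begin{equation}
A_i(Y) \cong \Hom_{DM(k)}(\mathbb{Q}(i)[2i], M^c(Y))
\end{equation}
for any scheme $Y$. Write $A_i(Y, M)$ for $\Hom(\mathbb{Q}(i)[2i], M)$ when $M$ is an arbitrary motive. Fix $Y$. Then $M \mapsto A_*(Y,M) := \Hom(\mathbb{Q}(*)[2*], M \otimes M^c(Y))$ is a cohomological (homological) functor in $M$, and I will use the Künneth isomorphism $M^c(X \times Y) \cong M^c(X) \otimes M^c(Y)$. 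The natural exterior product map $A_*(X) \otimes A_*(Y) \to A_*(X \times Y)$ then extends to a natural transformation
\begin{equation}
\Phi_M : \bigoplus_{a+b=*} \Hom(\mathbb{Q}(a)[2a], M) \otimes A_b(Y) \to \Hom(\mathbb{Q}(*)[2*], M \otimes M^c(Y)).
\end{equation}
The aim is to show that $\Phi_M$ is an isomorphism for all $M \in DMT(k)$, in a suitably generalized form.

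\textbf{Base case.} For $M = \mathbb{Q}(j)[n]$, the right-hand side is $\Hom(\mathbb{Q}(*)[2*], M^c(Y)(j)[n])$. Using cancellation, this becomes a higher Chow group of $Y$, namely a morphism from $\mathbb{Q}(*-j)[2*-n]$ to $M^c(Y)$. The left-hand side is $\Hom(\mathbb{Q}(a)[2a], \mathbb{Q}(j)[n]) \otimes A_b(Y)$. This is nonzero only when the Ext between Tate twists is nonzero, which happens in degree $0$ with $a=j$, $n=2j$, and also possibly in degree $1$ (the groups $K_{2m-1}$ from the display labelled MT hereditary).

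\textbf{Main obstacle.} The naive statement does not pass through triangles, because Chow groups are only one piece of a long exact sequence of higher Chow groups. I would therefore prove the stronger statement for all bidegrees at once. That is, I would show that the map
\begin{equation}
\bigoplus \Hom(\mathbb{Q}(a)[p], M) \otimes \Hom(\mathbb{Q}(b)[q], M^c(Y)) \to \Hom(\mathbb{Q}(a+b)[p+q], M \otimes M^c(Y))
\end{equation}
is an isomorphism. Even this stronger form fails for higher Chow groups of a point times $Y$, so I would instead follow Totaro's approach. Totaro's trick is to restrict to the ``Chow line'' $p = 2a$, $q = 2b$ and exploit the fact that on this line the relevant groups satisfy right exactness.

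\textbf{Induction.} Concretely, I would argue by induction on the length of a presentation of $M$ as built from Tate twists by cones. Given a triangle $M' \to M \to M'' \to$, with the statement known for $M'$ and $M''$, both sides of $\Phi$ form sequences that are exact in the middle. One checks that $A_*(-)$ on the Chow line is right exact, as for the localization sequence $A(Z) \to A(X) \to A(U) \to 0$. A four/five-lemma-type argument then gives surjectivity, and injectivity comes from the corresponding statement with the boundary term.

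The hardest step is making this dévissage rigorous. Motives in $DMT(k)$ are not literally built from Tate twists with right exactness preserved. I would handle this as in Totaro's paper by proving the stronger claim that $\Hom(\mathbb{Q}(*)[2*], M \otimes M^c(Y))$ is generated by products, together with the matching injectivity for the weight-bottom piece. The Tate twists themselves are the base case, since for them $\mathbb{Q}(j)\otimes M^c(Y)$ gives $A_{*-j}(Y)$ exactly.
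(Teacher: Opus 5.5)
\textbf{There is a genuine gap.} The statement you propose to carry through cones is false for general objects of $DMT(k)$, so no induction over an arbitrary presentation of $M^c(X)$ by Tate twists can work.

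Take $M=\mathbb{Q}[-1]$, $Y=\mathbb{G}_m$, and total degree $0$. The source of $\Phi_M$ is $\Hom(\mathbb{Q}(-1)[-2],\mathbb{Q}[-1])\otimes A_1(\mathbb{G}_m)=H^1(k,\mathbb{Q}(1))=k^\times\otimes\mathbb{Q}$. The target is $\Hom(\mathbb{Q}[1],M^c(\mathbb{G}_m))=CH_0(\mathbb{G}_m,1)_{\mathbb{Q}}=\mathcal{O}(\mathbb{G}_m)^\times\otimes\mathbb{Q}$, which also contains the class of the coordinate $t$. So your base case holds for $\mathbb{Q}(j)[2j]$, and trivially for $\mathbb{Q}(j)[n]$ with $n>2j$ where both sides vanish, but not for all shifted twists.

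The other two steps fail for the same reason.
\begin{itemize}
\item $\Hom(\mathbb{Q}(i)[2i],-)$ is right exact on a triangle $M'\to M\to M''$ only when $\Hom(\mathbb{Q}(i)[2i],M'[1])=0$. This is the vanishing of negative higher Chow groups when $M'=M^c(Z)$, but it is not a general feature of $DMT(k)$.
\item The five-lemma step for injectivity needs $\Phi_{M''[-1]}$ to be surjective. This fails as soon as $M''$ contains a $\mathbb{Q}(j)[2j]$, since then $M''[-1]$ contains $\mathbb{Q}(j)[2j-1]$, a twist of the counterexample above.
\end{itemize}
Your last paragraph concedes this and defers to ``Totaro's approach'' and an unspecified stronger claim, so the proof is not actually given. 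The paper itself only cites Totaro here, so there is no in-paper argument to fall back on.

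\textbf{What is missing} is the right class of objects and the right order of filtration; your phrase ``weight-bottom piece'' points the right way but is never developed. Work with motives of non-negative Chow (Bondarko) weight, i.e.\ those built by extensions and retracts from $\mathbb{Q}(j)[n]$ with $n\ge 2j$. This class has three properties you need:
\begin{itemize}
\item $M^c$ of any scheme lies in it; over a number field this is the statement from Section 2 that $H_i^{M,BM}(X)$ has weights in $[-i,0]$.
\item It is closed under tensor products.
\item $\Hom(\mathbb{Q}(i)[2i],R)=0$ whenever $R$ has weight $\ge 1$, in particular for $R\otimes M^c(Y)$.
\end{itemize}
For mixed Tate $M=M^c(X)$, choose a weight decomposition $L\to M\to R$ with $L$ a finite sum of $\mathbb{Q}(j)[2j]$'s and $R$ of weight $\ge 1$, and similarly $L'\to R[-1]\to R'$. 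Orthogonality then gives
\[
A_*(M)=\mathrm{coker}\bigl(A_*(L')\to A_*(L)\bigr),
\]
\[
A_*(M\otimes M^c(Y))=\mathrm{coker}\bigl(A_*(L')\otimes A_*(Y)\to A_*(L)\otimes A_*(Y)\bigr).
\]
The second identification holds because the composite $L'\to R[-1]\to L$ is a rational matrix between pure Tate motives, since $\Hom(\mathbb{Q}(j)[2j],\mathbb{Q}(j')[2j'])=\mathbb{Q}\,\delta_{jj'}$, and for such maps K\"unneth is tautological. Right exactness of $\otimes_{\mathbb{Q}}$ then gives bijectivity at once, with no separate injectivity argument.

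Equivalently, your five-lemma induction does work if you induct on the length of the weight range and put the weight-$0$ part as the subobject. Then both rows are right exact because $A_*(R)=0=A_*(R\otimes M^c(Y))$. Moreover $R[-1]$ has a strictly shorter weight range, so $\Phi_{R[-1]}$ is surjective by induction.
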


We won't define or use Fulton and MacPherson's operational Chow ring, but the following fact is now sufficiently easy to state and prove that we list it anyway.

\begin{prop}[Mixed Tate Analogue of Theorem 2 of \cite{totaro2014chow}] For any mixed Tate variety $X$ which is proper over a field $k$, Fulton-MacPherson's operational Chow ring $A_{op}^*(X)$ satisfies \begin{equation}
    A_{op}^i(X) \cong \Hom(A_i(X), \mathbb{Q}),
\end{equation}
the $\mathbb{Q}$-linear dual of $A_i(X)$.
\end{prop}

\begin{proof}
    This follows from Propositions 1 and 2 of \cite{totaro2014chow} and the previous proposition.
\end{proof}

Now we turn to theorems and properties regarding cycle class maps. From this point on, the assumption that $k$ is a number field is indispensable in this subsection.

\begin{thm}[Mixed Tate Analogue of Theorem 3 of \cite{totaro2014chow}]\label{cycle class iso}
    For any mixed Tate variety $X$ over a number field $k$ and for all $i$, \begin{equation}
        A_i(X) \cong \Hom(\mathbb{Q}(i)[2i], H_*^{BM}(X(\mathbb{C}))) \cong gr_{-2i}H_{2i}^{BM}(X(\mathbb{C}); \mathbb{Q})
    \end{equation} \noindent as $\mathbb{Q}$ vector spaces. Further, the cycle class map $cl_i := R^H_{k, \sigma}: A_i(X) \rightarrow gr_{-2i}H_{2i}^{BM}(X(\mathbb{C});\mathbb{Q})$ induced by any $R^H_{k, \sigma}$ witnesses this isomorphism.
\end{thm}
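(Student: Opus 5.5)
The proof will exploit the weight structure of $M^c(X)$ inside $DMT(k)$ together with Levine's computation of Exts between Tate twists. Since $X$ is mixed Tate, $M^c(X)$ lies in $DMT(k)$, the bounded derived category of the hereditary-in-the-relevant-range abelian category $MT(k)$. Because $\text{Ext}^i$ between Tate twists vanishes for $i\geq 2$ by (\ref{MT hereditary}), $MT(k)$ is hereditary, so $M^c(X)$ splits non-canonically as $\bigoplus_n H_n^{M,BM}(X)[n]$. Hence
\[
A_i(X)\cong \Hom(\mathbb{Q}(i)[2i],M^c(X)) \cong \bigoplus_n \text{Ext}^{n-2i}_{MT(k)}(\mathbb{Q}(i),H_n^{M,BM}(X)).
\]
Only $n=2i$ (Hom) and $n=2i+1$ ($\text{Ext}^1$) can contribute.

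For the $\text{Ext}^1$ term, I will filter $H_{2i+1}^{M,BM}(X)$ by weight. Its weights lie in the even integers between $-(2i+1)$ and $0$, so its graded pieces are sums of $\mathbb{Q}(j)$ with $-2j\geq -2i-1$, i.e. $j\leq i$. By Levine's computation, $\text{Ext}^1(\mathbb{Q}(i),\mathbb{Q}(j))=\text{Ext}^1(\mathbb{Q},\mathbb{Q}(j-i))$ is nonzero only when $j-i\geq 1$. This vanishes for all $j\leq i$, and a dévissage along the weight filtration (using that $\text{Ext}^2=0$) kills the whole $\text{Ext}^1$ term.

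For the Hom term, the weights of $H_{2i}^{M,BM}(X)$ lie in $[-2i,0]$, so the graded pieces are $\mathbb{Q}(j)$ with $j\leq i$. Morphisms respect the weight filtration, and $\Hom(\mathbb{Q}(i),\mathbb{Q}(j))=0$ for $j\neq i$ (it is $\text{Ext}^0$, which is nonzero only when $j=i$). It follows that $\Hom(\mathbb{Q}(i),H_{2i}^{M,BM}(X))=\Hom(\mathbb{Q}(i),gr^W_{-2i}H_{2i}^{M,BM}(X))$, a $\mathbb{Q}$-vector space whose dimension is the multiplicity of $\mathbb{Q}(i)$ in the lowest weight piece. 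Doing this carefully requires noting that $W_{-2i}$ is the bottom of the filtration, so it is a subobject and maps from $\mathbb{Q}(i)$ land in it.

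The same computation on the Hodge side, in the hereditary category $MHS_k$, identifies $\Hom(\mathbb{Q}(i)[2i],H_*^{BM}(X(\mathbb{C})))$ with $\Hom_{MHS}(\mathbb{Q}(i),H_{2i}^{BM})\oplus \text{Ext}^1_{MHS}(\mathbb{Q}(i),H_{2i+1}^{BM})$. Here $\text{Ext}^1$ in $MHS_k$ of weight-lower-or-equal pure Tate objects need not vanish, so I must be careful. Rather than argue about the Hodge-side $\text{Ext}^1$, I will show directly that $R^H$ induces an isomorphism $\Hom_{MT}(\mathbb{Q}(i),H^{M,BM}_{2i})\to \Hom_{MHS}(\mathbb{Q}(i),H^{BM}_{2i})$, and that the latter equals $gr_{-2i}H^{BM}_{2i}$. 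Both identify with the lowest weight graded piece, which is a sum of copies of $\mathbb{Q}(i)$ with a common multiplicity, because $R^H$ is exact, preserves weights, sends $\mathbb{Q}(i)\mapsto\mathbb{Q}(i)$, and is the identity on endomorphisms. The stated target $gr_{-2i}H^{BM}_{2i}$ is therefore reached via the map from $A_i(X)$ to Hom into $H_{2i}$, and compatibility with the cycle class map is as recalled above. The main obstacle is bookkeeping the non-canonical splitting against the functoriality of $R^H$. I would avoid this by using the canonical truncation triangles of the $t$-structure instead of the splitting, so that the map on $\Hom(\mathbb{Q}(i)[2i],-)$ factors canonically through $H_{2i}$.
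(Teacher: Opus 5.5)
Your motivic half matches the paper's argument: heredity, the $\Hom \oplus \mathrm{Ext}^1$ decomposition, killing $\mathrm{Ext}^1(\mathbb{Q}(i), H_{2i+1}^{M,BM}(X))$ by weights plus Levine, and reduction to $gr_{-2i}$. Your use of the canonical map $\Hom(\mathbb{Q}(i)[2i], -) \to \Hom(\mathbb{Q}(i), H_{2i}(-))$ is a cleaner route to compatibility with $R^H_{k,\sigma}$ than the paper's non-canonical splitting, since that map commutes with the $t$-exact realization.

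The gap is on the Hodge side. You never prove the middle isomorphism $A_i(X) \cong \Hom(\mathbb{Q}(i)[2i], H_*^{BM}(X(\mathbb{C})))$, i.e.\ that the cycle class map into absolute Hodge cohomology is bijective. Your reason for sidestepping it is also false. In rational mixed Hodge structures, $\mathrm{Ext}^1(\mathbb{Q}(i),\mathbb{Q}(j)) = \mathrm{Ext}^1(\mathbb{Q}(0),\mathbb{Q}(j-i))$ vanishes for all $j \le i$:
\begin{itemize}
\item For $n \ge 1$, Carlson's formula gives $\mathrm{Ext}^1_{MHS}(\mathbb{Q}(0),\mathbb{Q}(n)) \cong \mathbb{C}/(2\pi i)^n\mathbb{Q}$.
\item For $n = 0$, the same formula gives $\mathbb{C}/(F^0 + \mathbb{Q}) = 0$, because $F^0 = \mathbb{C}$.
\item For $n < 0$, the sub-object $W_0E$ of any extension $E$ maps isomorphically onto the quotient $\mathbb{Q}(0)$ by strictness, so it splits the extension.
\end{itemize}
So the Hodge-side vanishing range is exactly Levine's. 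Mixed Tate cohomology plus the weight bounds make the graded pieces of $H_{2i+1}^{BM}(X(\mathbb{C}))$ sums of $\mathbb{Q}(j)$ with $j \le i$. The same d\'evissage you ran motivically therefore kills $\mathrm{Ext}^1_{MHS}(\mathbb{Q}(i), H_{2i+1}^{BM})$. This is what the paper means by ``an essentially identical argument.''

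This omission matters. If that $\mathrm{Ext}^1$ could be nonzero, the middle isomorphism in the statement would be false, not merely unproved. Take $X = \mathbb{G}_m$ and $i = 0$: then $A_0(X) = 0$, $H_0^{BM} = 0$ and $H_1^{BM} = \mathbb{Q}(0)$, so the claim reduces exactly to $\mathrm{Ext}^1_{MHS}(\mathbb{Q}(0),\mathbb{Q}(0)) = 0$. Once you add this computation, both $\mathrm{Ext}^1$ terms vanish and your canonical map is an isomorphism on both sides. The rest of your argument then goes through as written.
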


\begin{proof}
    By heredity of $MT(k)$, \begin{equation}
        A_i := \Hom(\mathbb{Q}(i)[2i], M^c(X)) \cong \Hom(\mathbb{Q}(i), H_{2i}^{M, BM}(X)) \oplus DMT^1(\mathbb{Q}(i), H_{2i+1}^{M, BM}(X)).
    \end{equation} In fact $DMT^1(\mathbb{Q}(i), H_{2i+1}^{M, BM}(X))$ vanishes. This is because $R^H_{k,\sigma}$ is compatible with weight filtrations and $H_{2i+1}^{M, BM}(X)$ has weight filtration $W$ with support in degrees only lying in the interval $[-2i - 1, 0]$, hence only in even degrees in $[-2i, 0]$ by the mixed Tateness assumption. This means that the associated graded of $W$ has no $\mathbb{Q}(j)$ factors for $j \geq i + 1$, which precludes nonzero Ext terms by Equation \ref{MT hereditary} and the discussion after it. An essentially identical argument shows that \begin{equation}
        \Hom(\mathbb{Q}(i)[2i], H_*^{BM}(X(\mathbb{C}))) \cong \Hom(\mathbb{Q}(i), W_{-2i}H_{2i}^{BM}(X(\mathbb{C}); \mathbb{Q})).
    \end{equation}
    Further, since $W_{-2i - 1} = 0$ for both objects, the previous claims upgrade to isomorphisms \begin{equation}
        A_i \cong \Hom(\mathbb{Q}(i), gr_{-2i}H_{2i}^{M, BM}(X))
    \end{equation} \noindent and \begin{equation}
        \Hom(\mathbb{Q}(i)[2i], H_*^{BM}(X(\mathbb{C}))) \cong \Hom(\mathbb{Q}(i), gr_{-2i}H_{2i}^{BM}(X(\mathbb{C}); \mathbb{Q})).
    \end{equation} The targets of both Hom sets decompose into direct sums of $\mathbb{Q}(i)$'s by definition, and we commented above that realization functors send $\mathbb{Q}(i)$'s to $\mathbb{Q}(i)$'s and act as the identity on their endomorphism spaces. Since $R^H_{k, \sigma}$ is exact and respects filtrations, i.e. sends $gr_{-2i}H_{2i}^{M, BM}(X)$ to $gr_{-2i}H_{2i}^{BM}(X(\mathbb{C});\mathbb{Q})$, this means we are done.
\end{proof}

The same argument using the compatibilities of $R^H_{k,\sigma}$ to ensure that $gr_{-2i}H_{2i+1}^{M,BM}$ and $gr_{-2i}H_{2i+1}^{BM}$ consist of the same finite number of copies of $\mathbb{Q}(i)$ in different categories (but in this case being unable to preclude the existence of $\text{Ext}^1$ terms in the source because of different supports in the relevant weight filtrations) implies the following

\begin{thm}[Mixed Tate Analogue of Totaro's ``Strong Property"]\label{strong property}
    For any mixed Tate variety $X$ over a number field $k$ and for all $i$, the cycle class map induces a surjection \begin{equation}
        cl := R^H_{k,\sigma}: \Hom(\mathbb{Q}(i)[2i+1], M^c(X))\twoheadrightarrow gr_{-2i}H_{2i+1}^{BM}(X(\mathbb{C});\mathbb{Q}),
    \end{equation}
    where the source of $cl$ is isomorphic to the higher Chow group $H_{2i+1}^{M, BM}(X; \mathbb{Q}(i))$.
\end{thm}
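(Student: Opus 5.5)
We follow the proof of Theorem \ref{cycle class iso}, now with $\mathbb{Q}(i)[2i+1]$ as the source object. Since $MT(k)$ is hereditary, $M^c(X)$ is non-canonically the direct sum of its shifted cohomology objects $H_j^{M,BM}(X)[j]$. Hence
\begin{equation}
\Hom(\mathbb{Q}(i)[2i+1], M^c(X)) \cong \Hom(\mathbb{Q}(i), H_{2i+1}^{M,BM}(X)) \oplus DMT^1(\mathbb{Q}(i), H_{2i+2}^{M,BM}(X)),
\end{equation}
and the same decomposition holds in $MHS_k$ for $H_*^{BM}(X(\mathbb{C}))$, since that category is also hereditary. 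The realization $R^H_{k,\sigma}$ is exact, so it respects these decompositions up to the usual filtration compatibility. In particular it maps the first summand to $\Hom_{MHS}(\mathbb{Q}(i), H_{2i+1}^{BM}(X(\mathbb{C})))$. Surjectivity onto the target should therefore come from this $\Hom$ summand alone, and the $\text{Ext}^1$ summand can be ignored.

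Next we analyse $\Hom(\mathbb{Q}(i), H_{2i+1}^{M,BM}(X))$. Mixed Tateness puts the weights of $H_{2i+1}^{M,BM}(X)$ in the even integers of $[-2i-1,0]$, so $W_{-2i-1} = W_{-2i-2} = 0$ and $W_{-2i}$ is the lowest weight piece. A map from $\mathbb{Q}(i)$, which has pure weight $-2i$, factors through $W_{-2i}$ by strictness. So the $\Hom$ group equals $\Hom(\mathbb{Q}(i), gr_{-2i}H_{2i+1}^{M,BM}(X))$, and this graded piece is a finite direct sum $\mathbb{Q}(i)^{\oplus r}$. The same argument on the Hodge side identifies $\Hom_{MHS}(\mathbb{Q}(i), H_{2i+1}^{BM})$ with $gr_{-2i}H_{2i+1}^{BM}(X(\mathbb{C});\mathbb{Q})$. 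Because $R^H_{k,\sigma}$ is exact and weight-compatible, it sends $gr_{-2i}$ to $gr_{-2i}$, so the target is $\mathbb{Q}(i)^{\oplus r}$ in $MHS_k$ with the same $r$. Since $R^H$ acts as the identity on $\text{End}(\mathbb{Q}(i)) = \mathbb{Q}$, the induced map $\mathbb{Q}^r \to \mathbb{Q}^r$ is an isomorphism.

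Composing the pieces, the map $\Hom(\mathbb{Q}(i)[2i+1], M^c(X)) \to gr_{-2i}H_{2i+1}^{BM}$ is a split projection onto the first summand followed by an isomorphism, so it is surjective. It is not injective in general because of the $DMT^1(\mathbb{Q}(i), H_{2i+2}^{M,BM}(X))$ term. That group can be nonzero: $H_{2i+2}^{M,BM}$ may contain weight-$(-2i-2)$ pieces $\mathbb{Q}(i+1)$, and $\text{Ext}^1(\mathbb{Q}(i),\mathbb{Q}(i+1))$ is $K_1(k)\otimes\mathbb{Q} \neq 0$ by Equation \ref{MT hereditary}. The identification of the source with the higher Chow group $H_{2i+1}^{M,BM}(X;\mathbb{Q}(i))$ is the definition of Borel--Moore motivic homology recalled in Section \ref{section 2}.

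The step that needs the most care is checking that the realization of the motivic splitting agrees with the Hodge splitting closely enough. The splittings are non-canonical, so we cannot assume they correspond. We avoid this by using the canonical projection $\Hom(\mathbb{Q}(i)[2i+1], M^c(X)) \to \Hom(\mathbb{Q}(i), H_{2i+1}^{M,BM}(X))$ induced by the truncation functor. By exactness of $R^H$, this projection commutes with realization. Surjectivity then only needs this canonical map to be surjective, and that holds because the next term of the truncation long exact sequence is $\text{Ext}^2$, which vanishes by heredity.
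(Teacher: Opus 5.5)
Your proposal is correct and follows the paper's argument. Both split $\Hom(\mathbb{Q}(i)[2i+1], M^c(X))$ by heredity into a $\Hom$ part and an $\text{Ext}^1(\mathbb{Q}(i), H_{2i+2}^{M,BM}(X))$ part. Both then identify the $\Hom$ part with $\Hom(\mathbb{Q}(i), gr_{-2i}H_{2i+1}^{M,BM}(X))$ via the even-weight bottom piece, and use that $R^H_{k,\sigma}$ is the identity on $\mathrm{End}(\mathbb{Q}(i))$, so the $\text{Ext}^1$ term is exactly what obstructs injectivity. Your canonical-truncation step, and your $\mathbb{A}^{i+1}$-type example of a nonzero $\text{Ext}^1$, make explicit points the paper's one-line "same argument" leaves implicit.
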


Totaro's Theorem 5 has an analogue for really full rank sink-recurrent cluster varieties as a result of their enjoying the curious hard Lefschetz property. Said analogue is due to Lam and Speyer (see \cite{lam2022cohomology}) in the RFR, Louise, and skew-symmetric case and is proven in general in Section \ref{section 4}.

\subsection{...as Chow-K\"unneth Generation Beyond the Smooth and Proper Case}\label{subsection 3.2}

Recent work on the tautological ring of the moduli of curves has centered a property called the \textit{Chow-K{\"u}nneth generation property} (abbreviated as the \textit{CKgP}). Though we find that using homological Chow groups is more natural in the rest of the paper, throughout this section we use cohomological grading because it will allow us to write the theorems without grading shifts. We therefore write $A^*(X) := CH^*(X) \otimes \mathbb{Q}$.

\begin{defn}
    A variety $X$ over a field $k$ is said to \textit{have the CKgP} if for all algebraic stacks $Y$ of finite type admitting a stratification into global quotient stacks over $k$, the natural morphism \begin{equation}
        A^*(X) \otimes A^*(Y) \rightarrow A^*(X \times Y) \end{equation}\noindent is a surjection.
\end{defn}

\begin{lem}\label{schemes suffice}
    For a variety $X$ over a field $k$ to have the CKgP, it suffices to know that the natural morphism $A^*(X) \otimes A^*(Y) \rightarrow A^*(X \times Y)$ is surjective for all finite type separated schemes $Y$ over $k$.
\end{lem}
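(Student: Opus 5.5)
To reduce the CKgP to the case of schemes $Y$, I will use the standard method of approximating quotient stacks by schemes, as for equivariant Chow groups (Edidin–Graham, Totaro), together with the excision sequence for Chow groups. First, let $Y$ be any finite type algebraic stack admitting a stratification $Y = \bigsqcup_j Y_j$ by global quotient stacks. I will argue by induction on the number of strata. Choose a closed stratum $Z = Y_j$ with open complement $U$. There is a commutative diagram whose rows are the right exact localization sequences
\begin{equation}
A^*(X)\otimes A^*(Z) \to A^*(X)\otimes A^*(Y) \to A^*(X)\otimes A^*(U) \to 0
\end{equation}
and
\begin{equation}
A^*(X\times Z) \to A^*(X\times Y) \to A^*(X\times U) \to 0 .
\end{equation}
The first row stays exact after tensoring, since we work over $\mathbb{Q}$. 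Here I will be careful to use the homological (dimension) grading for pushforward, since $A^*$ for singular stacks should be read as $A_*$ reindexed. Suppose the outer vertical maps are surjective. Then a diagram chase (four-lemma for surjectivity) shows the middle vertical map is surjective. So it suffices to treat a single global quotient stack $Y = [W/G]$ with $W$ a separated finite type scheme and $G$ a linear algebraic group.

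For $Y=[W/G]$, fix a degree $i$. Choose a representation $V$ of $G$ with an open subset $V^\circ$ on which $G$ acts freely, such that the quotient $(W\times V^\circ)/G$ exists as a separated finite type scheme and $\operatorname{codim}(V\setminus V^\circ)$ exceeds the range of degrees needed. Edidin–Graham then give
\begin{equation}
A^i([W/G]) \cong A^i\big((W\times V^\circ)/G\big),
\end{equation}
and similarly
\begin{equation}
A^i(X\times[W/G]) = A^i([X\times W/G]) \cong A^i\big(X\times (W\times V^\circ)/G\big),
\end{equation}
where $G$ acts trivially on $X$. These identifications are compatible with the exterior product map, because both are defined through flat pullback along $[W\times V^\circ/G]\to[W/G]$ followed by restriction to an open set, and exterior products commute with flat pullback and open restriction. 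Hence surjectivity in degree $i$ for $Y$ follows from surjectivity for the scheme $Y' = (W\times V^\circ)/G$, which holds by hypothesis. Since $i$ was arbitrary, this settles the quotient stack case.

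The main obstacle I expect is in the approximation step. I need the mixed-degree components of $A^*(X)\otimes A^*(Y)$ mapping to $A^i(X\times Y)$ to be controlled by a single approximation. This works because, for a fixed $i$, only the degrees $\le i$ on the $Y$-side contribute, so one choice of $V$ with large enough codimension handles all of them at once. A secondary point is to make sure that the quotient $(W\times V^\circ)/G$ is a scheme rather than an algebraic space. If needed, I will handle this by choosing $V^\circ$ so that the quotient is a principal bundle that is quasi-projective over $W$, or else reduce to the case of algebraic spaces via a further stratification by schemes, using the same excision argument.
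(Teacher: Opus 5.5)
Your proposal is correct and follows the same route as the paper: use excision along the stratification to reduce to a single global quotient stack $[W/G]$, then use the Edidin--Graham approximation $(W\times V^\circ)/G$ to replace it by a finite type separated space. Along the way you supply details the paper leaves implicit: the surjectivity diagram chase, the compatibility of exterior products with the approximation, and a single choice of $V$ in each degree. You also note that the approximating quotient may a priori be only an algebraic space, and your fallback of stratifying it by schemes and reusing the excision step handles this correctly.
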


\begin{proof}
    Assume given an algebraic stack $Y$ satisfying the assumptions of the previous definition.
    We may reduce immediately to the case when $Y$ is just a global quotient stack by excision on a stratification.
    But the Chow groups of a global quotient stack are by definition just those of certain finite type separated $k$ schemes, per \cite{MR1614555}.
\end{proof}

For smooth and proper varieties, Canning and Larson show in \cite{MR5083265} that having the CKgP implies that the cycle class map is an isomorphism.

\begin{prop}[Lemma 3.11 of \cite{MR5083265}]
    If $X$ is smooth and proper over a field $k$ and has the CKgP, then the cycle class map \begin{equation}
        cl: CH^*(X) \otimes \mathbb{Q} \rightarrow H^*(X;\mathbb{Q}) 
    \end{equation}
    \noindent is an isomorphism.
\end{prop}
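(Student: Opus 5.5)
The plan is the standard Canning--Larson argument: apply the CKgP to the product $X \times X$ to decompose the class of the diagonal, and let the resulting correspondence act on cohomology. Write $d = \dim X$. By the CKgP applied with $Y = X$ (a smooth proper scheme, which is allowed by Lemma \ref{schemes suffice} or directly by the definition), the class $[\Delta] \in A^d(X \times X)$ lies in the image of $A^*(X) \otimes A^*(X)$. Hence
\begin{equation}
[\Delta] = \sum_j \alpha_j \times \beta_j
\end{equation}
for finitely many homogeneous classes $\alpha_j, \beta_j \in A^*(X)$ with $\deg \alpha_j + \deg \beta_j = d$.

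Next, apply the cycle class map on $X \times X$, which is compatible with exterior products. This gives
\begin{equation}
cl([\Delta]) = \sum_j cl(\alpha_j) \times cl(\beta_j) \in H^{2d}(X \times X;\mathbb{Q}).
\end{equation}
The class of the diagonal acts as the identity correspondence on $H^*(X;\mathbb{Q})$. For any $\gamma \in H^*(X)$ we therefore have
\begin{equation}
\gamma = p_{2*}\bigl(p_1^*\gamma \cdot cl[\Delta]\bigr) = \sum_j \Bigl(\int_X \gamma \cdot cl(\alpha_j)\Bigr) cl(\beta_j),
\end{equation}
using the projection formula and Poincar\'e duality on the smooth proper variety $X$. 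So every cohomology class is a $\mathbb{Q}$-linear combination of algebraic classes $cl(\beta_j)$, which proves surjectivity. In particular, odd cohomology vanishes, since $\int_X \gamma\, cl(\alpha_j)$ can only be nonzero in even total degree.

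For injectivity, let $z \in A^i(X)$ with $cl(z) = 0$. The same decomposition acts on Chow groups, because the diagonal is the identity correspondence on $A^*(X)$:
\begin{equation}
z = \sum_j \deg(z \cdot \alpha_j)\, \beta_j .
\end{equation}
The degree $\deg(z \cdot \alpha_j) = \int_X cl(z)\, cl(\alpha_j)$ is computed by the cycle class map, because the cycle class map is a ring homomorphism compatible with proper pushforward to a point. Since $cl(z) = 0$, every coefficient vanishes, so $z = 0$.

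The main care required is in the bookkeeping for the base field. If $k$ is not a subfield of $\mathbb{C}$, $H^*$ must be read as $\ell$-adic cohomology of $X_{\bar k}$, or one must fix an embedding. One must also check that the CKgP on $X$ suffices for the single product $X \times X$ even though $X \times X$ itself need not have the CKgP; this is immediate, since only surjectivity onto $A^*(X \times X)$ is used. The rest is formal properties of correspondences.
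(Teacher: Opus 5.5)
Your decomposition-of-the-diagonal argument is correct and complete. The paper gives no proof of its own here and simply imports the statement as Lemma 3.11 of \cite{MR5083265}; what you wrote is the standard argument behind that lemma. One small tidy-up: in the $\ell$-adic reading, the conclusion should be stated as $CH^*(X)\otimes\mathbb{Q}_\ell \cong H^{2*}(X_{\bar k};\mathbb{Q}_\ell(*))$, and your argument gives this verbatim.
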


Since mixed Tateness implies an isomorphism of the cycle class map onto the pure part of $H^*(X;\mathbb{Q})$ for general $X$, this suggests a relationship between the two conditions. We show now that, indeed, they are \textit{the same} for smooth and proper varieties.

\begin{thm}\label{mixed Tate vs. ckgp}
    Let $X$ be a smooth and proper variety over a field $k$. Then the following conditions are equivalent:

    \begin{enumerate}
        \item $M^c(X)$ is mixed Tate.
        \item $X$ has the CKgP.
        \item For every finitely generated field extension $F/k$, the morphism $A^*(X) \rightarrow A^*(X_F)$ induced by pullback along the flat cover $Spec(F) \rightarrow Spec(k)$ is a surjection.
    \end{enumerate}

    Further, mixed Tateness implies the remaining points even if $X$ is not smooth and proper.
\end{thm}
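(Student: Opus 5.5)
The plan is to prove (1) $\Rightarrow$ (2) $\Rightarrow$ (3) $\Rightarrow$ (1). The first two implications hold with no smoothness or properness hypothesis, which gives the final clause of the statement; only the last implication uses that $X$ is smooth and proper.

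\emph{(1) $\Rightarrow$ (2).} By Lemma \ref{schemes suffice} it suffices to treat finite type separated schemes $Y$. For these, Proposition \ref{strong CKgP} shows that $A_*(X)\otimes A_*(Y)\to A_*(X\times Y)$ is an isomorphism whenever $M^c(X)\in DMT(k)$. Surjectivity is all we need, and it is insensitive to the passage between homological and cohomological grading, since we are working with the exterior product on total Chow groups.

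\emph{(2) $\Rightarrow$ (3).} Let $F/k$ be finitely generated, and choose an integral $k$-variety $Y$ with function field $F$. The restriction $A^*(X\times Y)\to A^*(X_F)$ is surjective: $A^*(X_F)$ is the colimit of $A^*(X\times V)$ over dense opens $V\subset Y$, and each map $A^*(X\times Y)\to A^*(X\times V)$ is surjective by the localization sequence. The CKgP says $A^*(X)\otimes A^*(Y)\to A^*(X\times Y)$ is surjective. Composing with restriction to the generic point, any class $a\otimes b$ with $b\in A^{>0}(Y)$ maps to $0$, because $A^{>0}(\mathrm{Spec}\,F)=0$. Classes $a\otimes[Y]$ map to the pullback of $a$. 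Hence $A^*(X)\to A^*(X_F)$ is surjective.

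\emph{(3) $\Rightarrow$ (1).} This is the main step, and I would follow the decomposition-of-the-diagonal argument, as in Kimura, Totaro, or Canning--Larson. Take $F=k(X)$ with $X$ smooth, proper, and of dimension $d$. The generic point of the diagonal gives a class in $A^d(X_F)$, and by (3) this class equals the pullback of some $\sum_j \alpha_j\in A^d(X)$. Spreading out via the localization sequence on $X\times X$, we get
\[ [\Delta]=\textstyle\sum_j p_2^*\alpha_j + (\text{class supported on } X\times D) \]
for a proper closed subset $D\subsetneq X$. Now iterate, applying (3) to the function fields of the components of $D$ and working with the correspondence supported on $X\times\widetilde D$ via resolution or alterations. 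The outcome is a decomposition of $[\Delta_X]$ in $A^d(X\times X)$ as a sum $\sum_i a_i\times b_i$ of exterior products. In $DM(k)$, the correspondence $[\Delta_X]$ acts on $M(X)=M^c(X)$ as the identity. Each summand $a_i\times b_i$ factors through $\mathbb{Q}(c_i)[2c_i]$, with $c_i$ determined by the degrees of $a_i$ and $b_i$. Therefore $M(X)$ is a direct summand of a finite sum of Tate motives, hence lies in the thick subcategory $DMT(k)$; indeed it is pure Tate.

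The hard step is making the induction on the support rigorous. Points of $D$ may have residue fields that are not of the form $k(X)$, and $D$ may be singular. I would handle this by applying (3) to every finitely generated field, using de Jong alterations or a Hironaka resolution together with rational coefficients to push correspondences forward, and inducting on $\dim D$. This is the same argument as in Lemma 3.11 of \cite{MR5083265}, and I would cite it where possible.
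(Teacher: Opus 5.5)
Your proposal is correct and follows the paper's proof: (1)$\Rightarrow$(2) via Proposition \ref{strong CKgP} and Lemma \ref{schemes suffice}, (2)$\Rightarrow$(3) by spreading out over a model $Y$ of $F$ and passing to the generic point, and (3)$\Rightarrow$(1) is exactly Theorem 4.1 of \cite{totaro2016motive}, which the paper simply cites and whose proof is the decomposition-of-the-diagonal argument you sketch. The ``hard step'' you flag is actually easy and needs no resolution or alterations: since (3) holds for every finitely generated $F$, at each generic point of $D$ you subtract a class $\beta_j\times[D_j]$ and descend to a smaller support using only localization for homological Chow groups of $X\times D$. Also, the right reference for this step is Totaro's theorem rather than Lemma 3.11 of \cite{MR5083265}, which only concludes that the cycle class map is an isomorphism.
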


\begin{proof}
    $1$ implies $2$ by Proposition \ref{strong CKgP} and Lemma \ref{schemes suffice}. That $3$ implies $1$ for smooth and proper $X$ is Theorem 4.1 of \cite{totaro2016motive}. Note that everything just stated is true for arbitrary $X$ except $3 \implies 1$.\\

    Therefore we are left to show that $2$ implies $3$. Fix a variety $Y$ over $k$ with function field $F$, and let $\{U_i\}$ be a cofiltered system of affine opens of $Y$ whose limit is Spec $F$. By the CKgP, for each $U_i$ we receive a surjective map $A^*(X) \otimes A^*(U_i) \rightarrow A^*(X \times U_i)$. Since $A^*$ commutes with cofiltered limits of schemes with affine transition maps, we see that in the limit we receive a map $A^*(X) \cong A^*(X) \otimes A^*(\Spec F) \rightarrow A^*(X_F)$. But a colimit of a commuting sequence of surjective morphisms is a surjection, so we're done.
\end{proof}

For non-smooth and proper $X$ this chain cannot be reversed.

\begin{exmp}
    Let $X = \mathbb{A}^2 \setminus E$, where $E$ is the affine part of a smooth genus $1$ curve. $X$ is certainly smooth but it is not proper. Since $E$ is not mixed Tate, $X$ is not either. However, $X$ satisfies the $CKgP$ by Lemma 3.3 of \cite{MR5083265}.
\end{exmp}

Indeed, while mixed Tateness seems reasonably useful as a condition even for non-smooth varieties, it does lack some of the flexibility in the smooth but possibly non-compact setting that the CKgP enjoys. While it plays well with stratifications, products and group quotients, the previous example shows that it does not persist under passage to opens. In particular, affine space minus a general hypersurface of sufficiently high degree will never be mixed Tate.\\

On the other hand, we are not aware of any standard way to exhibit a proper, non-smooth variety as having the CKgP without also showing that it is mixed Tate.

\section{Motivic Invariants of Cluster Varieties and their Compactifications}\label{section 4}

\subsection{Background on Cluster Varieties and Curious Hard Lefschetz}\label{subsection 4.1}

We refer the reader to Section 3 of \cite{galashin2026braid} for the definition of a cluster variety. We choose this reference in particular because we work in the generality of skew-symmetrizable cluster algebras. Therefore a cluster variety $\mathcal{A}(\Sigma)$ depends on a \textit{seed} $\Sigma = (\tilde{E}, \{d_i \in \mathbb{Z}_{>0}\}_{i \in [n + m]})$, where $\tilde{E}$ is an $(n + m) \times n$ \textit{extended exchange matrix} and \begin{equation}
    d_j \tilde{E}_{ij} = -d_i \tilde{E}_{ji} \:\text{for}\: i,j \in [n + m].
\end{equation} The definition of $\mathcal{A}(\Sigma)$ also depends on a choice of labeling of $[n + m]$ landing in $\mathbb{Q}(x_1, ..., x_{n+m})$ (a \textit{cluster}), which we will usually leave tacit.\\

Thinking of $[n +m]$ as a vertex set $V$, a seed $\Sigma$ carries with it sets of mutable and frozen vertices $V \setminus F$ and $F$ of cardinality $n$ and $m$ (and tacit identifications $V \setminus F \cong [n]$ and $F \cong [n+m]\setminus[n]$). Therefore the rows of $\tilde{E}$ are indexed by $V$ and the columns of $\tilde{E}$ are indexed by $V \setminus F$. A seed $\Sigma$ is said to be \textit{skew-symmetric} if all of the $d_i$ are equal, in which case the information of $\Sigma$ is equivalent to that of an ice quiver $Q$ on vertex set $V$. Therefore we will often write $\mathcal{A}(Q)$ to denote the corresponding cluster variety of a skew-symmetric seed. We decline to define mutation of seeds, but we note that any cluster variety is defined over $\mathbb{Z}$.\\

To a seed $\Sigma$ we may associate a directed graph $\Gamma_\Sigma$ with vertex set $V \setminus F$ and an arrow $i \rightarrow j$ whenever $\tilde{E}_{ij} > 0$. Given $s \in V \setminus F$ we write $N_{in}(s)$ to denote the set of elements $v \in V \setminus F$ such that there is an edge from $v$ to $s$ in $\Gamma_\Sigma$. Further, given $I \subset V \setminus F$ we write $\Sigma^{\backslash I}$ to denote the seed induced from $\Sigma$  by freezing all the elements of $I$. A vertex $s \in \Gamma_\Sigma$ is called a \textit{sink} if $\tilde{E}_{vs} \geq 0$ for all $v \in V \setminus F$, i.e. if it is a sink of the graph $\Gamma_\Sigma$.\\

A seed $\Sigma$ or cluster variety $\mathcal{A}(\Sigma)$ is said to be \textit{full rank} (respectively \textit{really full rank}, often abbreviated to \textit{RFR}) if $\tilde{E}$ has full rank over $\mathbb{Q}$ (resp. the rows of $\tilde{E}$ span $\mathbb{Z}^n$). We will use freely that a seed being RFR is preserved under freezings of vertices. We say that $\mathcal{A}(\Sigma)$ is \textit{isolated} if $\Gamma_\Sigma$ is a discrete graph, and we say $\mathcal{A}(\Sigma)$ is \textit{acyclic} if $\Gamma_{\Sigma'}$ has no directed cycles for some mutation $\Sigma'$ of $\Sigma$. The seed of an isolated cluster variety is skew-symmetrizable by any choice of $\{d_i\}$, and so by fiat we declare that such seeds are skew-symmetric.\\

We say that $\mathcal{A}(\Sigma)$ is \textit{sink-recurrent} if $\Sigma$ is sink-recurrent, where the class of sink-recurrent seeds is defined recursively as follows.

\begin{itemize}
    \item Any isolated seed $\Sigma$ is sink-recurrent.
    \item Any seed that is mutation equivalent to a sink-recurrent seed is sink-recurrent.
    \item Suppose that there exists $s \in V \setminus F$ such that $s$ is a sink of $\Gamma_\Sigma$, and such that the seeds $\Sigma^{\backslash s}$ and $\Sigma^{\backslash N_{in}(s) \cup \{s\}}$ are sink-recurrent. Then $\Sigma$ is sink-recurrent.
\end{itemize}

If $s \in \Gamma_\Sigma$ is a sink satisfying the third condition above then we say that $\Sigma$ is \textit{sink-recurrent at $s$}. Being RFR or sink-recurrent is a mutation invariant of a seed $\Sigma$. Further, if $\Sigma$ is RFR and sink-recurrent at $s$ then it is a standard fact that $\mathcal{A}(\Sigma^{\backslash s})$ and $\mathcal{A}(\Sigma^{\backslash N_{in}(s)})$ cover $\mathcal{A}(\Sigma)$ with overlap $\mathcal{A}(\Sigma^{\backslash N_{in}(s) \cup \{s\}})$. In this case the sink-recurrence of $\mathcal{A}(\Sigma^{\backslash N_{in}(s) \cup \{s\}})$ implies that $\mathcal{A}(\Sigma^{\backslash N_{in}(s)})$ is also sink-recurrent.\\

RFR sink-recurrent cluster varieties are locally acyclic, and therefore they are smooth by the main theorem of \cite{muller2013locally}. It turns out that their motivic invariants are both remarkably simple and exhibit interesting combinatorial structures. Indeed, in the RFR, Louise, and skew-symmetric case, Lam and Speyer \cite{lam2022cohomology} showed the following package of theorems.

\begin{thm}[Main Theorems of \cite{lam2022cohomology}]\label{louise CHL}
    Any RFR, Louise, and skew-symmetric cluster variety $\mathcal{A}(Q)$ has $H^{k, (p, q)}(\mathcal{A}(Q)) = 0$ for all $p \neq q$ (i.e., $\mathcal{A}(Q)$ has mixed Tate cohomology). Further, the weight filtration on each $H^k(\mathcal{A}(Q))$ is split over $\mathbb{Q}$.
\end{thm}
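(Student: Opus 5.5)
This result is due to Lam and Speyer \cite{lam2022cohomology}, and the paper cites it rather than reproving it. The route I would take is induction on the number of mutable vertices, using the Louise recursion and Mayer--Vietoris.

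\emph{Base case.} I would first treat an isolated, RFR, skew-symmetric seed. Each mutable vertex $i$ then has exchange relation $x_i x_i' = M_i^+ + M_i^-$, where $M_i^\pm$ are monomials in the frozen variables. Using really full rank, I would choose coordinates on the frozen torus so that these binomials become independent coordinates. This should exhibit $\mathcal{A}(Q)$ as a product of a torus $(\mathbb{G}_m)^a$ with copies of the smooth affine surface $\{xy = 1+z,\ z \neq 0,-1\}$, or with copies of $\{xy = z\}$ minus a divisor. Each factor can be computed by hand. Its cohomology is spanned by $\operatorname{dlog}$-type classes, so each $H^k$ is a direct sum of copies of $\mathbb{Q}(-j)$ with $j$ the form degree. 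That gives both mixed Tateness and a $\mathbb{Q}$-splitting, and Künneth in $MHS_{\mathbb{Q}}$ propagates both to the product.

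\emph{Inductive step, mixed Tateness.} If $Q$ is Louise, there is a mutation-equivalent quiver with a separating edge $s \to t$. Then $\mathcal{A}(Q)$ is covered by the opens $\mathcal{A}(Q^{\backslash s})$ and $\mathcal{A}(Q^{\backslash t})$, with intersection $\mathcal{A}(Q^{\backslash \{s,t\}})$. All three seeds are again RFR, Louise and skew-symmetric, with fewer mutable vertices. The Mayer--Vietoris sequence is a long exact sequence of mixed Hodge structures, so each $H^k(\mathcal{A}(Q))$ is an extension of a subobject of $H^{k}(U)\oplus H^k(V)$ by a quotient of $H^{k-1}(U\cap V)$. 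The full subcategory of mixed Hodge structures with $h^{p,q}=0$ for $p\neq q$ is closed under subobjects, quotients and extensions, because $\operatorname{gr}^W$ is exact. Hence mixed Tateness of $H^*(\mathcal{A}(Q))$ follows immediately from the inductive hypothesis.

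\emph{Inductive step, splitting (main obstacle).} Splitting of $W$ over $\mathbb{Q}$ is not closed under extensions, since $\mathrm{Ext}^1_{MHS}(\mathbb{Q}(-a),\mathbb{Q}(-b))$ can be nonzero for $a>b$. So the long exact sequence alone cannot suffice. What I would try first is to produce explicit $\mathbb{Q}$-rational representatives. The aim is to show that $H^k(\mathcal{A}(Q);\mathbb{Q})$ is spanned by algebraic de Rham classes of the form $\omega = \operatorname{dlog} f_1\wedge\cdots\wedge\operatorname{dlog} f_j \wedge \eta$, with $f_i$ cluster monomials or frozen variables. Each such class would generate a Hodge structure isomorphic to $\mathbb{Q}(-j)$ inside $H^k$.

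These classes are defined over $\mathbb{Q}$ and restrict compatibly to the opens of the Louise cover. The Mayer--Vietoris connecting map should send a product of logarithmic forms to another such product. This is a residue-type computation along the separating edge, involving $\operatorname{dlog}$ of the exchange binomial. If that holds, the spans of the explicit classes give a $\mathbb{Q}$-rational grading that splits $W$ on each $H^k(\mathcal{A}(Q))$.

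The delicate point is verifying that the connecting homomorphism is compatible with this grading. An alternative is to use the curious Lefschetz class (a $\mathbb{Q}$-rational $(2,2)$ class) to build a rational $\mathfrak{sl}_2$-action, whose weight decomposition would be the required splitting. If neither approach works uniformly, I would fall back on citing \cite{lam2022cohomology} directly for the splitting.
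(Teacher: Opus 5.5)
Your mixed-Tate half is fine and is the route the paper indicates. You induct over the Louise (or sink-recurrent) cover, use that Hodge numbers are additive along the Mayer--Vietoris sequence of mixed Hodge structures, and take the isolated case (Section 7 of \cite{lam2022cohomology}) as base. There is one slip in that base case. The factor is $\{xx'=1+z,\ z\neq 0\}\cong\mathbb{A}^2\setminus\{xx'=1\}$, with $H^1=\mathbb{Q}(-1)$ and $H^2=\mathbb{Q}(-2)$; also removing $z=-1$ leaves only the intersection of the two cluster tori.

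The genuine gap is the splitting clause. Your first route is not yet an argument. Classes of weight $2p<2k$ in $H^k$ do occur: for example $H^{3,(2,2)}(\mathcal{A}(Q))\cong H^1(\underline{Q}^{mut};\mathbb{Q})$ by Proposition 9.4 of \cite{LSII}. For such classes the proposed grading by the number of $\mathrm{dlog}$ factors rests entirely on the unspecified form $\eta$. You also give no reason the connecting map respects this grading. Your last resort, citing \cite{lam2022cohomology}, is what the paper itself does, but it leaves the second clause unproved.

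The missing idea is to carry \emph{curious Lefschetz}, not splitting, through the induction, and to deduce splitting only at the end. This is the route the paper describes: a rational class $[\gamma]\in H^{2,(2,2)}$ whose powers $[\gamma]^{e-p}$ are isomorphisms, followed by Theorem 3.3 of \cite{lam2022cohomology}. Unlike splitting, hard Lefschetz for $L=[\gamma]\cup$ on $\mathrm{gr}^WH^*$ about $p=e$ \emph{is} stable under the Mayer--Vietoris step. This needs $\gamma$ to be a global form restricting to the Lefschetz classes of $U$, $V$ and $U\cap V$, which all have the same dimension $2e$. Then:
\begin{enumerate}
\item $\mathrm{gr}^W$ of the Mayer--Vietoris sequence is exact and commutes with $L$.
\item A graded $\mathbb{Q}[L]$-map between Lefschetz modules sends primitives to primitives. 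Here the primitives are the elements of $\mathrm{gr}^W_{2q}$, $q\le e$, killed by $L^{e-q+1}$. Hence such a map is $\mathfrak{sl}_2$-equivariant, and the kernel and cokernel of the restriction map are again Lefschetz.
\item The five lemma, applied to $L^j$, handles the extension of that kernel by that cokernel.
\end{enumerate}

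To pass from Lefschetz to splitting, a bare Jacobson--Morozov triple over $\mathbb{Q}$ is not enough. Every filtration of a $\mathbb{Q}$-vector space splits, but such a splitting need not be compatible with $F$. Use strictness instead. The maps $L^j$ are morphisms of MHS, so $\widetilde{P}_q:=W_{2q}\cap\ker L^{e-q+1}$ is a sub-MHS. It maps isomorphically onto the primitive part of $\mathrm{gr}^W_{2q}$:
\begin{itemize}
\item injectivity follows from hard Lefschetz on $\mathrm{gr}^W$;
\item surjectivity follows from strictness, which gives $L^{e-q+1}(H)\cap W_{2(2e-q+1)}=L^{e-q+1}(W_{2q-2})$.
\end{itemize}
Then $H^*=\bigoplus_{q\le e}\bigoplus_{j\le e-q}L^j\widetilde{P}_q$ is a decomposition into pure Tate sub-MHS. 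Odd-dimensional $\mathcal{A}(Q)$ are handled by passing to $\mathcal{A}(Q)\times\mathbb{G}_m$.
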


They show this by exhibiting the existence, for any $2e$-dimensional $\mathcal{A}(Q)$, of a $2$-form $\gamma$ such that $[\gamma] \in H^{2, (2,2)}(\mathcal{A}(Q))$ and \begin{equation}\label{CHL maps}
    [\gamma]^{e - p}:H^{p + s, (p, p)}(\mathcal{A}(Q)) \rightarrow H^{2e - p + s, (2e - p, 2e - p)}(\mathcal{A}(Q))
\end{equation}
\noindent is an isomorphism. By taking the product of an odd-dimensional $\mathcal{A}(Q)$ with $\mathbb{C}^*$ this is enough to show that a similar isomorphism (though, notably, one not actually induced by cupping with any $[\gamma] \in H^{2, (2, 2)}(\mathcal{A}(Q))$) exists between $H^{p + s, (p, p)}$ and $H^{d - p + s, (d - p, d - p})$ for any $d$-dimensional RFR, Louise, skew-symmetric $\mathcal{A}(Q)$.\\

A variety $X$ equipped with a $\gamma$ as in Equation \ref{CHL maps} is said to satisfy the \textit{curious hard Lefschetz} property (with respect to $\gamma$). Further, one which simply has the corresponding panoply of isomorphisms between mixed Hodge groups is said to have \textit{curious Poincar\'e symmetry}. Of particular computational interest to us in the sequel is the fact that if $X$ has curious Poincar\'e symmetry then $$h^{2i, (i, i)}(X) = h^{d, (d-i, d-i)}(X)$$ \noindent for all $i$.\\

Theorem \ref{louise CHL} extends to the more general setting of cluster varieties which are only RFR and sink-recurrent.

\begin{prop}\label{sink-recurrent CHL}
     Any RFR sink-recurrent cluster variety $\mathcal{A}(\Sigma)$ has curious Poincar\'e symmetry, mixed Tate cohomology, and $\mathbb{Q}$-split weight filtrations.
\end{prop}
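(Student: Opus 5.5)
Since RFR and sink-recurrence are mutation invariant, and mutation does not change $\mathcal{A}(\Sigma)$, I will argue by induction along the recursive definition of sink-recurrence. The induction will carry a combined statement: $\mathcal{A}(\Sigma)$ has mixed Tate cohomology, $\mathbb{Q}$-split weight filtrations, and curious Poincar\'e symmetry. It will be easiest to prove the stronger statement that curious hard Lefschetz holds with respect to a class $[\gamma]\in H^{2,(2,2)}$ for even-dimensional varieties. Odd dimensions are then handled by the $\mathbb{C}^*$-product trick recalled above.

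\textbf{Base case.} An isolated RFR seed gives $\mathcal{A}(\Sigma)$ as a torus times a product of pieces of the form $\{x_s x_s' = M_s + N_s\}$ with invertible frozen monomials. The RFR condition lets one change coordinates so that the frozen part splits off. The variety is then visibly a product of tori and copies of $\mathbb{A}^2\setminus\{xy=1\}$-type pieces. For these pieces, mixed Tateness, splitting, and curious hard Lefschetz with respect to the log-canonical form $\gamma=\sum \tilde{E}$-weighted $d\log x_i\wedge d\log x_j$ are direct computations.

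\textbf{Inductive step.} Suppose $\Sigma$ is sink-recurrent at $s$. I will use the standard open cover
\[
\mathcal{A}(\Sigma)=\mathcal{A}(\Sigma^{\backslash s})\cup\mathcal{A}(\Sigma^{\backslash N_{in}(s)}),
\]
whose overlap is $\mathcal{A}(\Sigma^{\backslash N_{in}(s)\cup\{s\}})$. All three pieces are RFR and sink-recurrent with fewer mutable vertices, so the inductive hypothesis applies to each of them. The Mayer--Vietoris sequence is a sequence of mixed Hodge structures. Mixed Tateness of cohomology is preserved by extensions and by kernels and cokernels in $MHS$, which gives the first property.

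For the curious Lefschetz property I will follow Lam--Speyer's strategy. The form $\gamma$ on $\mathcal{A}(\Sigma)$ restricts to the corresponding forms on the three pieces, so the maps in Mayer--Vietoris commute with cupping by $[\gamma]$. The key extra input is that each piece is a torus-bundle-like localization. Freezing a vertex amounts to inverting $x_s$, and the RFR condition says the frozen directions split off a torus factor $(\mathbb{C}^*)^r$ whose $d\log$ classes pair with $\gamma$ symplectically. One then checks that the long exact sequence breaks into short exact pieces on each graded weight level, and uses a five-lemma-type argument on the primitive decompositions.

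I expect this step to be the main obstacle. Lefschetz isomorphisms are not generally inherited through long exact sequences. What I would try first is to show that the connecting maps vanish on $gr^W$ in the relevant range. That vanishing follows from weight considerations: for a smooth affine piece, $H^{k}$ has weights in $[k,2k]$, and mixed Tateness restricts these to even values. With the connecting maps gone, the Lefschetz isomorphisms for the short exact pieces follow by comparing dimensions on both sides.

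\textbf{Splitting.} I will deduce $\mathbb{Q}$-splitness of the weight filtration from the Lefschetz structure in the same way as Lam--Speyer. The weight and curious-Lefschetz gradings together give a canonical $\mathfrak{sl}_2$-type splitting, which is defined over $\mathbb{Q}$ because $[\gamma]$ is rational.
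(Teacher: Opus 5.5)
Your mixed Tate step (Mayer--Vietoris, plus closure of mixed Tate Hodge structures under subobjects, quotients and extensions) is the same induction the paper sketches. For the $\mathbb{Q}$-splitting you, like the paper, ultimately defer to Lam--Speyer (Theorem 3.3 of \cite{lam2022cohomology}). The gap is in the curious Lefschetz step, where the paper simply cites Theorem 10.1 of \cite{galashin2022braid}. Your claim that the connecting maps $\delta\colon H^{k-1}(U_{12})\to H^k(\mathcal{A}(\Sigma))$ vanish on $gr^W$ for weight reasons is false. The weights of $H^{k-1}(U_{12})$ lie in $[k-1,2k-2]$ and those of $H^k(\mathcal{A}(\Sigma))$ lie in $[k,2k]$; these ranges overlap for $k\geq 2$. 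In fact $\delta$ is nonzero in small examples.

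Take the acyclic triangle $1\to 2\to 3$, $1\to 3$ with an RFR framing, and $s=3$. The pieces $U_1=\mathcal{A}(\Sigma^{\backslash 3})$, $U_2=\mathcal{A}(\Sigma^{\backslash\{1,2\}})$ and the torus $U_{12}$ have mutable graphs an edge, a point, and nothing. So none of them has any $H^{3,(2,2)}$ (e.g.\ by Proposition 9.4 of \cite{LSII}), while $H^{3,(2,2)}(\mathcal{A}(\Sigma))\cong H^1(\underline{Q}^{mut};\mathbb{Q})=\mathbb{Q}$. Since $gr^W_4$ of the Mayer--Vietoris sequence is exact, $gr^W_4\delta\colon H^{2,(2,2)}(U_{12})\to H^{3,(2,2)}(\mathcal{A}(\Sigma))$ surjects onto a nonzero space. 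So the sequence does not break into short exact pieces, and your dimension comparison has nothing to apply to.

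What does work is to drop the demand that $\delta=0$. All four spaces have dimension $d=2e$, so their Lefschetz structures on $\bigoplus_p H^{p+s,(p,p)}$ share the center $p=e$. Every Mayer--Vietoris map preserves $p$ and commutes with cupping by $[\gamma]$ and its restrictions; this includes $\delta$, which is $H^*(\mathcal{A}(\Sigma))$-linear up to sign. For such a map $\rho$ between Lefschetz modules with common center, $\ker\rho$ and $\mathrm{coker}\,\rho$ are again Lefschetz by a two-line diagram chase. Extensions of Lefschetz modules are Lefschetz by the five lemma. Since $gr^W H^k(\mathcal{A}(\Sigma))$ is an extension of $gr^W\ker\rho_k$ by $gr^W\mathrm{coker}\,\rho_{k-1}$, this gives curious Lefschetz for $\mathcal{A}(\Sigma)$.

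Two further points still need attention. You must phrase the inductive hypothesis so that it applies to the restricted classes $[\gamma]|_{U_1}$, $[\gamma]|_{U_2}$, $[\gamma]|_{U_{12}}$. Also, if the chosen sink $s$ is isolated in $\Gamma_\Sigma$, then $N_{in}(s)=\emptyset$ and $U_2=\mathcal{A}(\Sigma)$. In that case your claim that all three pieces have fewer mutable vertices fails, and the recursion needs a separate reduction.
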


\begin{proof}
Per Theorem 10.1 of \cite{galashin2022braid} and Footnote 6 of \cite{galashin2026braid}, one sees that any even-dimensional RFR sink-recurrent cluster variety $\mathcal{A}(\Sigma)$ satisfies the curious hard Lefschetz property (with respect to some $2$-form $\gamma$ such that $[\gamma] \in H^{2, (2, 2)}(\mathcal{A}(\Sigma))$). Therefore any RFR sink-recurrent cluster variety has curious Poincar\'e symmetry. Theorem 3.3 of \cite{lam2022cohomology} then implies that $H^*(\mathcal{A}(\Sigma); \mathbb{Q})$ has split weight filtrations.\\

To see that the cohomologies of these varieties are mixed Tate is essentially the same argument as seeing that they have curious Poincar\'e symmetry. The latter proceeds by inducting on the number of mutable vertices in a sink-recurrent cover of $\mathcal{A}(\Sigma)$ with base case a full rank, skew-symmetric and isolated cluster variety. That the theorem holds for these is the content of Section 7 of \cite{lam2022cohomology}.
\end{proof}

Our main example of a class of RFR sink-recurrent cluster varieties is that of \textit{(double) braid varieties}, a class containing open Richardson varieties $R_{u,w}^\circ$ in flag varieties of arbitrary Lie type. Once again we refer the reader to the first two sections of \cite{galashin2026braid} for definitions and general theory. Double braid varieties admit cluster structures which are RFR and sink-recurrent by Theorem 4.10 and Corollary 6.7 of \cite{galashin2026braid}.\\

We will only actually do computations involving (double) braid varieties in type $A$, and then we will only work over $\mathbb{C}$. This allows us to discuss them using the formalism developed in \cite{MR4868947}. Therefore, we write $\beta \in$ $Br_n^+$ to denote a positive braid or braid word on $n$ strands with Demazure product $\delta(\beta) = w_0 \in S_n$, and write $\Delta := \sigma_1(\sigma_2 \sigma_1)\cdots(\sigma_{n-1}\dots \sigma_1)$ to denote the half-twist. One must fix an \textit{expression} for $\beta$ in order to define a braid variety, but we often do so tacitly because the resulting isomorphism class of variety does not depend on this choice (though each expression for $\beta$ induces a possibly different cluster structure on $X(\beta)$).\\

Let $i \in [n-1]$. Following Definition 2.1 of \cite{MR4868947}, we define the \textit{braid matrix} $B_i(z)$ associated to the twist $\sigma_i$ by
\begin{equation} B_i(z) = I_{i-1} \oplus B(z) \oplus I_{n - i - 1},
\end{equation}
\noindent where $B(z) = \begin{bmatrix}
    0      & 1  \\
    1       & z
\end{bmatrix}.$
Therefore $B_i(z)$ is the matrix which is the identity away from a $2 \times 2$ subblock with upper left corner at $(i,i)$ which equals $B(z)$. Given $\beta = \sigma_{i_1} \cdots \sigma_{i_r} \in Br^+_n$ and $(z_1, ..., z_r) \in \mathbb{C}^r$, we write 
\begin{equation}
    B_\beta(z) := B_{i_1}(z_1) \cdots B_{i_r}(z_r).
\end{equation}
\begin{defn}
    The \textit{braid variety} $X(\beta)$ associated to a positive braid word $\beta = \sigma_{i_1} \cdots \sigma_{i_r}$ is the subvariety of $\mathbb{C}^r$ on which $B_\beta(z)w_0$ is upper triangular, i.e.
    \begin{equation}
        X(\beta) := \{(z_1, ..., z_r) \in \mathbb{C}^r\:|\:B_\beta(z)w_0 \in \mathcal{B}\}.
    \end{equation}
\end{defn}

$X(\beta)$ is a $(\ell(\beta) - \ell(\Delta))$-dimensional affine variety cut out by the vanishing of generalized minors of $B_\beta(z)$. Accordingly, we will often refer to the quantity $\ell(\beta) - \ell(\Delta)$ as $d(\beta)$, or even just $d$ when clear from context, in the sequel. By Corollary 5.8 and Theorem 5.9 of \cite{galashin2022braid}, any $X(\beta)$ admits the structure of a (skew-symmetric) RFR and sink-recurrent cluster variety.

\subsection{Cluster Structures on Deletions and Linearity of Cluster Varieties}\label{subsection 4.2}

Our preliminary goal is to show that every RFR sink-recurrent cluster variety and many compactifications thereof are linear schemes over any field $k$. Since linear varieties are mixed Tate by Proposition \ref{linear implies MT}, this provides a motivic lift of (and weakens the assumptions of) the first main theorem of \cite{lam2022cohomology}, which says that really full rank, Louise and skew-symmetric cluster varieties have mixed Tate mixed Hodge structures.\\

We will exhibit RFR sink-recurrent cluster varieties as linear by constructing a ``freezing-deletion" stratification of them into things which are products of RFR sink-recurrent cluster varieties with affine spaces. This stratification will in some sense be as simple as possible: namely, if $\Sigma$ is sink-recurrent at $s$ and if $x_s$ is the corresponding cluster variable on $\mathcal{A}(\Sigma)$, we will see that $\mathcal{A}(\Sigma^{\backslash s})$ and $V(x_s)$ are both products of RFR sink-recurrent cluster varieties with affine spaces.\\

This stratification was essentially already considered by \cite{galashin2026braid}, in particular in their Theorem 3.13 where it is used to \textit{produce} cluster structures on varieties. Our contribution is a sort of converse: that if one begins with an RFR sink-recurrent cluster variety, then in fact the ``deleted", i.e. $V(x_s)$, portion of the stratification is $\mathbb{A}^1$ times an RFR sink-recurrent cluster variety too. The relevant seed is constructed as follows.

\begin{con}
    Let $\Sigma$ be a really full rank sink-recurrent seed, and let $s \in \Gamma_\Sigma$ be a sink. We define a seed $\Sigma^{/s}$ via the following steps.

    \begin{itemize}
        \item First, start with $\Sigma^{\backslash N_{in}(s)}$, which has $(n + m) \times (n - |N_{in}(s)|)$ exchange matrix $\tilde{E}$.
        \item Since the row corresponding to the sink $s$ is null in $\tilde{E}$, we may delete this row from $\tilde{E}$ to define an $(n + m - 1) \times (n - |N_{in}(s)|)$ matrix $\tilde{E}^{red}$.
        \item Next, we delete the column corresponding to $s$ from $\tilde{E}^{red}$ (i.e. we restrict the source of $\tilde{E}^{red}$ away from the span of $\mathbb{Z}e_s$). This produces an $(n + m - 1) \times (n - |N_{in}(s)| - 1)$ matrix $\tilde{E}^{-s}$, which is the exchange matrix of the seed $\Sigma^{-s} := (\tilde{E}^{-s}, \{d_i\}_{i \neq s})$ arising from deleting the vertex $s$ from $\Sigma^{\backslash N_{in}(s)}$.
        \item Finally, we quotient the target of $\tilde{E}^{-s}$ by $\mathbb{Z}\chi_s := \tilde{E}^{red}(\mathbb{Z}e_s)$, and by composing receive an $(n + m - 2) \times (n - |N_{in}(s)| - 1)$ matrix $\tilde{E}^{/s}$.
    \end{itemize}
\end{con}

\begin{defn}
     The \textit{deletion} $\Sigma^{/s}$ of $s$ from $\Sigma$ is the seed defined by \begin{equation}
         \Sigma^{/s} = (\tilde{E}^{/s}, \{d_i\}_{i \neq s}),
     \end{equation} where $\{d_i\}$ is the set of positive integers in the definition of $\Sigma$. We note that all of the quotients on the target in the above construction only affect the frozen parts of the corresponding seeds, and so $\Sigma^{/s}$ has the same mutable part as $\Sigma^{\backslash N_{in}(s) \cup \{s\}}$.
\end{defn}

\begin{rem}
    Given any RFR seed $\Sigma$ and $s \in \Gamma_\Sigma$, one may in fact \textit{always} form a ``deleted" seed $\Sigma^{/s}$. $\Sigma^{/s}$'s exchange matrix is defined by freezing the entire neighborhood $N(s)$ of $s$ in $\Gamma_\Sigma$, deleting the row of the exchange matrix $\tilde{E}$ of $\Sigma^{\backslash N(s)}$ corresponding to $s$, and then quotienting the source and target of the resulting matrix by $\mathbb{Z}e_s$ and $\mathbb{Z}\chi_s$. This generality is unnecessary in the present work except for in the following proposition.
\end{rem}

\begin{prop}\label{still rfr}
    Assume given a really full rank seed $\Sigma$ and $s \in \Gamma_\Sigma$. Then $\Sigma^{/s}$ (in the sense of the previous remark) is RFR. If $\Sigma$ is also sink-recurrent at $s$, then $\Sigma^{/s}$ is RFR and sink-recurrent.
\end{prop}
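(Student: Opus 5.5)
The plan is to read ``really full rank'' as a statement about the exchange matrix viewed as a linear map, and then follow each step of the Construction. View $\tilde{E}$ as a map $\mathbb{Z}^{V\setminus F}\to\mathbb{Z}^{V}$, sending $e_j$ to the $j$-th column. The rows of $\tilde{E}$ span $\mathbb{Z}^{n}$ exactly when the transpose is surjective. Equivalently, $\tilde{E}$ is a \emph{split} injection: it is injective with saturated image, so a $\mathbb{Z}$-linear retraction exists. I will check that each step of the construction of $\Sigma^{/s}$ (in the general sense of the Remark, with $N(s)$ in place of $N_{in}(s)$) preserves split injectivity. The inputs are the following.

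\textbf{Step 1 (freezing).} Freezing $N(s)$ preserves RFR, as the paper already uses freely. Concretely, restricting a split injection to the coordinate sublattice spanned by the remaining mutable $e_j$ is again split injective.

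\textbf{Step 2 (deleting row $s$).} In $\Sigma^{\backslash N(s)}$ the row indexed by $s$ is zero. Indeed its entries are $\tilde{E}_{sj}$ for mutable $j\notin N(s)$, and such $j$ is either $s$ itself (with $\tilde{E}_{ss}=0$) or not adjacent to $s$ in $\Gamma_\Sigma$. So the map factors through the coordinate sublattice omitting $s$. Composing with the projection onto that sublattice changes nothing, and $\tilde{E}^{red}$ is still split injective.

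\textbf{Step 3 (deleting column $s$).} Restricting the source of $\tilde{E}^{red}$ to the span of the $e_j$ with $j\neq s$ gives $\tilde{E}^{-s}$. This is again a restriction of a split injection to a direct summand, hence split injective.

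\textbf{Step 4 (quotienting by $\chi_s$).} Write $f=\tilde{E}^{red}:A\oplus\mathbb{Z}e_s\to C$, where $A$ is the span of the other mutable basis vectors. Since $f$ is split injective, $\chi_s=f(e_s)$ is primitive, so $C/\mathbb{Z}\chi_s$ is free of rank $n+m-2$. I then need the lemma that the induced map $\bar f:A\to C/\mathbb{Z}\chi_s$ is split injective. Given a retraction $r:C\to A\oplus\mathbb{Z}e_s$ of $f$, compose it with the projection to $A$. This kills $\chi_s$, so it descends to $C/\mathbb{Z}\chi_s$ and retracts $\bar f$. This shows $\tilde{E}^{/s}$ is split injective, i.e.\ $\Sigma^{/s}$ is RFR. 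I expect the only care needed to be here: the quotient target is free (primitivity of $\chi_s$), and the matrix $\tilde{E}^{/s}$ is written in a basis of that quotient. Changing basis does not affect row span.

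For the second assertion, let $\Sigma$ be sink-recurrent at $s$. Since $s$ is a sink, $N(s)=N_{in}(s)$, so the general construction agrees with the Construction in the text. By the Definition, $\Sigma^{/s}$ has the same mutable part (principal submatrix on mutable vertices) as $\Sigma^{\backslash N_{in}(s)\cup\{s\}}$, and that seed is sink-recurrent by hypothesis. I will then argue that sink-recurrence depends only on the mutable principal part, by induction on the recursive definition. Isolatedness and the sink condition are read off from $\Gamma_\Sigma$, which sees only mutable rows. Mutation of the principal part depends only on the principal part. Freezing a set $I$ changes the principal part by deleting the rows and columns indexed by $I$. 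Hence two seeds with equal principal parts satisfy the same clauses, and $\Sigma^{/s}$ is sink-recurrent, in addition to being RFR by the first part.
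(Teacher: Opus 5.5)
Your proposal is correct and follows the paper's proof. Both arguments identify RFR with $\tilde{E}$ being a split injection (injective with saturated image). Both check that this survives deleting the null row $s$ and passing to the quotient by $\mathbb{Z}\chi_s$: you build an explicit retraction $p_A\circ r$, while the paper splits the target into a complement of the image, $\mathbb{Z}\chi_s$, and the span of the remaining columns. Both then get sink-recurrence from $\Sigma^{\backslash N_{in}(s)\cup\{s\}}$, because sink-recurrence depends only on the mutable part. Your induction on the recursive definition states that last step more accurately than the paper's ``induction on the number of mutable vertices,'' since the mutation clause does not lower the vertex count.
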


\begin{proof}
     Since $\tilde{E}$ is an isomorphism onto a saturated sublattice of its target and deleting a null row from a matrix does not change its span, we see that $\tilde{E}^{red}$ is also an isomorphism onto a saturated sublattice of $\mathbb{Z}^{n + m - 1}$. This means that $\tilde{E}^{red}$ is a split injection, and so we may fix a decomposition of the target of $\tilde{E}^{red}$ into a direct sum \begin{equation}
        \mathbb{Z}^{n + m - 1} \cong C \oplus \mathbb{Z}\chi_s \oplus A
    \end{equation}
    where $C \cong coker(\tilde{E}^{red})$ and $\mathbb{Z}\chi_s \oplus A \cong im(\tilde{E}^{red})$, so that $A$ is the span of the columns of $\tilde{E}^{red}$ corresponding to $e_i$ with $i \neq s$. Therefore the quotient of $\mathbb{Z}^{n + m - 1}$ by $\mathbb{Z}\chi_s$ restricts to an isomorphism on $A$, and so if one composes $\tilde{E}^{red}$ with a quotient of its source by $\mathbb{Z}e_s$ and of its target by $\mathbb{Z}\chi_s$ then the resulting matrix induces an isomorphism onto a saturated sublattice of $\mathbb{Z}^{n + m - 2}$. But by the definition of $A$ this matrix is $\tilde{E}^{/s}$, and so $\Sigma^{/s}$ is really full rank.\\

    To see that $\Sigma^{/s}$ is sink-recurrent if $\Sigma$ is sink-recurrent at $s$, we note that sink-recurrence of a seed $\Sigma$ only depends on its mutable part. This follows from a standard induction on the number of mutable vertices in $\Sigma$. Therefore the sink-recurrence of $\Sigma^{/s}$ follows from that of $\Sigma^{\backslash N_{in}(s) \cup \{s\}}$, which is true by hypothesis.
\end{proof}

We assume from now on that $\Sigma$ is really full rank and sink-recurrent. The point we are approaching is that if $\Sigma$ is sink-recurrent at $s$, then $\mathcal{A}(\Sigma^{/s})$ is very closely related to $V(x_s) \subset \mathcal{A}(\Sigma)$. To demonstrate this we recall a bit more of the geometric structure of post-deletion seeds. Namely, assuming once again that $s$ is a sink in $\Sigma$, since $\mathcal{A}(\Sigma^{\backslash N_{in}(s)})$ is simply $\mathcal{A}(\Sigma^{-s})$ but with the vertex $s$ still present we see that \begin{equation}\label{conic eq}
    H^0(\mathcal{A}(\Sigma^{\backslash N_{in}(s)})) = H^0(\mathcal{A}(\Sigma^{-s}))[x_s, x'_s]/(x_sx'_s = \chi'_s + 1),
\end{equation}
where $\chi'_s$ denotes the non-trivial portion of the exchange binomial corresponding to $s$ (here we have used the agreement of cluster algebras with upper cluster algebras for sink-recurrent seeds, per \cite{MulAU}). In particular we deduce that $\mathcal{A}(\Sigma^{\backslash N_{in}(s)})$ admits an affine binomial fibration over $\mathcal{A}(\Sigma^{-s})$. We also see that $\mathcal{A}(\Sigma^{/s})$ can be identified with the subvariety of $\mathcal{A}(\Sigma^{-s})$ on which $\chi'_s + 1$ vanishes, since $\Sigma^{/s}$ is precisely the seed one gets from quotienting out $\chi_s$ from the frozen lattice of $\Sigma^{-s}$, and that $\mathcal{A}(\Sigma^{\backslash N_{in}(s) \cup \{s\}}) \cong \mathcal{A}(\Sigma^{-s}) \times \mathbb{G}_m$. The geometry of this situation is depicted in Figure \ref{conic fig}.\\

Now we may prove the following

\begin{prop}\label{vanishing locus is linear}
    Let $\Sigma$ be an RFR seed which is sink-recurrent at $s \in \Gamma_\Sigma$, and let $x_s$ be the corresponding cluster variable on $\mathcal{A}(\Sigma)$. Then $V(x_s) \subset \mathcal{A}(\Sigma)$ is isomorphic to $\mathbb{A}^1 \times \mathcal{A}(\Sigma^{/s})$.
\end{prop}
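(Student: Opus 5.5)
The plan is to use the open cover of $\mathcal{A}(\Sigma)$ recalled in Subsection \ref{subsection 4.1}: since $\Sigma$ is RFR and sink-recurrent at $s$, $\mathcal{A}(\Sigma)$ is covered by $\mathcal{A}(\Sigma^{\backslash s})$ and $\mathcal{A}(\Sigma^{\backslash N_{in}(s)})$, with overlap $\mathcal{A}(\Sigma^{\backslash N_{in}(s)\cup\{s\}})$. On $\mathcal{A}(\Sigma^{\backslash s})$ the variable $x_s$ is frozen, hence invertible. So $V(x_s)$ does not meet $\mathcal{A}(\Sigma^{\backslash s})$ and is entirely contained in the open chart $\mathcal{A}(\Sigma^{\backslash N_{in}(s)})$. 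The first step is therefore to reduce to computing the closed subscheme $V(x_s)\subset \mathcal{A}(\Sigma^{\backslash N_{in}(s)})$. This is legitimate because $V(x_s)$ is closed in $\mathcal{A}(\Sigma)$ and contained in that open set, so it coincides with its scheme-theoretic intersection with the chart.

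Next I would use the presentation \eqref{conic eq}:
\begin{equation*}
H^0(\mathcal{A}(\Sigma^{\backslash N_{in}(s)})) = H^0(\mathcal{A}(\Sigma^{-s}))[x_s,x_s']/(x_sx_s'-\chi_s'-1).
\end{equation*}
Here $\mathcal{A}(\Sigma^{\backslash N_{in}(s)})$ is affine, being a cluster variety whose cluster algebra equals the upper cluster algebra by \cite{MulAU}. Setting $x_s=0$ gives
\begin{equation*}
H^0(V(x_s)) \cong \bigl(H^0(\mathcal{A}(\Sigma^{-s}))/(\chi_s'+1)\bigr)[x_s'].
\end{equation*}
Geometrically, $V(x_s)$ is $\mathbb{A}^1$ (with coordinate $x_s'$) times the locus $V(\chi_s'+1)\subset\mathcal{A}(\Sigma^{-s})$.

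The final step, and the main obstacle, is to identify $V(\chi_s'+1)\subset \mathcal{A}(\Sigma^{-s})$ with $\mathcal{A}(\Sigma^{/s})$, including the scheme structure, which must be reduced. The idea is that $\chi_s'$ is a Laurent monomial in the frozen variables of $\Sigma^{-s}$. Indeed, $s$ is a sink, so all of its mutable neighbours lie in $N_{in}(s)$, which are frozen in $\Sigma^{-s}$; and its exponent vector is $\chi_s=\tilde{E}^{red}(e_s)$. By Proposition \ref{still rfr} the vector $\chi_s$ is primitive (it spans a saturated sublattice, part of a basis adapted to $C\oplus\mathbb{Z}\chi_s\oplus A$). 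I would therefore change coordinates on the frozen torus $T=\Hom(\mathbb{Z}^{F},\mathbb{G}_m)$ so that $\chi_s'$ becomes a coordinate $t$. Then $\chi_s'+1=0$ amounts to $t=-1$, a smooth reduced hypersurface in the frozen torus isomorphic to the torus of the quotient lattice. Because the frozen lattice is split in this way, the cluster variety of $\Sigma^{-s}$ should be a family over the frozen torus in which fixing the frozen character $\chi_s$ to a value yields exactly the cluster variety with frozen lattice quotiented by $\mathbb{Z}\chi_s$, namely $\mathcal{A}(\Sigma^{/s})$. To make this rigorous I would compare seed tori and their mutation gluings chart by chart. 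Restricting to $\chi_s'=-1$ commutes with each mutation, since exchange binomials only involve the images of exchange vectors in the quotient lattice, and $A$ maps isomorphically. Alternatively, I would check the identification on the level of (upper) cluster algebras using the RFR and locally acyclic hypotheses. The sign $-1$ requires care: I would absorb it by rescaling a frozen coordinate, or by noting that the specialization $\chi_s=-1$ is isomorphic to the specialization $\chi_s=1$ via a torus translation.
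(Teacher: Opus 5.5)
Your proposal is correct and follows the paper's route. You confine $V(x_s)$ to the chart $\mathcal{A}(\Sigma^{\backslash N_{in}(s)})$, use \eqref{conic eq} to exhibit $V(x_s)$ as $\mathbb{A}^1$ times $V(\chi'_s+1)\subset\mathcal{A}(\Sigma^{-s})$ (reading the product directly off the ring, where the paper cites triviality of $\mathbb{A}^1$-torsors over affine bases), and then identify $V(\chi'_s+1)$ with $\mathcal{A}(\Sigma^{/s})$, a step the paper simply asserts and which you treat more carefully.

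For the sign, use your torus-translation option, by $w(-1)$ for some $w$ with $w^{T}\tilde{E}^{-s}=0$ and $\langle w,\chi_s\rangle=1$. Such a $w$ exists by the RFR splitting $C\oplus\mathbb{Z}\chi_s\oplus A$. Rescaling a single frozen variable by $-1$ would not work, because it breaks every exchange binomial in which that variable appears to an odd power.
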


\begin{proof}
    To start we note that $V(x_s) \subset \mathcal{A}(\Sigma^{\backslash N_{in}(s)}) \subset \mathcal{A}(\Sigma)$, because $s$ being a sink and $x_s = 0$ implies that all of the cluster variables corresponding to vertices incident to $s$ are invertible.\\

    Therefore we may consider $V(x_s x'_s)$ as the reduced preimage of $V(\chi'_s + 1)$ under the morphism from $\mathcal{A}(\Sigma^{\backslash N_{in}(s)})$ to $\mathcal{A}(\Sigma^{-s})$ described above. Setting $x'_s = 0$ allows $x_s$ to vary freely (and vice-versa), and so we see that the two irreducible components $V(x_s)$ and $V(x'_s)$ of this preimage are relative $\mathbb{A}^1$'s. As we have noted, $V(\chi'_s + 1)$ is precisely $\mathcal{A}(\Sigma^{/s})$; and affine space torsors over affine schemes are all trivial. This completes the proof.
    \end{proof}

    \begin{rem} A previous version of the argument given here used the fact that if $s$ is an isolated vertex of any seed $\Sigma$, then every cluster torus of $\mathcal{A}(\Sigma)$ is contained in $D(x_s) \cup D(x'_s)$, and $\mathcal{A}(\Sigma) \setminus (D(x_s) \cup D(x'_s))$ has codimension at least $2$. This observation seems likely to be useful elsewhere.
    \end{rem}

    \begin{figure}
        \centering
        \includegraphics[width=.7\linewidth]{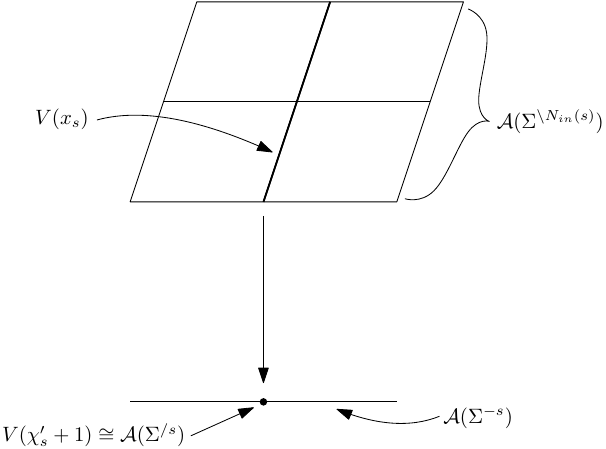}
        \caption{An illustration of the geometric argument in Equation \ref{conic eq} and Proposition \ref{vanishing locus is linear}. If an RFR seed $\Sigma$ is sink-recurrent at $s$, then $\mathcal{A}(\Sigma^{\backslash N_{in}(s)})$ fibers over $\mathcal{A}(\Sigma^{-s})$. The vanishing locus of $\chi'_s + 1 = 0$ is $\mathcal{A}(\Sigma^{/s})$, and the reduced preimage of this locus is exactly $V(x_s) \cup V(x'_s)$.}
        \label{conic fig}
    \end{figure}

\begin{thm}\label{RFR LINEAR THM}
    A really full rank sink-recurrent cluster variety $\mathcal{A}(\Sigma)$ is a linear scheme over any field $k$.
\end{thm}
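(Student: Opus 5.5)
We argue by induction on the number $n$ of mutable vertices of $\Sigma$, proving the slightly stronger statement that $\mathbb{A}^j \times \mathcal{A}(\Sigma)$ is linear for every $j \geq 0$ and every RFR sink-recurrent $\Sigma$ with $n$ mutable vertices. Since all the constructions involved (cluster variables, exchange binomials, the isomorphism of Proposition \ref{vanishing locus is linear}) are defined over $\mathbb{Z}$, the whole argument runs over an arbitrary field $k$.

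\textbf{Base case and reduction to a sink.} If $n = 0$, or more generally if $\Sigma$ is isolated, we use the RFR hypothesis. When $n = 0$, $\mathcal{A}(\Sigma)$ is a torus $\mathbb{G}_m^m$. A torus is linear, since $\mathbb{G}_m = \mathbb{A}^1 \setminus \{0\}$ and products of linear schemes with $\mathbb{A}^j$ are linear: for this, apply the two-out-of-three property factorwise, which respects products with $\mathbb{A}^j$. For the inductive step we may replace $\Sigma$ by a mutation-equivalent seed, since this does not change $\mathcal{A}(\Sigma)$. Unwinding the recursive definition of sink-recurrence, we may then assume that either $\Sigma$ is isolated with $n \geq 1$, or $\Sigma$ is sink-recurrent at some sink $s$. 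An isolated mutable vertex is a sink with $N_{in}(s) = \emptyset$. Moreover, freezing it leaves an isolated seed, hence a sink-recurrent one. So the isolated case is subsumed in the sink case.

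\textbf{Inductive step.} Fix a sink $s$ at which $\Sigma$ is sink-recurrent, and stratify $\mathcal{A}(\Sigma)$ by the closed subscheme $V(x_s)$ and its open complement $D(x_s)$. The complement $D(x_s)$ is the cluster variety $\mathcal{A}(\Sigma^{\backslash s})$, the standard fact that freezing $s$ inverts $x_s$. This seed is RFR, since freezing preserves RFR, and it is sink-recurrent by hypothesis. It has $n-1$ mutable vertices, so it is linear by induction, as is its product with any $\mathbb{A}^j$. By Proposition \ref{vanishing locus is linear}, $V(x_s) \cong \mathbb{A}^1 \times \mathcal{A}(\Sigma^{/s})$. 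Proposition \ref{still rfr} shows that $\Sigma^{/s}$ is RFR and sink-recurrent. It has the same mutable part as $\Sigma^{\backslash N_{in}(s) \cup \{s\}}$, hence at most $n-1$ mutable vertices. So $\mathbb{A}^{j+1} \times \mathcal{A}(\Sigma^{/s})$ is linear by the strengthened induction hypothesis. The two-out-of-three property, applied to $\mathbb{A}^j \times V(x_s) \hookrightarrow \mathbb{A}^j \times \mathcal{A}(\Sigma)$, then shows that $\mathbb{A}^j \times \mathcal{A}(\Sigma)$ is linear.

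\textbf{Main obstacle.} The step needing the most care is the identification $D(x_s) \cong \mathcal{A}(\Sigma^{\backslash s})$ as schemes, not merely as open subsets of a cluster torus cover. This rests on the equality $A = U$ of the cluster algebra with the upper cluster algebra for locally acyclic seeds \cite{MulAU}, together with the fact that localizing at a frozen variable commutes with forming the cluster algebra. A second point is that $V(x_s)$ with its reduced structure agrees with the scheme-theoretic vanishing locus. This holds because $\mathcal{A}(\Sigma)$ is smooth and $x_s$ is irreducible, being a cluster variable in a factorial-enough setting. In any case linearity only depends on the underlying reduced structure via the strata, so either structure would do.
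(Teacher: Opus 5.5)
Your proof is correct and follows the paper's argument. You induct on the number of mutable vertices, mutate to a seed that is sink-recurrent at $s$, and stratify $\mathcal{A}(\Sigma) = D(x_s) \sqcup V(x_s) \cong \mathcal{A}(\Sigma^{\backslash s}) \sqcup (\mathbb{A}^1 \times \mathcal{A}(\Sigma^{/s}))$ via Propositions \ref{vanishing locus is linear} and \ref{still rfr}; your strengthened $\mathbb{A}^j \times \mathcal{A}(\Sigma)$ hypothesis and explicit handling of isolated seeds only make precise two points the paper leaves implicit (that $\mathbb{A}^1$ times a linear scheme is linear, and that an isolated seed with $n \geq 1$ is sink-recurrent at any vertex).
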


\begin{proof}
    By induction on the number of vertices in $\Gamma_\Sigma$. If there are none then $\mathcal{A}(\Sigma)$ is a (split) torus, and so is certainly linear.\\

    In general, let us assume that we have mutated $\Sigma$ so that it is sink-recurrent at some vertex $s \in \Gamma_\Sigma$. Then by the previous proposition we learn that \begin{equation}
        \mathcal{A}(\Sigma) = D(x_s) \sqcup V(x_s) \cong \mathcal{A}(\Sigma^{\backslash s}) \sqcup (\mathbb{A}^1 \times \mathcal{A}(\Sigma^{/s})).
    \end{equation}

    Both of the two cluster varieties appearing on the right hand side have fewer mutable vertices than $\Sigma$, and so they are both linear by the induction hypothesis. Therefore the right hand side is a stratification of $\mathcal{A}(\Sigma)$ into linear schemes, and so it is itself linear.
\end{proof}

\begin{cor}\label{sink-recurrent MT}
    The compactly supported motive of a really full rank sink-recurrent cluster variety $\mathcal{A}(\Sigma)$ is mixed Tate.
\end{cor}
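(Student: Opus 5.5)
The corollary follows from combining Theorem \ref{RFR LINEAR THM} with Proposition \ref{linear implies MT}. We fix a number field $k$ (or any field, since Proposition \ref{linear implies MT} holds over any field). Because cluster varieties are defined over $\mathbb{Z}$, the base change $\mathcal{A}(\Sigma)_k$ makes sense, and we work with that.

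First, Theorem \ref{RFR LINEAR THM} shows that $\mathcal{A}(\Sigma)_k$ is a linear scheme over $k$. Second, Proposition \ref{linear implies MT} shows that every linear scheme over $k$ has compactly supported motive in $DMT(k)$. Together these give $M^c(\mathcal{A}(\Sigma)) \in DMT(k)$, which is the claim.

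The one point to check is that the stratification in the proof of Theorem \ref{RFR LINEAR THM} really happens over $k$, not just over $\mathbb{C}$. The ingredients are:
\begin{itemize}
\item the identification $D(x_s) \cong \mathcal{A}(\Sigma^{\backslash s})$;
\item the isomorphism $V(x_s) \cong \mathbb{A}^1 \times \mathcal{A}(\Sigma^{/s})$ of Proposition \ref{vanishing locus is linear};
\item the base case, that a split torus $\mathbb{G}_m^N$ is linear.
\end{itemize}
All of these are defined over $\mathbb{Z}$: they come from the presentation in Equation \ref{conic eq} and from the triviality of $\mathbb{A}^1$-torsors over affine bases, which holds over any base ring. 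So each ingredient base changes to $k$.

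If one prefers not to rely on the linearity theorem being uniform in $k$, the same conclusion follows by inducting directly on the number of mutable vertices, using Lemma \ref{2-of-3 mixed tate} at each step:
\begin{itemize}
\item Open piece: $M^c(\mathcal{A}(\Sigma^{\backslash s}))$ is mixed Tate by induction.
\item Closed piece: $M^c(\mathbb{A}^1 \times \mathcal{A}(\Sigma^{/s})) \cong M^c(\mathcal{A}(\Sigma^{/s}))(1)[2]$, which is mixed Tate by induction together with Proposition \ref{still rfr}.
\end{itemize}
The excision triangle for $V(x_s) \hookrightarrow \mathcal{A}(\Sigma)$ then shows that $M^c(\mathcal{A}(\Sigma))$ is mixed Tate. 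I do not expect any real obstacle here.
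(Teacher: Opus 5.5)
Your proposal is correct and matches the paper, which gives no separate proof and obtains the corollary directly from Theorem \ref{RFR LINEAR THM} together with Proposition \ref{linear implies MT}. Your alternative induction via Lemma \ref{2-of-3 mixed tate} is the same stratification argument carried out at the level of motives, and your remark that the stratification is defined over $\mathbb{Z}$, and so works over any field $k$, is accurate.
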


    

In particular this means that any (double) braid variety $X(\beta)$, and therefore any open Richardson variety $R_{u,w}^\circ$ in a flag variety of arbitrary type, is linear over any field $k$. 
Since any Richardson variety $R_{u,w}$, i.e. closure of a Richardson cell $R_{u,w}^\circ$ inside of the flag variety, is stratified into a disjoint union of open Richardson varieties, this implies the following.

\begin{cor}\label{first linear}
    A Richardson variety $R_{u,w}$ is linear over any field $k$.
\end{cor}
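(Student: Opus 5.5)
The plan is to exhibit $R_{u,w}$ as a finite disjoint union of locally closed pieces, each an open Richardson variety, and then apply the two-out-of-three closure property defining linear schemes inductively. First recall the standard stratification: the Richardson variety $R_{u,w} = \overline{X^u}\cap \overline{X_w}$ (intersection of a Schubert and opposite Schubert variety in $G/B$, taken over $\mathbb{Z}$ and then base changed to $k$) decomposes as
\begin{equation*}
R_{u,w} = \bigsqcup_{u \le u' \le w' \le w} R^\circ_{u',w'},
\end{equation*}
obtained by intersecting the Bruhat and opposite Bruhat decompositions. Each $R^\circ_{u',w'}$ is a double braid variety, hence an RFR sink-recurrent cluster variety by Theorem 4.10 and Corollary 6.7 of \cite{galashin2026braid}, and therefore linear over $k$ by Theorem \ref{RFR LINEAR THM}. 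Since cluster varieties and their stated cluster structures are defined over $\mathbb{Z}$, this identification holds after base change to any field $k$.

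Next, I would order the strata so that the decomposition is a genuine filtration by closed subschemes. Choose a linear extension of the Bruhat order on the interval $\{(u',w') : u\le u'\le w'\le w\}$ by decreasing $\ell(w')-\ell(u')$, and at each stage remove a stratum that is open in the remaining closed union. Concretely, $R^\circ_{u,w}$ is open in $R_{u,w}$ with complement $Z=\bigcup R_{u',w'}$ over the pairs with $(u',w')\neq(u,w)$ in the interval, and $Z$ is again a closed union of Richardson strata; the closure relation $\overline{R^\circ_{u',w'}} = R_{u',w'}$ ensures each remaining union is closed. Then, by induction on the number of strata, each closed union $Z$ is linear: peeling off a top-dimensional stratum $U$ (open in $Z$) leaves a closed union $Z'$ with fewer strata, which is linear by induction, while $U$ is linear by the above, so $Z$ is linear by the two-out-of-three property. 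The base case is a single stratum, or the empty scheme.

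The main (mild) obstacle is justifying that the stratification exists and has the closure properties over an arbitrary field $k$, not just $\mathbb{C}$. I would handle this by working with the split reductive group over $\mathbb{Z}$: Bruhat decompositions and closure relations of Schubert cells hold over any field. Moreover, the identification of $R^\circ_{u',w'}$ with a double braid variety carrying its cluster structure is defined over $\mathbb{Z}$, as the paper notes of cluster varieties. Reducedness is not an issue, since linearity is insensitive to nilpotents: the excision triangles only see the underlying reduced schemes.
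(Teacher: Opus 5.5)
Your proposal is correct and is the paper's own argument. You stratify $R_{u,w}$ into open Richardson varieties $R^\circ_{u',w'}$, each linear by Theorem \ref{RFR LINEAR THM} via the RFR sink-recurrent cluster structures of \cite{galashin2026braid}, and conclude by the two-out-of-three property; your dimension-ordered peeling of open strata just makes explicit the step the paper leaves implicit.

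One input you share with the paper without proof is that $R^\circ_{u',w'}$ is identified with such a cluster variety over an arbitrary field $k$. The cited cluster structures are proved for complex groups, and the paper only remarks that cluster varieties themselves are defined over $\mathbb{Z}$. So your sentence saying the identification is defined over $\mathbb{Z}$ is an assertion you are adopting, not something the paper establishes.
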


Similarly, by composing with the projection $\pi: G/B \rightarrow G/P$ and setting
\begin{equation}
    \Pi_{u,w} := \pi(R_{u,w}),
\end{equation} we may apply the results of \cite{MR3176610} (which in particular show that $\Pi_{u,w}$ is stratified into a disjoint union of isomorphic images of $R_{u',w'}^\circ$'s) to deduce the following

\begin{cor}
    A projected Richardson variety $\Pi_{u,w}$ is a linear scheme over any field $k$.
\end{cor}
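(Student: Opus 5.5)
The corollary follows from Theorem \ref{first linear}'s analogue for projected Richardsons, once we know how $\Pi_{u,w}$ is stratified. I will use the results of \cite{MR3176610} in the following form. The projected Richardson variety $\Pi_{u,w} = \pi(R_{u,w})$ is a finite disjoint union of locally closed pieces (open projected Richardson varieties). Each piece is the isomorphic image under $\pi$ of some open Richardson variety $R_{u',w'}^\circ$, where $(u',w')$ ranges over suitable ($P$-Bruhat) pairs.

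The steps run in order as follows.

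\begin{enumerate}
\item Fix such a stratification $\Pi_{u,w} = \bigsqcup_j S_j$ with $S_j \cong R^\circ_{u_j,w_j}$.
\item Order the strata compatibly with closure, so that at each stage we may remove a closed stratum. Concretely, choose a filtration by closed subschemes $\emptyset = Z_0 \subset Z_1 \subset \cdots \subset Z_N = \Pi_{u,w}$ with each $Z_{k} \setminus Z_{k-1}$ a single stratum. Such a filtration exists because the strata are indexed by a finite poset and the closure of each stratum is a union of strata.
\item Each stratum is an open Richardson variety, hence a double braid variety. By the remark after Corollary \ref{sink-recurrent MT} and Theorem \ref{RFR LINEAR THM}, each stratum is therefore linear over any field $k$.
\item Induct on $k$. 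Since $Z_{k-1} \hookrightarrow Z_k$ is closed with linear complement, and $Z_{k-1}$ is linear by induction, the two-out-of-three property in the definition of linear schemes shows that $Z_k$ is linear.
\end{enumerate}

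The main subtlety is fieldwise. Theorem \ref{RFR LINEAR THM} holds over any $k$, but the stratification of \cite{MR3176610} and the isomorphisms $R^\circ_{u',w'} \xrightarrow{\sim} \pi(R^\circ_{u',w'})$ are stated over an algebraically closed field or $\mathbb{C}$. So we need that everything is defined over $\mathbb{Z}$ (or over the prime field) and base changes. I would argue as follows. The Richardson and projected Richardson strata, and the map $\pi$, are defined over $\mathbb{Z}$ via split reductive group schemes. The isomorphism $R^\circ_{u',w'} \to \pi(R^\circ_{u',w'})$ is a morphism of finite type schemes over $k$ that becomes an isomorphism after passage to $\bar k$. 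By fpqc descent, it is then already an isomorphism over $k$.

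Similarly, the closedness of each $Z_k$ and the fact that the $S_j$ cover $\Pi_{u,w}$ can be checked after base change to $\bar k$. This holds because the strata are defined over $k$ and the relevant properties (being a closed immersion, being a set-theoretic partition) descend. With these checks in place, the induction above concludes.
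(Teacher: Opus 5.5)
Your proposal is correct and follows the paper's argument. The paper likewise cites \cite{MR3176610} to stratify $\Pi_{u,w}$ into isomorphic images of open Richardson varieties, uses their linearity as RFR sink-recurrent double braid varieties, and concludes because linear schemes are closed under stratification. Your closure-compatible filtration with two-out-of-three induction, and your descent from the prime field to an arbitrary $k$, spell out details that the paper leaves implicit.
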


\begin{rem}
    By Theorem \ref{mixed Tate vs. ckgp} the previous two corollaries imply that all Richardson and projected Richardson varieties in arbitrary type have the CKgP, exhibiting an interesting class of non-smooth varieties which enjoy it.
\end{rem}

Restricting to braid varieties in type $A$, this establishes that all brick varieties (in the sense of \cite{MR3512647}) are linear, because by \cite{MR5110495} said varieties are stratified into a disjoint union of braid varieties.

\begin{cor}\label{last linear}
    A brick variety brick($\beta$) is linear over any field $k$.
\end{cor}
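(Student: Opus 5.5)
We prove Corollary \ref{last linear} by exhibiting a finite stratification of $\text{brick}(\beta)$ into locally closed pieces, each isomorphic to a braid variety, and then combining Theorem \ref{RFR LINEAR THM} with the two-out-of-three property in the definition of linear schemes. The stratification comes from \cite{MR5110495}: $\text{brick}(\beta)$ is a disjoint union of locally closed subvarieties indexed by certain subwords (or Demazure-product-preserving subexpressions) $\beta'$ of $\beta$, and each stratum is isomorphic to a braid variety $X(\beta')$. Each such $X(\beta')$ admits an RFR sink-recurrent cluster structure (Corollary 5.8 and Theorem 5.9 of \cite{galashin2022braid}). Theorem \ref{RFR LINEAR THM} therefore shows that each stratum is linear over $k$.

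The key step is to arrange the strata so that the two-out-of-three property can be applied inductively. I would order the strata compatibly with the closure relation, i.e. by inclusion of subwords, which should match the face poset of the subword complex. I would then choose a stratum $S$ that is closed in $\text{brick}(\beta)$, meaning minimal in this order. Its complement $U=\text{brick}(\beta)\setminus S$ is open and is a union of the remaining strata. More generally, I would induct on the number of strata with the following statement: any union $Y$ of strata that is locally closed and is a union of a down-closed family minus another down-closed family is linear. For such a $Y$, I pick a stratum $S$ that is closed in $Y$. Then $S$ is linear by the previous paragraph and $Y\setminus S$ is linear by induction, so $Y$ is linear by the closure axiom. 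Taking $Y=\text{brick}(\beta)$ finishes the proof.

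The main obstacle is a bookkeeping issue about definitions. The stratification in \cite{MR5110495} is stated over $\mathbb{C}$, while Corollary \ref{last linear} is claimed over an arbitrary field $k$. I would handle this by checking that both the brick variety and the stratification are defined over $\mathbb{Z}$: the brick variety is cut out by configurations of flags with relative positions given by the letters of $\beta$, and the strata are defined by conditions on which consecutive flags coincide. Both are equations with integer coefficients. One must also check that the isomorphisms of the strata with $X(\beta')$, which are given by explicit braid matrix coordinates, are defined over $\mathbb{Z}$. Theorem \ref{RFR LINEAR THM} already holds over any $k$, because cluster varieties are defined over $\mathbb{Z}$. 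With this checked, base change to $k$ preserves the stratification and the argument above goes through verbatim.
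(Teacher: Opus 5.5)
Your proposal is correct and is essentially the paper's own argument. The paper simply cites the stratification of brick$(\beta)$ into braid varieties from \cite{MR5110495}, applies Theorem~\ref{RFR LINEAR THM} to each stratum, and uses that linear schemes are closed under stratifications. Your closure-compatible ordering of the strata and your field-of-definition check just make explicit what the paper leaves implicit; the one thing your check over $\mathbb{Z}$ should also cover is that each $X(\beta')_k$ is an RFR sink-recurrent cluster variety over $k$, which the paper likewise takes for granted.
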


\subsection{Splitness of RFR Sink-Recurrent Cluster Varieties}\label{subsection 4.3}

In this subsection we show that RFR sink-recurrent cluster varieties have motives which are simpler still: they are all split. This provides a motivic lift of the second part of the main theorem of \cite{lam2022cohomology}. The key lemma for showing this is the following statement about motives.

\begin{prop}
       Assume given $M \in MT(\mathbb{Q})$ such that $R^H(M)$ has split weight filtration, i.e. $R^H(M)$ decomposes as a direct sum of Tate twists. Then $M$ is split in the sense of Definition \ref{Mixed Tate}.
\end{prop}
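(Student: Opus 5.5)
We will argue by induction on the length of the weight filtration of $M$, using two facts recalled in Section \ref{section 2}: over $\mathbb{Q}$ the realization $R^H$ is faithful, exact and weight-filtration preserving, and it induces injections on $\text{Ext}^1$ groups. The aim is to show that every extension among the graded pieces of the weight filtration of $M$ splits in $MT(\mathbb{Q})$, because its realization splits in $MHS_\mathbb{Q}$.

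Let $2n$ be the largest weight occurring in $M$. The short exact sequence
\begin{equation}
0 \rightarrow W_{2n-2}M \rightarrow M \rightarrow gr^W_{2n}M \rightarrow 0
\end{equation}
in $MT(\mathbb{Q})$ has quotient $gr^W_{2n}M \cong \mathbb{Q}(-n)^{\oplus a}$. It defines a class $e \in \text{Ext}^1_{MT(\mathbb{Q})}(gr^W_{2n}M, W_{2n-2}M)$. Applying $R^H$ gives the sequence $0 \rightarrow W_{2n-2}R^H(M) \rightarrow R^H(M) \rightarrow gr^W_{2n}R^H(M) \rightarrow 0$, by exactness and compatibility with weights. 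Since $R^H(M)$ is a direct sum of Tate twists, this realized sequence splits. More precisely, $R^H(M) = \bigoplus \mathbb{Q}(-j)^{\oplus a_j}$ with its canonical weight filtration, so $W_{2n-2}$ is a direct summand. Hence $R^H(e) = 0$, and injectivity of $R^H$ on $\text{Ext}^1$ gives $e = 0$. Therefore $M \cong W_{2n-2}M \oplus \mathbb{Q}(-n)^{\oplus a}$.

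Next, $R^H(W_{2n-2}M) = W_{2n-2}R^H(M)$ is again a direct sum of Tate twists, since it is a summand of a split object. So $W_{2n-2}M$ satisfies the hypothesis and has shorter weight filtration, and induction finishes the argument. The base case, where only one weight occurs, holds by the description of $gr^W$ in $MT(k)$ as a direct sum of $\mathbb{Q}(-n)$'s.

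The main obstacle is the step ``$R^H$ injective on $\text{Ext}^1$.'' The paper states this, so we may invoke it. We also need it for $\text{Ext}^1$ between arbitrary objects rather than just between Tate twists, here between $\mathbb{Q}(-n)^{\oplus a}$ and $W_{2n-2}M$. If only the Tate-twist version is available, we would reduce to it by a secondary induction on the weights of $W_{2n-2}M$. Applying the long exact $\text{Ext}$ sequence to $0 \rightarrow W_{2n-4} \rightarrow W_{2n-2} \rightarrow gr_{2n-2} \rightarrow 0$, the five lemma would transfer injectivity. This uses heredity of both categories, so that $\text{Ext}^2 = 0$, together with faithfulness of $R^H$ on $\Hom$'s to handle the connecting maps.
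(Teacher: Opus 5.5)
Your proposal is correct and follows the paper's proof. Both induct on the length of the weight filtration, realize the top-weight extension $0 \rightarrow W_{2n-2}M \rightarrow M \rightarrow gr^W_{2n}M \rightarrow 0$, note that it splits in $MHS_\mathbb{Q}$, and conclude from injectivity of $R^H$ on $\text{Ext}^1$. You also make explicit something the paper leaves tacit, namely that $W_{2n-2}M$ again satisfies the hypothesis.

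One minor point concerns your fallback reduction to Tate twists. There the four-lemma step needs surjectivity (fullness), not faithfulness, on the terms $\Hom(\mathbb{Q}(-n), gr^W_{2j}M)$ with $j<n$ that precede the connecting maps. This holds trivially, since these groups vanish in both categories for weight reasons.
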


\begin{proof}
    By induction on the length of the weight filtration $W$ of $M$. The base case is when $W$ has length $1$, in which case $M \cong \mathbb{Q}(i)^{\oplus r}$ for some $i$ and $r$ by definition.\\
    
    Now suppose that $W$ has length $\ell$. Then one receives a short exact sequence \begin{equation}
        0 \rightarrow W^{\ell-1}(M) \rightarrow M \rightarrow gr_\ell^W(M) \rightarrow 0
\end{equation}\noindent in $MT(\mathbb{Q})$. By induction, to see that $M$ is split it suffices to show that the class $\gamma$ of the above extension in $\text{Ext}^1(gr_\ell^W(M), W^{\ell-1}(M))$ is $0$.\\

    Since $R^H$ is exact we see that the image of the above SES is again an SES in $MHS_\mathbb{Q}$, and functoriality dictates that $R^H(\gamma) \in \text{Ext}^1(R^H(gr_\ell^W(M)), R^H(W^{\ell-1}(M)))$ is the extension class corresponding to this new SES. But this extension splits by assumption. Therefore $R^H(\gamma) = 0$, and the injectivity of $R^H$ on $\text{Ext}$'s implies that $\gamma = 0$, i.e. our original SES splits.
\end{proof}

\begin{thm}
    Let $X$ be a mixed Tate variety defined over $\mathbb{Q}$, and assume that for every $i$ the weight filtration on $H_i^{BM}(X(\mathbb{C}); \mathbb{Q})$ is split over $\mathbb{Q}$. Then $X_k$ is split over any number field $k$.
\end{thm}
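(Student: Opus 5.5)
We would prove the theorem by reducing it to the preceding proposition over $\mathbb{Q}$ and then base changing. Since $X$ is mixed Tate over $\mathbb{Q}$, each compactly supported motivic homology object $H_i^{M,BM}(X)$ lies in the heart $MT(\mathbb{Q})$ of the Levine $t$-structure. The first step is to identify its Hodge realization: by the discussion in Section \ref{section 2}, the exact functor $R^H = R^H_{\mathbb{Q},\sigma}$ sends $H_i^{M,BM}(X)$ to $H_i^{BM}(X(\mathbb{C});\mathbb{Q})$ with its weight and Hodge filtrations, compatibly with weights. The hypothesis says this mixed Hodge structure has $\mathbb{Q}$-split weight filtration.

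Because $X$ has mixed Tate cohomology, each graded piece $gr^W_{-2j}H_i^{BM}(X(\mathbb{C});\mathbb{Q})$ is pure of type $(-j,-j)$. Concretely, it is the realization of $gr^W_{-2j}H_i^{M,BM}(X)$, which is a sum of copies of $\mathbb{Q}(j)$, and $R^H$ sends $\mathbb{Q}(j)$ to $\mathbb{Q}(j)$. So splitness of the weight filtration means $R^H(H_i^{M,BM}(X))$ is a direct sum of Tate twists. This is exactly the hypothesis of the preceding proposition.

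That proposition, which rests on injectivity of $R^H$ on $\text{Ext}^1$ in $MT(\mathbb{Q})$, then gives that each $H_i^{M,BM}(X)$ is a direct sum of Tate twists $\mathbb{Q}(j)$. Hence $X$ is split over $\mathbb{Q}$ in the sense of Definition \ref{Mixed Tate}.

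The remaining step is base change along $\mathbb{Q}\subset k$. The base change functor $DM(\mathbb{Q})\to DM(k)$ is exact and symmetric monoidal, sends $\mathbb{Q}(j)\mapsto\mathbb{Q}(j)$ and $M^c(X)\mapsto M^c(X_k)$, and therefore maps $DMT(\mathbb{Q})$ into $DMT(k)$. In particular $X_k$ is mixed Tate. We need the cohomology objects of $M^c(X_k)$ for the Levine $t$-structure to be the base changes of the $H_i^{M,BM}(X)$, that is, that base change is $t$-exact on $DMT$.

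We expect this $t$-exactness to be the main obstacle. To establish it, we would argue that base change sends $\mathbb{Q}(j)$, which lies in the heart, to an object in the heart. Since it is exact, it then sends iterated extensions of Tate twists, and hence all of $MT(\mathbb{Q})$, into $MT(k)$. An exact functor between triangulated categories with bounded, nondegenerate $t$-structures generated by their hearts, which carries the heart into the heart, is $t$-exact; this applies because the Levine $t$-structure on $DMT$ is bounded. The truncation triangles for $M^c(X)$ therefore map to truncation triangles for $M^c(X_k)$, so $H_i^{M,BM}(X_k)$ is the base change of $H_i^{M,BM}(X)$. The latter is a finite direct sum of Tate twists, and a direct sum of $\mathbb{Q}(j)$'s base changes to a direct sum of $\mathbb{Q}(j)$'s. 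Thus $X_k$ is split over $k$.
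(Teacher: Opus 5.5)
Your proof is correct. Over $\mathbb{Q}$ it matches the paper's: you apply the preceding proposition to each $H_i^{M,BM}(X)$, whose Hodge realization is $H_i^{BM}(X(\mathbb{C});\mathbb{Q})$.

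You diverge at the base-change step. The paper avoids $t$-exactness entirely. It uses the vanishing of $\mathrm{Ext}^2$ between Tate twists (Levine's computation) to write $M^c(X) \cong \bigoplus_i H_i^{M,BM}(X)[i]$. After the $\mathbb{Q}$-case, this makes $M^c(X)$ a finite direct sum of shifted Tate twists $\mathbb{Q}(j)[n]$. Base change then visibly gives a direct sum of shifted Tate twists over $k$. Since the $\mathbb{Q}(j)$ lie in the heart over $k$, the cohomology objects of $M^c(X_k)$ can be read off directly.

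You instead show that base change $DMT(\mathbb{Q}) \to DMT(k)$ is $t$-exact. You use the correct fact that an exact functor between bounded $t$-structures that carries heart into heart is $t$-exact, since $D^{\le 0}$ and $D^{\ge 0}$ are extension-closed and every object is a finite iterated extension of its shifted cohomology objects. You then transport the splitting of each $H_i^{M,BM}(X)$ along base change.

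Each route buys something different:
\begin{itemize}
\item The paper's route is shorter. It also gives the slightly stronger conclusion that the entire motive $M^c(X_k)$, not just each cohomology object, is a sum of shifted Tate twists.
\item Your route does not need heredity of $MT(\mathbb{Q})$, only boundedness of the Levine $t$-structure. It also proves the more general fact that $H_i^{M,BM}(X_k)$ is the base change of $H_i^{M,BM}(X)$ for every mixed Tate $X$ over $\mathbb{Q}$, split or not.
\end{itemize}
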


\begin{proof}
If $k = \mathbb{Q}$ then the claim follows from the previous proposition and the assumption that each $H_i^{BM}(X(\mathbb{C}))$ is split. For general $k$ the theorem then follows from the facts that $M^c(X)$ is isomorphic to the direct sum of its cohomologies (by Equation \ref{MT hereditary}), that $M^c(X_k) \in DMT(k)$ is the pullback of $M^c(X)$ under the base change functor from $DMT(\mathbb{Q})$ to $DMT(k)$, and that Tate twists pull back to Tate twists.
\end{proof}

\begin{thm}\label{sink-recurrent split}
    A really full rank sink-recurrent cluster variety $\mathcal{A}(\Sigma)$ is split over any number field $k$.
\end{thm}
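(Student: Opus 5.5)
The plan is to apply the theorem immediately preceding this one, with $X = \mathcal{A}(\Sigma)$. That theorem has three hypotheses: $X$ is defined over $\mathbb{Q}$, $X$ is mixed Tate, and the weight filtrations on $H_i^{BM}(X(\mathbb{C});\mathbb{Q})$ are split over $\mathbb{Q}$. I will check these in turn.

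First, cluster varieties are defined over $\mathbb{Z}$, so $\mathcal{A}(\Sigma)$ has a model over $\mathbb{Q}$, and its base change to $k$ is $\mathcal{A}(\Sigma)_k$. Second, by Theorem \ref{RFR LINEAR THM} the model $\mathcal{A}(\Sigma)_{\mathbb{Q}}$ is linear. Proposition \ref{linear implies MT} then makes it mixed Tate, which is Corollary \ref{sink-recurrent MT} over $\mathbb{Q}$.

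Third, I need the splitting of the weight filtration on Borel--Moore homology. Proposition \ref{sink-recurrent CHL} gives $\mathbb{Q}$-split weight filtrations on the ordinary cohomology $H^*(\mathcal{A}(\Sigma)(\mathbb{C});\mathbb{Q})$. Since $\mathcal{A}(\Sigma)$ is smooth (it is locally acyclic, by \cite{muller2013locally}) and irreducible of dimension $d$, Poincaré duality applies. It gives an isomorphism of mixed Hodge structures
\[
H_i^{BM}(X(\mathbb{C});\mathbb{Q}) \cong H^{2d-i}(X(\mathbb{C});\mathbb{Q}) \otimes \mathbb{Q}(d),
\]
where $\mathbb{Q}(d)$ is the Tate structure. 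Tensoring with a one-dimensional pure Tate structure and passing through this duality both preserve a splitting of the weight filtration into a direct sum of Tate twists. So each $H_i^{BM}$ is split, the hypotheses hold, and the preceding theorem shows $\mathcal{A}(\Sigma)_k$ is split for every number field $k$.

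The step needing most care is this duality transfer. I must check that Proposition \ref{sink-recurrent CHL} gives splitting as mixed Hodge structures, meaning each $H^j$ is a direct sum of copies of $\mathbb{Q}(-p)$, and not merely a vector-space splitting. This is what the mixed Tate cohomology statement together with \cite{lam2022cohomology}, Theorem 3.3, is meant to provide. If only a $\mathbb{Q}$-linear splitting compatible with $W$ were available, I would combine it with the vanishing of $H^{j,(p,q)}$ for $p \neq q$: a mixed Tate Hodge structure whose weight filtration splits compatibly with the Hodge filtration is a direct sum of Tate twists.

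It also matters that the splitness criterion is applied to the motive over $\mathbb{Q}$, where $R^H$ is injective on $\mathrm{Ext}^1$. That is exactly why the argument passes through the $\mathbb{Q}$-model before base changing to $k$.
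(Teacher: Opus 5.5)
Your proposal is correct and is essentially the paper's own proof: the paper also applies the preceding theorem to the $\mathbb{Q}$-model of $\mathcal{A}(\Sigma)$. It gets mixed Tateness from linearity, and gets the split weight filtrations on $H_i^{BM}$ by transferring the $\mathbb{Q}$-splitting of $H^*(\mathcal{A}(\Sigma)(\mathbb{C});\mathbb{Q})$ from the curious Lefschetz/Lam--Speyer proposition through Poincar\'e duality, exactly as you do.
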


\begin{proof}
    By Proposition \ref{sink-recurrent CHL}, Corollary \ref{sink-recurrent MT}, and Poincar\'e duality on $\mathcal{A}(\Sigma)(\mathbb{C})$, the conditions of the previous theorem apply to $H_i^{BM}(\mathcal{A}(\Sigma)(\mathbb{C});\mathbb{Q})$.
\end{proof}

For any prime $\ell$ there is an \'etale realization functor $F_\ell$ from $DMT(k)$ to $\ell$-adic representations of $Gal(\bar{k}/k)$ which sends $\mathbb{Q}(i)$ to $\mathbb{Q}_\ell(i)$, the one-dimensional representation where $Gal(\bar{k}/k)$ acts via the $i$-th cyclotomic character, and which sends $M_c(X)$ to the Galois representation $H_*^{BM}(X_{\bar{k}}, \mathbb{Q}_\ell)$. By utilizing these realizations we learn the following

\begin{cor}
    Fix a number field $k$ and an RFR sink-recurrent cluster variety $\mathcal{A}(\Sigma)$. Then, for any prime $\ell$ and $i \geq 0$, the Galois representation $Gal(\bar{k}/k) \curvearrowright H_c^{i, \acute{e}t}(\mathcal{A}(\Sigma)_{\overline{k}}; \mathbb{Q}_\ell)$ splits into a direct sum of tensor powers of the cyclotomic character. The number of terms and the characters which appear in $H_c^{i, \acute{e}t}(\mathcal{A}(\Sigma)_{\bar{k}}; \mathbb{Q}_\ell)$ are independent of $\ell$.
\end{cor}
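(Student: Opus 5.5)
The plan is to combine Theorem \ref{sink-recurrent split} with the compatibility of realization functors. By that theorem, over the number field $k$, $M^c(\mathcal{A}(\Sigma)_k)$ lies in $DMT(k)$. Moreover, each compactly supported motivic homology object $H_i^{M,BM}(\mathcal{A}(\Sigma)_k)$ is a finite direct sum $\bigoplus_j \mathbb{Q}(j)^{\oplus r_{i,j}}$. By Equation \ref{MT hereditary} and heredity of $MT(k)$, $M^c$ is the direct sum of its shifted cohomology objects. Hence
\begin{equation}
M^c(\mathcal{A}(\Sigma)_k) \cong \bigoplus_{i,j} \mathbb{Q}(j)^{\oplus r_{i,j}}[i].
\end{equation}
This decomposition exists before any realization is applied, so the multiplicities $r_{i,j}$ are intrinsic to the motive.

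Next, apply the étale realization $F_\ell$. It is exact and additive, and it sends $\mathbb{Q}(j)$ to $\mathbb{Q}_\ell(j)$, so the image is $\bigoplus_{i,j}\mathbb{Q}_\ell(j)^{\oplus r_{i,j}}[i]$. Since $F_\ell(M^c(X)) = H_*^{BM}(X_{\bar k};\mathbb{Q}_\ell)$, taking the $i$-th homology gives $H_i^{BM}(\mathcal{A}(\Sigma)_{\bar k};\mathbb{Q}_\ell) \cong \bigoplus_j \mathbb{Q}_\ell(j)^{\oplus r_{i,j}}$ as Galois representations. This is a direct sum of tensor powers of the cyclotomic character, with $r_{i,j}$ independent of $\ell$.

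It remains to pass from Borel--Moore homology to $H_c^{i}$. Over an algebraically closed field, $H_c^i(X;\mathbb{Q}_\ell)$ is the $\mathbb{Q}_\ell$-linear dual of $H_i^{BM}(X;\mathbb{Q}_\ell)$, and this duality is Galois-equivariant for the contragredient action. Dualizing turns $\mathbb{Q}_\ell(j)$ into $\mathbb{Q}_\ell(-j)$, which is again a power of the cyclotomic character. So $H_c^{i}$ splits as $\bigoplus_j \mathbb{Q}_\ell(-j)^{\oplus r_{i,j}}$, and its terms and characters are determined by the $\ell$-independent data $r_{i,j}$. Alternatively, since $\mathcal{A}(\Sigma)$ is smooth (\cite{muller2013locally}) and has some pure dimension $d$, one can use Poincaré duality $H_c^i \cong H^{2d-i}{}^\vee(-d)$. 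This is not needed, however.

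I expect the main obstacle to be bookkeeping of indexing and sign conventions: matching the homological indexing of $H_i^{M,BM}$ with cohomological $H_c^i$, and checking that $F_\ell$ really sends $M^c$ to Borel--Moore homology in a way compatible with the $t$-structure and its cohomology objects. I would handle this as in the Hodge case in Section \ref{section 2}. Since both $MT(k)$ and the relevant target category behave like hereditary categories on objects in the image, $F_\ell$ commutes with taking cohomology objects, and this yields the degreewise identification used above.
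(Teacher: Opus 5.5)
Your proposal is correct and follows the paper's argument. The paper likewise obtains the corollary by applying the \'etale realization $F_\ell$, which sends $\mathbb{Q}(j)$ to $\mathbb{Q}_\ell(j)$ and $M^c(X)$ to $H_*^{BM}(X_{\bar{k}};\mathbb{Q}_\ell)$, to the split motive from Theorem \ref{sink-recurrent split}, and $\ell$-independence likewise comes from the multiplicities being intrinsic to the motive. Your explicit dualization from $H_i^{BM}$ to $H_c^i$ (sending $\mathbb{Q}_\ell(j)$ to $\mathbb{Q}_\ell(-j)$) is a detail the paper leaves implicit. Your closing appeal to heredity of the target category is unnecessary: additivity of $F_\ell$, applied to the already-split motive, gives the degreewise identification directly.
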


Similarly, using splitness, Equations \ref{MT hereditary} and \ref{MT} imply that the higher Chow groups of a really full rank sink-recurrent cluster variety $\mathcal{A}(\Sigma)$ over a number field $k$ are remarkably simple; in particular, they decompose into sums of mixed Hodge pieces of the cohomology of $\mathcal{A}(\Sigma)$ and the higher Chow groups of $k$. We leave the interested reader to work out the details, and restrict comments in this (and related) directions to the following remark.

\begin{rem}
   Subsection \ref{subsection 4.2} suffices to show that really full rank sink-recurrent cluster varieties are \textit{stably cellular} over any field $k$, meaning that the compactly supported motivic spectra associated to them in Voevodsky's category $SH(k)$ of motivic spectra over $k$ are constructed from spheres by iterated pushouts.\\
    
    That linearity implies stable cellularity was already commented by Totaro in \cite{totaro2016motive}. We stress this connection, however, because we expect that the Chow vanishing theorems we establish in the next section for RFR sink-recurrent cluster varieties can serve as an input to computational tools which are available for cellular schemes and which can potentially compute complicated motivic invariants of RFR sink-recurrent cluster varieties. An example of the type of results we have in mind is given in the following
    
    \begin{cor}
        $K_0(\mathcal{A}(\Sigma)) = \mathbb{Z}$ for any RFR sink-recurrent cluster variety.
    \end{cor}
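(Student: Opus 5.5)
We prove $K_0(\mathcal{A}(\Sigma)) = \mathbb{Z}$ by induction on the number of mutable vertices, following the stratification used in Theorem \ref{RFR LINEAR THM}. Since $\mathcal{A}(\Sigma)$ is smooth (it is locally acyclic, so \cite{muller2013locally} applies), $K_0$ of vector bundles agrees with $G_0$ of coherent sheaves. We therefore work with $G_0$ throughout, where the localization sequence is available for any closed immersion $Z \hookrightarrow X$ with open complement $U$:
\begin{equation}
G_0(Z) \rightarrow G_0(X) \rightarrow G_0(U) \rightarrow 0.
\end{equation}

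\textbf{Base case.} If $\Gamma_\Sigma$ has no vertices, $\mathcal{A}(\Sigma)$ is a split torus $\mathbb{G}_m^r$. Homotopy invariance together with the localization sequence for $\mathbb{G}_m \subset \mathbb{A}^1$ shows that $G_0(\mathbb{G}_m^r)$ is a quotient of $G_0(\mathbb{A}^r) = \mathbb{Z}$. The rank map splits this quotient, so $G_0(\mathbb{G}_m^r) = \mathbb{Z}$.

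\textbf{Inductive step.} After mutating, assume $\Sigma$ is sink-recurrent at $s$, and take $Z = V(x_s)$ and $U = D(x_s)$. By Proposition \ref{vanishing locus is linear}, $Z \cong \mathbb{A}^1 \times \mathcal{A}(\Sigma^{/s})$, and $U \cong \mathcal{A}(\Sigma^{\backslash s})$. By Proposition \ref{still rfr}, $\Sigma^{/s}$ is RFR and sink-recurrent, and so is $\Sigma^{\backslash s}$ (freezing preserves RFR, and sink-recurrence of $\Sigma^{\backslash s}$ is part of the hypothesis). Both seeds have fewer mutable vertices than $\Sigma$, so the induction hypothesis and homotopy invariance give $G_0(Z) \cong G_0(\mathcal{A}(\Sigma^{/s})) = \mathbb{Z}$, generated by $[\mathcal{O}_Z]$, and $G_0(U) = \mathbb{Z}$.

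\textbf{Killing the boundary class.} The step requiring care is showing that the image of $G_0(Z)$ in $G_0(\mathcal{A}(\Sigma))$ vanishes, which is what prevents $G_0(\mathcal{A}(\Sigma))$ from having rank two. Since $\mathcal{A}(\Sigma)$ is integral and $Z$ is the scheme-theoretic vanishing locus of the nonzerodivisor $x_s$, we have a short exact sequence
\begin{equation}
0 \rightarrow \mathcal{O} \xrightarrow{\cdot x_s} \mathcal{O} \rightarrow \mathcal{O}_{V(x_s)} \rightarrow 0,
\end{equation}
so $[\mathcal{O}_{V(x_s)}] = 0$ in $G_0(\mathcal{A}(\Sigma))$. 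It remains to check that $V(x_s)$ is reduced, so that $\mathcal{O}_{V(x_s)} = \mathcal{O}_Z$. This follows from the description in Equation \ref{conic eq}: on $\mathcal{A}(\Sigma^{\backslash N_{in}(s)})$, which contains $V(x_s)$, the ring $H^0(\mathcal{A}(\Sigma^{-s}))[x_s, x'_s]/(x_s x'_s - \chi'_s - 1)$ modulo $x_s$ is $H^0(\mathcal{A}(\Sigma^{/s}))[x'_s]$, which is reduced. Hence the map $G_0(Z) \rightarrow G_0(\mathcal{A}(\Sigma))$ is zero, and $G_0(\mathcal{A}(\Sigma)) \cong G_0(U) = \mathbb{Z}$, detected by rank.

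An alternative route would use stable cellularity (as in the preceding remark) together with the Chow vanishing of Theorem \ref{chow/cohom theorem} and the Riemann--Roch or coniveau filtration. That approach only yields the result rationally, or requires torsion control, so the direct localization argument above is the cleaner one.
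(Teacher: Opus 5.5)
Your proof is correct, but it takes a different route from the paper's. The paper deduces the corollary in one line from Theorem \ref{chow/cohom theorem}. The graded pieces of the topological (coniveau) filtration on $G_0 = K_0$ are quotients of the Chow groups via $[V] \mapsto [\mathcal{O}_V]$. So integral vanishing of $CH_i$ for $i<d$ leaves only a quotient of $CH_d = \mathbb{Z}$, and the rank map shows this quotient is $\mathbb{Z}$.

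You instead rerun the freezing--deletion induction directly in $G_0$ and kill the boundary class with the principal divisor of $x_s$. This makes the result independent of the Chow theorem, and in particular of \cite{cao2024valuation}. In fact the same observation, $\mathrm{div}(x_s) = [V(x_s)]$, would kill the image of $CH_{d-2}(\mathcal{A}(\Sigma^{/s}))$ in $CH_{d-1}(\mathcal{A}(\Sigma))$. It would therefore also remove the need for Cao--Keller--Qin in the paper's Chow induction. The paper's route is shorter once the Chow theorem is in hand, and it makes visible that Chow vanishing alone controls $K_0$.

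The price of your route is that everything hinges on $V(x_s)$ being reduced \emph{scheme-theoretically}. The paper's proof of Proposition \ref{vanishing locus is linear}, phrased via a ``reduced preimage,'' does not make this explicit, so you should justify it; this is exactly where RFR enters. By the splitting in the proof of Proposition \ref{still rfr}, $\mathrm{coker}(\tilde{E}^{-s})$ is free and $[\chi_s]$ is primitive in it. Hence there is a cocharacter $\lambda$ of the grading torus with $\langle [\chi_s], \lambda \rangle = 1$, and $\lambda$ gives $\mathcal{A}(\Sigma^{-s}) \cong \mathbb{G}_m \times (\chi'_s)^{-1}(1)$. Consequently every fibre of $\chi'_s$, in particular $V(\chi'_s+1)$, is isomorphic to $(\chi'_s)^{-1}(1)$. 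The coordinate ring of that fibre is a subring of the domain $H^0(\mathcal{A}(\Sigma^{-s}))$, so the fibre is integral.

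Finally, your closing claim that the coniveau route works only rationally is wrong. The surjection $CH_i \twoheadrightarrow gr_i G_0$ holds with integral coefficients (Fulton, \emph{Intersection Theory}, \S 15.1), so the paper's deduction needs no torsion control.
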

    
    \begin{proof}
        This follows from Theorem \ref{chow/cohom theorem}, because the associated graded pieces of the coniveau filtration on $K_0$ are subquotients of Chow groups $CH_i(\mathcal{A}(\Sigma))$.
    \end{proof}
    
    In turn, we hope that combining computations of motivic invariants with tools like Theorems \ref{cycle class iso} and \ref{strong property} can be used to deduce further representation theoretic results, as in works like \cite{barkley2026combinatorial}. For a survey on stable cellularity and its consequences we refer the reader to \cite{Hlavinka-REU}.
\end{rem}

\section{Homological and Chow Vanishing for Cluster Varieties}\label{section 5}


By the top cohomology of a smooth affine complex variety $X$ of dimension $d$ we will mean the \textit{$d$-th} cohomology of $X$ rather than the $2d$-th. This is justified by the Artin vanishing theorem. Throughout this section we will work over $\mathbb{C}$, albeit with varieties which we have shown to be split over $\mathbb{Q}$, and so only write (e.g.) $\mathcal{A}(\Sigma)$ instead of $\mathcal{A}(\Sigma)(\mathbb{C})$.\\

Before proving the general theorem, as a warmup we indicate an application of Theorem \ref{cycle class iso} which upgrades results which are already in the literature to a rational Chow and cohomological vanishing theorem. Theorem \ref{chow/cohom theorem} will dispense with the assumptions of acyclicity, skew-symmetry, and rationality made in the following discussion.\\

By the ``main corollary" (Corollary 1.4) of \cite{LSII} it is known that any acyclic, RFR and skew-symmetric cluster variety $\mathcal{A}(Q)$ has top cohomology satisfying $h^d(\mathcal{A}(Q)) = 1$ and \begin{equation}\label{LS cor}
    H^d(\mathcal{A}(Q)) = H^{d, (d, d)}(\mathcal{A}(Q)).
\end{equation} Further, curious Poincar\'e symmetry implies that \begin{equation}
    H^d(\mathcal{A}(Q)) \cong \bigoplus_{\frac{d}{2} \leq i \leq d} H^{d, (i, i)}(\mathcal{A}(Q)) \cong \bigoplus_{2i \leq d} H^{2i, (i, i)}(\mathcal{A}(Q)),
\end{equation} \noindent where said property applies to show that $H^{2i, (i, i)}(\mathcal{A}(Q)) \cong H^{d, (d - i, d - i)}(\mathcal{A}(Q))$. Therefore Theorem \ref{cycle class iso} allows us to immediately deduce the following

\begin{prop}
    The rational Chow groups $A_i$ of a $d$-dimensional acyclic, RFR and skew-symmetric cluster variety $\mathcal{A}(Q)$ vanish away from top degree, i.e. \begin{equation}
        A_i(\mathcal{A}(Q)) = \begin{cases}  0 & i < d \\
    \mathbb{Q} & i = d.
   \end{cases}
    \end{equation}
\end{prop}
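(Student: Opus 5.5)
The argument combines four earlier inputs: the result of \cite{LSII} recalled in Equation \ref{LS cor}, curious Poincar\'e symmetry, Poincar\'e duality, and Theorem \ref{cycle class iso}. First we need a number field to work over. A cluster variety is defined over $\mathbb{Z}$, so $\mathcal{A}(Q)$ has a model over $\mathbb{Q}$. By Theorem \ref{RFR LINEAR THM} (acyclic seeds are sink-recurrent) this model is linear, hence mixed Tate by Corollary \ref{sink-recurrent MT}. Theorem \ref{cycle class iso} with $k=\mathbb{Q}$ then gives, for every $i$,
\[
A_i(\mathcal{A}(Q)) \cong gr_{-2i}H_{2i}^{BM}(\mathcal{A}(Q)(\mathbb{C});\mathbb{Q}).
\]

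The second step translates the right-hand side into ordinary cohomology. The complex variety $\mathcal{A}(Q)(\mathbb{C})$ is smooth of complex dimension $d$, since locally acyclic varieties are smooth by \cite{muller2013locally}. Poincar\'e duality $H_{j}^{BM}\cong H^{2d-j}(-d)$, compatible with mixed Hodge structures, then gives
\[
gr_{-2i}H^{BM}_{2i}\cong gr_{2(d-i)}H^{2(d-i)} = H^{2(d-i),(d-i,d-i)}(\mathcal{A}(Q)).
\]
Setting $j=d-i$ turns the claim into a statement about pure cohomology: $H^{2j,(j,j)}$ vanishes for $j>0$ and is $\mathbb{Q}$ for $j=0$. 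The case $j=0$, i.e. $i=d$, is immediate because $\mathcal{A}(Q)$ is irreducible, so $H^0=\mathbb{Q}$, which is pure of type $(0,0)$.

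The third step, for $j>0$, uses curious Poincar\'e symmetry (Proposition \ref{sink-recurrent CHL}):
\[
h^{2j,(j,j)}=h^{d,(d-j,d-j)}.
\]
By Equation \ref{LS cor} all of $H^d$ is concentrated in type $(d,d)$. Hence $H^{d,(d-j,d-j)}=0$ whenever $d-j\neq d$, that is, whenever $j\neq 0$. This gives $A_i=0$ for all $i<d$. For $2j>d$ the vanishing also follows independently from Artin vanishing, which kills $H^{2j}$ in that range.

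I expect the main obstacle to be the bookkeeping of indices and twists rather than any new idea. One point is that Poincar\'e duality must send the lowest weight of $H^{BM}_{2i}$ to the weight-$2(d-i)$ part of $H^{2(d-i)}$, which is the pure part. The other is that curious Poincar\'e symmetry must be applied with the indexing displayed just before the proposition. A further point to check is that Theorem \ref{cycle class iso} is applied to the $\mathbb{Q}$-model while the Hodge-theoretic inputs are about $\mathcal{A}(Q)(\mathbb{C})$. This is consistent because the realization for $\sigma:\mathbb{Q}\hookrightarrow\mathbb{C}$ computes exactly the cohomology of the complex points.
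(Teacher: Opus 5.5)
Your proposal is correct and follows the paper's own proof. It uses the same chain: Equation \ref{LS cor}, then curious Poincar\'e symmetry, then Poincar\'e duality, then Theorem \ref{cycle class iso} (with mixed Tateness from Corollary \ref{sink-recurrent MT}), traversed in the opposite order, with irreducibility handling $i=d$. One small slip: the duality isomorphism is $H^{BM}_j \cong H^{2d-j}(d)$ rather than $H^{2d-j}(-d)$, though the weight matching you actually carry out, $gr_{-2i}H^{BM}_{2i}\cong H^{2(d-i),(d-i,d-i)}$, is the right one.
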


\begin{proof}
    Equation \ref{LS cor}, i.e. Corollary 1.4 of \cite{LSII}, shows that $H^{d, (i, i)}(\mathcal{A}(Q)) = 0$ for all $i < d$. Therefore curious Poincar\'e symmetry shows that $H^{2i, (i, i)}(\mathcal{A}(Q))$ vanishes for all $i > 0$. Since $\mathcal{A}(Q)$ is smooth, $H^{2i, (i, i)}(\mathcal{A}(Q))$ is dual to $H_{2d -2i, (i - d, i - d)}^{BM}(\mathcal{A}(Q))$; and by Theorem \ref{cycle class iso} and Corollary \ref{sink-recurrent MT} the latter group is isomorphic to $A_{d-i}(\mathcal{A}(Q))$. The $i = d$ case follows from irreducibility of $\mathcal{A}(Q)$.
\end{proof}

The main theorem of this section is a deduction of the same Chow vanishing \textit{integrally} and for \textit{all} RFR sink-recurrent cluster varieties. The case of $CH_{d-1}(\mathcal{A}(\Sigma))$ is due to \cite{cao2024valuation}, using the fact that RFR cluster varieties are primitive.\footnote{As far as we can tell this hasn't been observed in the literature, but it is true by definition.}

\begin{thm}\label{chow/cohom theorem}
    For any $d$-dimensional RFR sink-recurrent cluster variety $\mathcal{A}(\Sigma)$,
     \begin{equation}
        CH_i(\mathcal{A}(\Sigma)) = \begin{cases}  0 & i < d \\
    \mathbb{Z} & i = d.
   \end{cases} \end{equation} Therefore $h^d(\mathcal{A}(\Sigma)) = 1$ and $
    H^d(\mathcal{A}(\Sigma)) = H^{d, (d, d)}(\mathcal{A}(\Sigma)).$
\end{thm}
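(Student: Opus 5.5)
We argue by induction on the number of mutable vertices of $\Sigma$, using the stratification from Subsection \ref{subsection 4.2} together with the localization exact sequence for integral Chow groups. In the base case $\Gamma_\Sigma$ has no vertices, so $\mathcal{A}(\Sigma)$ is a split torus $\mathbb{G}_m^d$. Its Chow groups are $\mathbb{Z}$ in degree $d$ and vanish otherwise. This follows by repeatedly applying localization to $\mathbb{A}^1 = \{0\} \sqcup \mathbb{G}_m$ and its products: the pushforward $CH_{i}(\mathbb{G}_m^{d-1}) \to CH_i(\mathbb{A}^1\times\mathbb{G}_m^{d-1})$ is followed by a surjection onto $CH_i(\mathbb{G}_m^d)$, and the homotopy invariance isomorphism $CH_{i}(\mathbb{A}^1 \times Y) \cong CH_{i-1}(Y)$ kills everything in degrees $i<d$ by induction.

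For the inductive step, mutate $\Sigma$ (mutation does not change $\mathcal{A}(\Sigma)$) so that it is sink-recurrent at some $s$. By Proposition \ref{vanishing locus is linear} we have the closed–open decomposition $V(x_s) \cong \mathbb{A}^1 \times \mathcal{A}(\Sigma^{/s})$ with open complement $D(x_s) \cong \mathcal{A}(\Sigma^{\backslash s})$. The localization sequence reads
\[
CH_i(\mathbb{A}^1 \times \mathcal{A}(\Sigma^{/s})) \to CH_i(\mathcal{A}(\Sigma)) \to CH_i(\mathcal{A}(\Sigma^{\backslash s})) \to 0 .
\]
Each side term is handled as follows.

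\begin{enumerate}
\item $\Sigma^{\backslash s}$ is RFR, since freezing preserves this, and sink-recurrent by hypothesis. It has fewer mutable vertices and the same dimension $d$, so induction gives $CH_i(\mathcal{A}(\Sigma^{\backslash s}))=0$ for $i<d$.
\item $\Sigma^{/s}$ is RFR and sink-recurrent by Proposition \ref{still rfr}, and has fewer mutable vertices. Since $V(x_s)$ is a divisor, $\dim \mathcal{A}(\Sigma^{/s}) = d-2$. Homotopy invariance gives $CH_i(\mathbb{A}^1\times \mathcal{A}(\Sigma^{/s})) \cong CH_{i-1}(\mathcal{A}(\Sigma^{/s}))$, which by induction vanishes unless $i-1 = d-2$.
\end{enumerate}

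Hence $CH_i(\mathcal{A}(\Sigma))=0$ for all $i<d-1$, and $CH_d(\mathcal{A}(\Sigma)) = \mathbb{Z}$ by irreducibility.

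The remaining degree $i=d-1$ is the main obstacle. The image of $CH_{d-1}(V(x_s))\cong \mathbb{Z}$ is generated by the fundamental class $[V(x_s)]$, which a priori need not vanish. However, $V(x_s)$ is the divisor of the regular function $x_s$, so $[V(x_s)] = \mathrm{div}(x_s) = 0$ in $CH_{d-1}(\mathcal{A}(\Sigma))$. Two points need checking:
\begin{enumerate}
\item $x_s$ vanishes to order exactly one along $V(x_s)$. This holds because $V(x_s)$ is the reduced irreducible component in the conic picture $x_s x'_s = \chi'_s + 1$, and $\chi'_s+1$ cuts out a reduced locus since $\mathcal{A}(\Sigma^{/s})$ is smooth.
\item $V(x_s)$ is irreducible, which holds because $\mathcal{A}(\Sigma^{/s})$ is a variety.
\end{enumerate}
Alternatively, one can cite \cite{cao2024valuation} for $CH_{d-1}=0$.

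Finally, the cohomological clauses follow. By Corollary \ref{sink-recurrent MT} and Theorem \ref{cycle class iso}, the rational vanishing gives $gr_{-2i}H^{BM}_{2i}=0$ for $i<d$. Poincaré duality on the smooth variety $\mathcal{A}(\Sigma)$ turns this into $H^{2j,(j,j)}=0$ for $j>0$. Curious Poincaré symmetry (Proposition \ref{sink-recurrent CHL}) then gives $H^{d,(d-j,d-j)} = 0$ for $j>0$. Since $H^d$ is mixed Tate with weights at least $d$, this forces $H^d = H^{d,(d,d)} \cong H^{0,(0,0)} = \mathbb{Q}$.
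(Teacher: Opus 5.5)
Your proof is correct, and its skeleton matches the paper's. You use the same induction on mutable vertices, the same decomposition $\mathcal{A}(\Sigma)=D(x_s)\sqcup V(x_s)\cong \mathcal{A}(\Sigma^{\backslash s})\sqcup(\mathbb{A}^1\times\mathcal{A}(\Sigma^{/s}))$ from Proposition \ref{vanishing locus is linear}, and the same right exact localization sequence, which kills $CH_i$ for $i<d-1$. The cohomological clauses are deduced as in the paper, only with more detail.

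The genuine difference is the degree $i=d-1$. The paper disposes of it by citing \cite{cao2024valuation}: primitive full rank cluster varieties have vanishing Picard groups, combined with the footnoted claim that RFR implies primitive. You instead observe that the only class that can survive is the image of $CH_{d-1}(V(x_s))\cong\mathbb{Z}$, namely $[V(x_s)]$, and that this is the principal divisor $\mathrm{div}(x_s)$.

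Your route keeps the induction self-contained and integral and uses nothing beyond the stratification already built. It also explains why the Picard group vanishes here: each new boundary divisor is cut out with multiplicity one by a cluster variable. The paper's citation is shorter and covers all primitive full rank cluster varieties, not only sink-recurrent ones, but it imports much heavier machinery and relies on the unproved footnote.

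One step needs a better justification. Smoothness of the reduced variety $\mathcal{A}(\Sigma^{/s})$ does not by itself stop $\chi'_s+1$ from vanishing to higher order; think of $f=g^2$ with $V(g)$ smooth. Use smoothness of $\mathcal{A}(\Sigma)$ instead.
\begin{itemize}
\item Work on the chart $\mathrm{Spec}\,R[x_s,x'_s]/(x_sx'_s-f)$, which contains $V(x_s)$. Here $R=H^0(\mathcal{A}(\Sigma^{-s}))$ is smooth and $f=\chi'_s+1$.
\item Every $q\in V(f)$ gives a point $(q,0,0)$ of this chart, where the differential of $x_sx'_s-f$ is $-df(q)$.
\item The Jacobian criterion then forces $df(q)\neq 0$, so $R/(f)$ is smooth and hence reduced.
\item Therefore the scheme-theoretic zero locus $V(x_s)=\mathrm{Spec}\,(R/(f))[x'_s]$ is reduced and irreducible.
\end{itemize}
So $x_s$ is a uniformizer at the generic point of $V(x_s)$, and $\mathrm{div}(x_s)=[V(x_s)]$ exactly rather than up to a multiplicity. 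This is what the integral statement needs, since a multiplicity $m>1$ would only show that $[V(x_s)]$ is $m$-torsion.
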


\begin{proof}
Since RFR sink-recurrent cluster varieties are mixed Tate, by Theorem \ref{cycle class iso} and curious Poincar\'e symmetry the second statement follows from the first. To prove the first we induct on the number of mutable vertices in $\Sigma$; when there are none then $\mathcal{A}(\Sigma)$ is a split torus and the theorem is certainly true.\\

In general we may mutate to assume that $\Sigma$ is sink-recurrent at a sink $s$. Then by Proposition \ref{vanishing locus is linear}, we may decompose $\mathcal{A}(\Sigma)$ as \begin{equation}
    \mathcal{A}(\Sigma) = D(x_s) \sqcup V(x_s) \cong \mathcal{A}(\Sigma^{\backslash s}) \sqcup (\mathbb{A}^1 \times \mathcal{A}(\Sigma^{/s})).
\end{equation}
Therefore the excision sequence in Chow homology takes the form of a right exact sequence \begin{equation}
    CH_{i}(\mathbb{A}^1 \times \mathcal{A}(\Sigma^{/s})) \rightarrow CH_i(\mathcal{A}(\Sigma)) \rightarrow CH_i(\mathcal{A}(\Sigma^{\backslash s})) \rightarrow 0,
\end{equation}
or equivalently
\begin{equation}
    CH_{i-1}(\mathcal{A}(\Sigma^{/s})) \rightarrow CH_i(\mathcal{A}(\Sigma)) \rightarrow CH_i(\mathcal{A}(\Sigma^{\backslash s})) \rightarrow 0.
\end{equation}
The outer terms are Chow groups of cluster varieties of seeds with fewer mutable vertices than $\Sigma$ by construction, with $\mathcal{A}(\Sigma^{/s})$ being $(d - 2)$-dimensional and $\mathcal{A}(\Sigma^{\backslash s})$ being $d$-dimensional. Since both have fewer mutable vertices than $\Sigma$ we see that the outer groups vanish for all $i < {d-1}$ by induction, and so $CH_i(\mathcal{A}(\Sigma)) = 0$ in the same range. Further, since RFR cluster varieties are primitive, $CH_{d-1}(\mathcal{A}(\Sigma)) = 0$ by the main theorem of \cite{cao2024valuation}. The final case that $CH_d(\mathcal{A}(\Sigma)) = \mathbb{Z}$ follows from irreducibility of the variety in question.
\end{proof}

In particular we learn the following

\begin{cor}
    The integral Chow groups of a (double) braid variety $X(\beta)$ or open Richardson variety $R_{u,w}^\circ$ in any Lie type vanish away from top (homological) degree.
\end{cor}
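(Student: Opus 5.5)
The corollary is a specialization of Theorem \ref{chow/cohom theorem}, so the plan is to verify its hypotheses for the varieties in question and then read off the conclusion. For a (double) braid variety $X(\beta)$ I would invoke Theorem 4.10 and Corollary 6.7 of \cite{galashin2026braid}. In type $A$ one can also use Corollary 5.8 and Theorem 5.9 of \cite{galashin2022braid}. Either way, $X(\beta) \cong \mathcal{A}(\Sigma)$ for some seed $\Sigma$ that is really full rank and sink-recurrent. For open Richardson varieties $R_{u,w}^\circ$ in arbitrary Lie type, I would use that they are special cases of double braid varieties, as recalled in Subsection \ref{subsection 4.1}. They therefore inherit the same RFR sink-recurrent cluster structure.

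With these hypotheses in hand, Theorem \ref{chow/cohom theorem} applies directly to $\mathcal{A}(\Sigma)$ of dimension $d$. It gives $CH_i(\mathcal{A}(\Sigma)) = 0$ for $i<d$ and $CH_d = \mathbb{Z}$, integrally. Since Chow groups are invariants of the isomorphism class of the variety, the same vanishing holds for $X(\beta)$ and $R_{u,w}^\circ$. Translated into cohomological indexing, this says $CH^i = 0$ for $i>0$.

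The one point needing care, and the likely obstacle, concerns what "cluster variety" means here. It is important that the isomorphism $X(\beta)\cong \mathcal{A}(\Sigma)$ is an honest isomorphism of schemes, and not merely an identification of the coordinate ring with the upper cluster algebra. For RFR sink-recurrent seeds this distinction is resolved by the equality $A = U$ of \cite{MulAU}, together with the fact that the cited references identify the coordinate ring of $X(\beta)$ with the cluster algebra. Since $X(\beta)$ is affine, that ring isomorphism yields the scheme isomorphism, and I would rely on it.

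A second point is the base field. Theorem \ref{chow/cohom theorem} is stated over $\mathbb{C}$ in this section, but its proof uses only the excision stratification of Proposition \ref{vanishing locus is linear} and \cite{cao2024valuation}. The conclusion therefore holds over whatever field the braid variety is considered.
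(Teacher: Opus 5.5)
Your proposal is correct and follows the paper, which obtains this corollary directly from Theorem \ref{chow/cohom theorem}. The hypotheses come from the RFR sink-recurrent cluster structures on double braid varieties given by Theorem 4.10 and Corollary 6.7 of \cite{galashin2026braid}, with open Richardson varieties in any Lie type treated as a special case. Your closing remark about arbitrary base fields goes beyond the paper, since Section \ref{section 5} works over $\mathbb{C}$. That extension would also require the $CH_{d-1}$ input from \cite{cao2024valuation} to hold over the chosen field, though it is not needed for the statement.
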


\begin{figure}
    \centering \includegraphics[width=\linewidth]{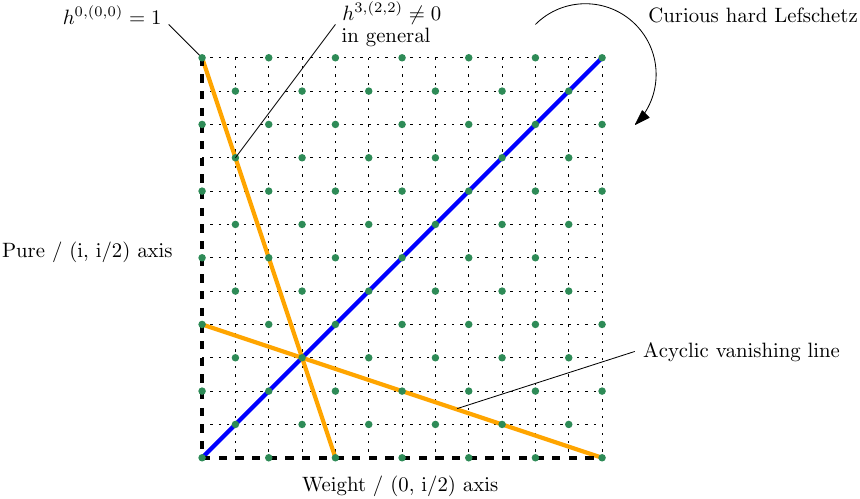}
    \caption{The green dots represent potentially non-zero mixed Hodge cohomology groups of an RFR sink-recurrent cluster variety $\mathcal{A}(\Sigma)$. In general the lattice points represent putative such cohomology groups, where the group represented by the dot $i$ steps down and $j$ steps to the right of the top-left point is $H^{i, (\frac{i + j}{2}, \frac{i + j}{2})}(\mathcal{A}(\Sigma)).$
    Artin vanishing shows that there are no non-zero such groups below the ``Weight" axis (so named because moving left and right in this diagram only changes weights), while weight theory shows there are none to the left of the ``Pure" axis (so named because the groups along the vertical dotted line are the pure cohomology groups).}\label{cohomology chart}
\end{figure}

Figure \ref{cohomology chart} contains a diagram illustrating the homological information deduced over the last few years about general RFR sink-recurrent cluster varieties. There are no cohomologies ``off" of this diagram because any RFR sink-recurrent $\mathcal{A}(\Sigma)$ has mixed Tate cohomology, and the groups are symmetric about the blue line by curious Poincar\'e symmetry, both by Proposition \ref{sink-recurrent CHL}. If $Q$ is acyclic then in fact all groups below either of the solid orange lines vanish, by the main theorems of \cite{LSII}. The orange lines are solid, however, because in general there are non-zero groups along them: for example, Proposition 9.4 of \cite{LSII} shows that $H^{3, (2, 2)}(\mathcal{A}(Q)) = H^1(\underline{Q}^{mut}; \mathbb{Q})$, where $\underline{Q}^{mut}$ denotes the underlying topological space of the \textit{undirected} induced subgraph on the mutable portion of $Q$.\\

Previous methods do not suffice to show whether the axes (the thicker black dashed lines) are ``solid" or not, i.e. whether one can have non-zero pure cohomology groups for RFR sink-recurrent cluster varieties. Theorem \ref{chow/cohom theorem} shows that all such groups are indeed trivial (hence the \textit{dotting} of the axes) save the extremal ones, which are $1$-dimensional.

\section{Consequences of Vanishing and Splitness}\label{section 6}

\subsection{...in Knot Homology}\label{subsection 6.1}

The tools developed in the preceding sections allow us to deduce the vanishing of a suite of knot homology groups of positive braid closures.\\

Let us establish some notation. From here out we write $d(\beta) = \ell(\beta) - \ell(\Delta)$, or even just $d$ when the input braid is clear, to denote the dimension of the braid variety associated to a positive braid $\beta$.  Given $\beta \in Br_n^+$ a positive braid on $n$ strands, we write $HHH(\beta)$ to denote the \textit{(reduced) Khovanov-Rozansky homology} of $\beta$. Our conventions for its definition are the same as in \cite{li2025last}, which are somewhat more common than those in \cite{MR4809331}. In particular, Li uses the convention that the Rouquier complex of a single letter $\sigma_s$ is $F_{\sigma_s}^\bullet := (B_s \rightarrow R(1))$ (where $R$ is a polynomial ring over $\mathbb{Q}$) while Galashin and Lam set $F_{\sigma_s}^{\bullet, GL} := (B_s \rightarrow R)$.\\

In any event, $HHH(\beta)$ arises as the cohomology of a certain Hochschild complex associated to $\beta$ and is triply graded, by Hochschild degree $a$, cohomological degree $t$, and quantum degree $q$, and we write $HHH^{a, t, q}(\beta)$ for the group corresponding to a given grade.\\

We recall that the equivariant (compactly supported or Borel-Moore) (co)homology of a variety $X$ equipped with the action of an algebraic torus admits a mixed Hodge structure, and that Section 2.2 of \cite{MR5110495} shows that any braid variety of the form $X(\beta), \beta \in Br_n^+$ admits an action of $T := (\mathbb{C}^*)^{n-1}$. Our work so far and knot homology groups are then connected by the following

\begin{thm}[Theorem 4.22 of \cite{MR5110495}]\label{HHH equivariant cohom}
Given a positive braid $\gamma$ with $\delta(\gamma) = w_0$, one has
    \begin{equation}
        gr^WH_{c, T}^*(X(\gamma)) \cong HHH^{a = 0}(\gamma \Delta^{-1}).
    \end{equation}
\end{thm}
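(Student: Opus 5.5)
Since this is Theorem 4.22 of \cite{MR5110495}, the plan is to reconstruct its proof along geometric Soergel theory lines: realize both sides as the same $\Hom$ computation in a mixed category of sheaves on $B\backslash G/B$. The starting point is the Deodhar-type description of $X(\gamma)$. For $\gamma=\sigma_{i_1}\cdots\sigma_{i_r}$, the variety $X(\gamma)$ is, up to the $T$-action, the space of chains of flags
\[
B=F_0\xrightarrow{s_{i_1}}F_1\xrightarrow{s_{i_2}}\cdots\xrightarrow{s_{i_r}}F_r
\]
in relative position $s_{i_j}$ at each step, with $F_r$ in position $w_0$ relative to the standard opposite flag. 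Equivalently, $X(\gamma)$ is a fiber of the convolution map
\[
\mathcal{O}_{s_{i_1}}\times^B\cdots\times^B\mathcal{O}_{s_{i_r}}\to G/B
\]
over the point $w_0B$. Consequently $H_{c,T}^*(X(\gamma))$ is the $T$-equivariant costalk (dually, the stalk) at $w_0$ of the $!$-convolution
\[
j_{s_{i_1}!}\star\cdots\star j_{s_{i_r}!}
\]
of standard objects in the $B$-equivariant derived category.

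The second step transports this to Soergel bimodules. I would work in the mixed, or weight, version of $D_B(G/B)$, via Achar–Riche or the Bezrukavnikov–Yun mixed category in which Frobenius or Hodge weights become an extra grading. There the standard object $j_{s!}$ corresponds to the Rouquier complex $F_{\sigma_s}$, which is the cone of $B_s\to R$ up to twist, and convolution corresponds to tensor product. Taking the costalk at $w_0$ becomes $\Hom$ from the standard, or costandard, object at $w_0$. Since the Rouquier complex of $\Delta$ is that standard object, and Rouquier complexes are invertible, we get
\[
\Hom(F_\Delta,F_\gamma)\cong\Hom(R,F_{\gamma\Delta^{-1}}),
\]
which is exactly the Hochschild degree $a=0$ part $HH^0$ of the Rouquier complex, i.e. $HHH^{a=0}(\gamma\Delta^{-1})$.

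The main obstacle is the passage from $H_{c,T}^*$ to its weight-graded $gr^W$. One has to show that the weight spectral sequence relating the mixed Hodge structure on $H_{c,T}^*(X(\gamma))$ to the homotopy-category $\Hom$ degenerates. Concretely, the $\Hom$ spaces between Soergel bimodules must be pure of the expected weight, which is equivariant formality and purity of Bott–Samelson costalks. I would establish this using the parity-sheaf and pointwise-purity results for Bott–Samelson resolutions: each Soergel bimodule $B_s$ is the pushforward of a constant sheaf under a smooth proper map, so the $\Hom$ spaces in the homotopy category are computed by a complex whose terms are pure. The homological degree $t$ is then read off as the weight minus the cohomological degree. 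Only $gr^W$, not the filtered object, is identified because the differentials of the Rouquier complex can mix weights. The remaining work is bookkeeping: matching the $(t,q)$ gradings and the $R(1)$ versus $R$ twist convention with the mixed Hodge bigrading, a routine but error-prone normalization.
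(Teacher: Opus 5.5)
This is correct and is essentially the argument behind the citation. The paper gives no proof of its own but imports the result from \cite{MR5110495}, which extends the Galashin--Lam treatment of open Richardson varieties in \cite{MR4809331} along the lines you sketch. Namely, $X(\gamma)$ is the fiber over $w_0B$ of the convolution of the $s_{i_j}$-cells, so $H^*_{c,T}(X(\gamma))\cong\Hom^\bullet(\Delta_{w_0},\Delta_{s_{i_1}}\star\cdots\star\Delta_{s_{i_r}})$; stalk and costalk agree up to shift and twist because $Bw_0B/B$ is open. Purity of Bott--Samelson $\Hom$ spaces then makes the weight spectral sequence degenerate onto $\Hom_{K^b(\mathrm{SBim})}(F_\Delta,F_\gamma)\cong\Hom(R,F_{\gamma\Delta^{-1}})$.

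There are two harmless slips. First, your initial description of $X(\gamma)$ should impose $F_r=w_0B$; a relative-position condition against the opposite flag is open and cuts out a different variety, though your fiber-over-$w_0B$ description is the right one and is what the argument uses. Second, only $\mathrm{gr}^W$ is recovered because genuine extensions between different weights are invisible in the homotopy category of pure objects, not because Rouquier differentials mix weights (they are weight-homogeneous).
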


Note that our notation differs slightly from that of \cite{MR5110495} in that we do not use the reversal of $\beta$ on the left hand side. The two varieties are $T$-equivariantly isomorphic up to a twist, however, so nothing is lost in this replacement. \footnote{This was clarified to us by Eugene Gorsky and Soyeon Kim.}\\

The above isomorphism is, indeed, one of (bi)graded abelian groups, with respect to gradings which we now spend some time discussing. We recall that with Galashin and Lam's conventions for Rouquier complexes, given a type $A$ open Richardson variety $R_{v, w}^\circ$ thought of as the braid variety of some positive braid $\gamma$, one receives an isomorphism \begin{equation}
    H_{c, T}^{d + 2p + k, (p, p)}(R_{v, w}^\circ) \cong H^{k, (p)}(HH^0(F_{\gamma \Delta^{-1}}^{\bullet, GL})).
\end{equation}
In particular this means that $H^{d, (p, p)}_{c, T}(R_{v,w}^\circ) \cong H^{-2p, (p)}(HH^0(F_{\gamma \Delta^{-1}}^{\bullet, GL}))$. Since Li gives the generators of $R$ degree $2$ and twists $F_{\sigma_s}^\bullet$ by an extra degree (both cohomological and quantum), one sees that \begin{equation}
    H^{k,(p)}(HH^0(F_{\gamma \Delta^{-1}}^{\bullet, GL})) \cong H^{k + d, (2p - d)}(HH^0(F_{\gamma \Delta^{-1}}^\bullet)),
\end{equation}
\noindent and so $H^{d + 2p +k, (p, p)}_{c, T}(R_{v, w}^\circ) \cong H^{k + d, (2p - d)}(HH^0(F_{\gamma \Delta^{-1}}^\bullet))$. Relabeling again we recover the appropriate graded version of Theorem \ref{HHH equivariant cohom}, and indeed this is the correct grading in general: \begin{equation}
   H^{t + q + d, (\frac{q + d}{2}, \frac{q + d}{2})}_{c, T}(X(\gamma)) \cong HHH^{0, t, q}(\gamma \Delta^{-1}).
\end{equation}

We will usually apply the previous equation in the case that $\gamma = \beta \Delta$, in which case $d(\beta \Delta) = \ell(\beta)$. Let us also note suggestively that sans the subscript ``$T$" the $q = -t$ graded groups on the left hand side of the above are (isomorphic to) the groups which we computed in Theorem \ref{chow/cohom theorem} -- i.e., under Poincar\'e duality they identify with the groups along the dotted Weight axis of Figure \ref{cohomology chart}.\\

We give an example to illustrate this.

\begin{exmp}\label{ex:trefoil}
Let $\beta = \sigma_1^4 \in \mathrm{Br}^+_2$. In this case the $T$ action on $X(\beta)$ is free and the
quotient is a two-dimensional variety $Y$, so that $H^{k, (p, q)}_c(Y) \cong H^{k + 1, (p, q)}_{c, T}(X)$ (as $dim(T) = n - 1 = 1$). By Example 4.20 of \cite{MR4809331}, the only nonzero mixed Hodge groups of Y are
\begin{equation}
    H_c^{4,(2,2)}(Y) = \mathbb{Q}\;\;\text{and}\;\; H_c^{2,(0,0)}(Y) = \mathbb{Q},
\end{equation} and so $H^{5, (2, 2)}_{c, T}(X) \cong H^{3, (0, 0)}_{c, T}(X) \cong \mathbb{Q}$. On the other hand, Section 2.5 of \cite{li2025last} unwinds to tell us that $HHH^{a = 0}(\sigma_1^3)$ is supported in degrees $(t,q) = (1, 1)$ and $(3,-3)$, which agrees with the above.
\end{exmp}

Our main use for this is the numerical consequence that the line $t + q = 0$ in \cite{li2025last}'s notation is the same as the line $k + 2p = 0$ in \cite{MR4809331}'s, and in particular that Galashin-Lam's $H^{d, (p,p)}_{c, T}(X(\beta)) \cong HHH^{0, d(\beta) - 2p, -d + 2p}(\beta \Delta^{-1})$.\\

With the relevant grading comparisons now established we may prove the following

\begin{thm}\label{HHH vanishing}
Let $\beta \in Br_n^+$ be a positive braid on $n$ strands. Then $HHH^{0, t, -t}(\beta)$ vanishes for all $t < \ell(\beta).$ Further, $HHH^{0, \ell(\beta), -\ell(\beta)}(\beta) = \mathbb{Q}$.
\end{thm}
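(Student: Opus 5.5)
Apply the graded form of Theorem \ref{HHH equivariant cohom} with $\gamma = \beta\Delta$, so that $X := X(\beta\Delta)$ has dimension $d = d(\beta\Delta) = \ell(\beta)$. The identification
\begin{equation*}
H^{t+q+d,(\frac{q+d}{2},\frac{q+d}{2})}_{c,T}(X) \cong HHH^{0,t,q}(\beta),
\end{equation*}
specialised to $q=-t$, gives
\begin{equation*}
HHH^{0,t,-t}(\beta)\cong H^{d,(p,p)}_{c,T}(X), \qquad p = \tfrac{d-t}{2}.
\end{equation*}
So $t<\ell(\beta)$ corresponds to $p>0$, and $t=\ell(\beta)$ corresponds to $p=0$. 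The task is therefore to show that $H^{d,(p,p)}_{c,T}(X)$ vanishes for $p>0$ and is $\mathbb{Q}$ for $p=0$.

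\textbf{Step 1: the non-equivariant groups.} Since $X$ is smooth of dimension $d$, Poincar\'e duality gives
\begin{equation*}
H^{d,(p,p)}_c(X)\cong \big(H^{d,(d-p,d-p)}(X)\big)^\vee.
\end{equation*}
Theorem \ref{chow/cohom theorem} says that $H^d(X)=H^{d,(d,d)}(X)\cong\mathbb{Q}$. Hence $H^{d,(p,p)}_c(X)$ vanishes for $p>0$ and equals $\mathbb{Q}$ for $p=0$. This uses that $X(\beta\Delta)$ is an RFR sink-recurrent cluster variety, which holds by the results cited in Subsection \ref{subsection 4.1}.

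\textbf{Step 2: passing to the $T$-equivariant groups.} I would write $T=(\mathbb{C}^*)^{n-1}$ and use the Leray spectral sequence for $X\times_T ET\to BT$. The expected shape is $E_2 = H^*(BT)\otimes H^*_c(X)$ after dualizing, with $H^*(BT)$ pure of type $(j,j)$ in degree $2j$. One then tracks which terms can contribute to $H^{d,(p,p)}_{c,T}$.

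A cleaner option is available when $T$ acts freely. If the action is free (or at least has finite stabilizers, by \cite{MR5110495}), then
\begin{equation*}
H_{c,T}(X)\cong H_c(X/T)
\end{equation*}
up to a shift by $\dim T$ and a corresponding Tate twist. I would then instead prove the vanishing for $X/T$ directly.

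In that case the cleanest route is to show that $X/T$ is itself an RFR sink-recurrent cluster variety, or a product of one with a torus. The idea is that the $T$-action rescales frozen variables, and quotienting by it is a quotient of the frozen lattice, in the spirit of Construction and Proposition \ref{still rfr}. Then Theorem \ref{chow/cohom theorem} applies to $X/T$, which has dimension $d-(n-1)$. After matching indices, its top compactly supported group is $\mathbb{Q}$ in weight $(0,0)$ and zero in the other weights $(p,p)$. Example \ref{ex:trefoil} is a consistency check: there $H^{3,(0,0)}_{c,T}\cong\mathbb{Q}$ sits on the $t=-q$ line.

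\textbf{Main obstacle.} I expect the hard part to be the equivariant-versus-ordinary comparison, and specifically verifying that $X(\beta\Delta)/T$ (or the $T$-equivariant cohomology) inherits the vanishing. The gradings also need care, since the shift by $\dim T$ changes both $d$ and the cohomological degree in tandem.

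If the cluster quotient approach fails, I would fall back on the Leray spectral sequence. Artin vanishing and weight considerations confine $H^*_c(X)$ to degrees $\ge d$, with weights at most the degree. The classes from $H^*(BT)$ raise degree by $2j$ and raise the weight index $p$ by $j$. So any contribution to $H^{d,(p,p)}_{c,T}$ with $p>0$ would have to come from a group $H^{d',(p',p')}_c(X)$ with $d'-2p'$ smaller than allowed. Such groups are killed by Step 1 together with curious Poincar\'e symmetry (Proposition \ref{sink-recurrent CHL}).
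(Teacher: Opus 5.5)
\textbf{Verdict.} Your Step 1 is exactly the paper's final step, and your fallback in Step 2 is the paper's actual argument. However, the route you single out as the ``cleanest'' one has a genuine gap and does not work as stated.

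\textbf{The quotient route.} The full torus $T=(\mathbb{C}^*)^{n-1}$ need not act freely, or even with finite stabilizers, on $X(\beta\Delta)$. Only the subtorus $T_{c(\beta)}$ of dimension $n-c(\beta)$ is known to act freely. For example, take $\beta=1\in Br_n^+$ with $n\geq 2$. Then $X(\Delta)$ is a point with trivial $T$-action, so there is no quotient of dimension $d-(n-1)<0$. Moreover $H^*_{c,T}(X(\Delta))=H^*(BT)$ is infinite-dimensional, so it cannot be the compactly supported cohomology of any quotient variety. Separately, the claim that $X/T$ is (a torus times) an RFR sink-recurrent cluster variety is unsupported. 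The Construction and Proposition \ref{still rfr} concern freezing and deleting a mutable vertex, not quotienting by a torus action. So Theorem \ref{chow/cohom theorem} cannot be applied to $X/T$ on that basis.

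\textbf{The equivariant comparison.} The step you flag as the main obstacle is in fact immediate, and it is what the paper does. Use the normalization $H^*_{c,T}(X)\cong H^*_c(X\times_T(\mathbb{C}^N\setminus 0)^{n-1})$ for $N\gg 0$. The compactly supported Leray spectral sequence over the simply connected compact base $(\mathbb{P}^{N-1})^{n-1}$ then has $E_2^{a,b}=H^a((\mathbb{P}^{N-1})^{n-1})\otimes H^b_c(X)$. It is a spectral sequence of mixed Hodge structures by Arapura's theorem \cite{MR2178703}, which you need for your weight bookkeeping to mean anything.

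Since $H^b_c(X)=0$ for $b<d$ (Artin vanishing plus Poincar\'e duality), $E_2^{0,d}$ is the only nonzero term of total degree $d$. No differential enters it, because the sources lie in negative columns. No differential leaves it, because the targets lie in rows $b<d$. Hence $H^d_{c,T}(X)\cong H^d_c(X)$ as mixed Hodge structures, and your Step 1 then gives both the vanishing for $p>0$ and the copy of $\mathbb{Q}$ for $p=0$.

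\textbf{Corrections to your fallback.} The terms $H^{2j}(BT)\otimes H^{d-2j}_c(X)$ with $j\geq 1$ die simply because $d-2j<d$. They do not die because of a bound on $d'-2p'$, which equals $d-2p$ for every one of them and so distinguishes nothing. Curious Poincar\'e symmetry plays no role at this stage; it enters only inside Theorem \ref{chow/cohom theorem}. Finally, the assertion $HHH^{0,\ell(\beta),-\ell(\beta)}(\beta)=\mathbb{Q}$ requires $E_2^{0,d}$ to survive to $E_\infty$. Your sketch addresses only the vanishing for $p>0$, but the degeneration argument above supplies this.
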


\begin{proof}
    The preceding discussion tells us that it suffices to show instead that $H_{c, T}^d(X(\beta \Delta)) \cong H_{c, T}^{d, (0, 0)}(X(\beta \Delta))$ and that this group is one-dimensional. To do this we consider the structure morphism $(\mathbb{C}^N \setminus \{0\})^{n - 1} \rightarrow (\mathbb{P}^{N-1})^{n - 1}$ induced by the total space of the tautological bundle for large $N$, which approximates the universal bundle $ET \rightarrow BT$ in the sense that for any CW complex $X$ we have  \begin{equation}
        H^*_{c, T}(X) \cong H^*_c(X \times _T (\mathbb{C}^N \setminus \{0\})^{n - 1})
    \end{equation} for $N \gg *$. Further, $(\mathbb{C}^N \setminus \{0\})^{n - 1} \rightarrow (\mathbb{P}^{N-1})^{n - 1}$ is a Zariski locally trivial fibration, and so any time we have an affine variety $X$ with a $T$ action, $X \times_T (\mathbb{C}^N \setminus \{0\})^{n - 1}$ also exists as a quasi-projective variety by Proposition 23 of \cite{MR1614555}. In particular this implies that the Leray spectral sequence of the fibration from $X(\beta \Delta) \times_T (\mathbb{C}^N \setminus \{0\})^{n - 1}$ to $(\mathbb{P}^{N-1})^{n-1}$ with fiber $X(\beta \Delta)$ is in fact a spectral sequence of mixed Hodge structures by the main theorem of \cite{MR2178703}.\\

   The compactly supported Leray spectral sequence of this fibration has $E_2$ page \begin{equation}
        E_2^{p, q} = H^p_c((\mathbb{P}^{N-1})^{n-1}, \underline{H^q_c(X(\beta \Delta))})
    \end{equation} and converges to $H^{p + q}_c(X(\beta \Delta) \times_T (\mathbb{C}^N \setminus \{0\})^{n - 1})$. Since $(\mathbb{P}^{N-1})^{n-1}$ is simply connected, each of the local systems $\underline{H^q_c(X(\beta \Delta))}$ is trivial. Therefore, since our computations are with field coefficients, we see that $E_2^{p, q}$ may in fact be identified with $H^p((\mathbb{P}^{N-1})^{n-1}) \otimes H_c^q(X(\beta \Delta))$ (as a mixed Hodge structure, and using that $(\mathbb{P}^{N-1})^{n-1}$ is compact).\\

    From Artin vanishing we therefore learn that $E_2^{p,q} = 0$ for all $q < d$ and that $E_2^{0, d} \cong H^{d}_c(X(\beta \Delta))$, where the latter is an isomorphism of mixed Hodge structures. This means that no differentials enter or leave $E_2^{0, d} \cong H^{d}_c(X(\beta \Delta))$. This being the only group on the $d$-th diagonal of the $E_2$ page, we immediately learn that $H_c^d(X(\beta \Delta) \times_T (\mathbb
    C^N \setminus \{0\})^{n-1}) \cong H^{d}_c(X(\beta \Delta))$ as mixed Hodge structures. Now the claim follows from Theorem \ref{chow/cohom theorem} upon taking sufficiently large $N$ and using Poincar\'e duality.
    \end{proof}

This computation can be extended to an extra degree if $X(\beta \Delta)$ admits a cluster structure whose corresponding quiver is a forest, in the sense of \cite{schwartz2026homfly}. To see this, note that if $Q$ is RFR acyclic and $\underline{Q}^{mut}$ has no cycles then \begin{equation}
    H^{d-1}(\mathcal{A}(Q)) = H^{d-1, (d-1, d-1)}(\mathcal{A}(Q)).
\end{equation}

The vanishing of $H^{d-1, (d - 1 - i, d - 1 - i)}$ for $i > 1$ follows from the acyclic vanishing bound in \cite{LSII}, and the vanishing of $H^{d-1, (d-2, d-2)}$ follows in this case  because curious Poincar\'e symmetry implies that $H^{d - 1, (d - 2, d - 2)}(\mathcal{A}(Q)) \cong H^{3, (2, 2)}(\mathcal{A}(Q))$, the latter of which vanishes if $\underline{Q}^{mut}$ has no cycles by Proposition 9.4 of \cite{LSII}.\\

With this information in tow, essentially the same argument as the one in Theorem \ref{HHH vanishing} shows the following corollary. The class of cluster varieties it applies to was considered previously in \cite{schwartz2026homfly}.

\begin{cor}\label{forest vanishing}
    Let $\beta \in Br_n^+$ be a positive braid on $n$ strands such that $X(\beta \Delta) \cong \mathcal{A}(Q)$ for an RFR forest quiver $Q$. Then $HHH^{0, t, 1 - t}(\beta)$ vanishes for all $t < \ell(\beta) - 1$.
\end{cor}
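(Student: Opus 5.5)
Following the proof of Theorem \ref{HHH vanishing}, I will use the graded form of Theorem \ref{HHH equivariant cohom} with $\gamma = \beta\Delta$ and $d = d(\beta\Delta) = \ell(\beta)$. The line $q = 1 - t$ corresponds to total degree $t + q + d = d+1$ with weight $(\frac{1-t+d}{2}, \frac{1-t+d}{2})$. So the claim becomes: $H^{d+1}_{c,T}(X(\beta\Delta))$ is concentrated in the single Hodge type $(p,p)$ with $1 - t + d = 2p$ at $t = \ell(\beta) - 1$, namely $p = 1$, i.e. type $(1,1)$. Equivalently, $H^{d+1,(p,p)}_{c,T}$ vanishes for $p \ge 2$, since $t < \ell(\beta)-1$ means $p > 1$. (I will double-check the indexing against Example \ref{ex:trefoil}.)

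\textbf{Non-equivariant input.} The cohomological input is the displayed identity $H^{d-1}(\mathcal{A}(Q)) = H^{d-1,(d-1,d-1)}(\mathcal{A}(Q))$ for RFR forest $Q$. By Poincar\'e duality on the smooth $d$-dimensional $X = X(\beta\Delta)$, $H^{d+1}_c(X) \cong H^{d-1}(X)^\vee \otimes \mathbb{Q}(-d)$, which is pure of type $(1,1)$. Theorem \ref{chow/cohom theorem} gives likewise that $H^d_c(X)$ is one-dimensional of type $(0,0)$.

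\textbf{Equivariant step.} As before, I approximate $ET \to BT$ by $(\mathbb{C}^N\setminus\{0\})^{n-1} \to (\mathbb{P}^{N-1})^{n-1}$ and use the compactly supported Leray spectral sequence of mixed Hodge structures, with $E_2^{p,q} = H^p((\mathbb{P}^{N-1})^{n-1}) \otimes H^q_c(X)$. By Artin vanishing, only two terms contribute to total degree $d+1$:
\begin{itemize}
\item $E_2^{0,d+1} = H^{d+1}_c(X)$, of type $(1,1)$;
\item $E_2^{1,d} = H^1((\mathbb{P}^{N-1})^{n-1}) \otimes H^d_c(X)$, which is zero because projective spaces have no odd cohomology.
\end{itemize}
A subquotient of a type-$(1,1)$ structure is again of type $(1,1)$, so whatever differentials occur, $H^{d+1}_{c,T}(X)$ is a subquotient of a pure $(1,1)$ structure.

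\textbf{Main obstacle.} I expect the hardest part to be the grading bookkeeping, not the geometry. I need to confirm that type $(1,1)$ in degree $d+1$ corresponds exactly to $t = \ell(\beta)-1$, and that the remaining $t$ are precisely the vanishing range. I also need the direction of the inequality. Weight considerations already kill $p \le 0$ in degree $d+1$ when $H^{d+1}_c$ has type $(1,1)$. So the surviving statement is vanishing for $t \ne \ell(\beta)-1$ on the $t<\ell(\beta)-1$ side, with the other side handled automatically or by Artin vanishing. If the indexing is off by the $T$-twist, I would recalibrate using Example \ref{ex:trefoil}: there $d+1 = 3$ in equivariant degree, with $H^{3,(0,0)}_{c,T}$ nonzero, matching $(t,q) = (3,-3)$.
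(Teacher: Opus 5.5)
Your proposal is correct and follows the paper's own argument. You use the purity $H^{d-1}(\mathcal{A}(Q)) = H^{d-1,(d-1,d-1)}(\mathcal{A}(Q))$ for forest quivers, Poincar\'e duality to make $H^{d+1}_c(X(\beta\Delta))$ pure of type $(1,1)$, and the Leray spectral sequence from Theorem \ref{HHH vanishing}. Your remark that $d_2\colon E_2^{0,d+1}\to E_2^{2,d}$ may be nonzero (it is in Example \ref{ex:trefoil}) but only cuts out a sub-Hodge structure is exactly the change needed to carry that degree-$d$ argument over to degree $d+1$.

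One small slip in your calibration remark: in Example \ref{ex:trefoil} one has $d=3$, so $H^{3,(0,0)}_{c,T}$ sits in degree $d$ on the line $q=-t$, not in degree $d+1$.
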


We now discuss another application of Theorem \ref{chow/cohom theorem}. Let $c(\beta)$ denote the number of connected components of $\Lambda(\beta)$, the $(-1)$-framed closure of $\beta$ into a link. Then, Section 2.2 of \cite{MR5110495} constructs a subtorus $T_{c(\beta)} \subset T$ of dimension $n - c(\beta)$ which acts freely on $X(\beta \Delta)$. The quotient $X(\beta \Delta)/T_{c(\beta)}$ turns out to be an interesting variety in its own right.
\begin{defn}[Theorem 2.39 of \cite{MR4855860}]
    The \textit{augmentation variety} of the $(-1)$-framed closure of a braid $\beta$ is \begin{equation}
        Aug(\beta, \mathfrak{t}_c) := X(\beta \Delta)/T_{c(\beta)}.
    \end{equation}
\end{defn}

Classically, the augmentation variety of a Legendrian knot $\Lambda$ equipped with a collection of marked points $\mathfrak{t}$ is defined to be the space of DGA morphisms from the Chekanov-Eliashberg DGA of $(\Lambda, \mathfrak{t})$ to $\mathbb{C}$. That we can instead define augmentation varieties as above is the content of the cited theorem.\\

In fact the proof of Theorem 2.30(ii) of \cite{MR5110495} implies that the quotient $X(\beta \Delta) \rightarrow Aug(\beta, \mathfrak{t}_c)$ is a trivial torus fibration. Therefore we learn the following

\begin{thm}\label{augmentation vanishing}
    The compactly supported motive of an augmentation variety $Aug(\beta, \mathfrak{t}_c)$ over any number field where the torus fibration $X(\beta \Delta) \rightarrow Aug(\beta, \mathfrak{t}_c)$ trivializes is split. Further, if $e$ is the dimension of $Aug(\beta, \mathfrak{t}_c)$, then \begin{equation}
        H^e(Aug(\beta, \mathfrak{t}_c)) \cong H^{e, (e, e)}(Aug(\beta, \mathfrak{t}_c)),
    \end{equation} and both groups are $1$-dimensional. Moreover, $$CH_i(Aug(\beta, \mathfrak{t}_c)) = 0$$ for all $i < e$ (with the same assumptions).
\end{thm}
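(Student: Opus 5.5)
The plan is to use the trivial torus fibration to write $X(\beta \Delta) \cong Aug(\beta, \mathfrak{t}_c) \times \mathbb{G}_m^{r}$, with $r = n - c(\beta)$, over any number field $k$ where the fibration trivializes. Every statement about $Aug := Aug(\beta, \mathfrak{t}_c)$ will then be extracted from the corresponding statement for the RFR sink-recurrent cluster variety $X(\beta\Delta)$.

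\emph{Motivic part.} By Theorem \ref{RFR LINEAR THM}, Corollary \ref{sink-recurrent MT} and Theorem \ref{sink-recurrent split}, $M^c(X(\beta\Delta))$ is mixed Tate and split. Since $M^c(\mathbb{G}_m) \cong \mathbb{Q}(1)[2] \oplus \mathbb{Q}[1]$, we have
\[
M^c(X(\beta\Delta)) \cong M^c(Aug)\otimes(\mathbb{Q}(1)[2]\oplus\mathbb{Q}[1])^{\otimes r},
\]
and the summand indexed by $\mathbb{Q}[1]^{\otimes r}$ exhibits $M^c(Aug)[r]$ as a direct summand of $M^c(X(\beta\Delta))$. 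Thick subcategories are closed under summands, so $M^c(Aug)\in DMT(k)$. The Levine cohomology objects of $M^c(Aug)$ are then summands of split objects of $MT(k)$. By the weight filtration, a summand of a direct sum of Tate twists has a split associated graded and zero extension class; concretely, its Hom groups and $\mathrm{Ext}^1$ are computed componentwise using Equation \ref{MT}. Hence the summand is split. This is the step needing care, though I expect it to be routine.

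\emph{Cohomology.} By Künneth, $H^*(X(\beta\Delta)) \cong H^*(Aug)\otimes H^*(\mathbb{G}_m)^{\otimes r}$, compatibly with mixed Hodge structures, where $H^1(\mathbb{G}_m)=\mathbb{Q}(-1)$ has type $(1,1)$. The variety $Aug$ is smooth and affine, being a free quotient of a smooth affine variety by a torus (or a factor of a smooth variety), of dimension $e = d - r$. Artin vanishing therefore kills $H^j(Aug)$ for $j > e$. The degree $d = e + r$ part of the Künneth decomposition is then exactly $H^e(Aug)\otimes H^r(\mathbb{G}_m^r)$, so
\[
H^d(X(\beta\Delta)) \cong H^e(Aug)(-r).
\]
Theorem \ref{chow/cohom theorem} gives that this group is $\mathbb{Q}$ of type $(d,d)$. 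Untwisting, $H^e(Aug)$ is one-dimensional of type $(e,e)$.

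\emph{Chow groups.} The pullback $CH_i(Aug) \to CH_{i+r}(Aug\times\mathbb{G}_m^r)$ is injective. To see this, write $\mathbb{G}_m^r \subset \mathbb{A}^r$ and compose with the homotopy-invariance isomorphism $CH_i(Aug)\cong CH_{i+r}(Aug\times\mathbb{A}^r)$. Then use the splitting given by restriction to a rational point: for $\mathbb{G}_m$, pull back along $Aug \times\{1\}\hookrightarrow Aug\times\mathbb{G}_m$, the Gysin map for a regular embedding whose composite with flat pullback is the identity. Iterating over the $r$ factors embeds $CH_i(Aug)$ into $CH_{i+r}(X(\beta\Delta))$. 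The latter vanishes for $i + r < d$, i.e. for $i<e$, by Theorem \ref{chow/cohom theorem}.

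The main obstacle is only bookkeeping: keeping the Tate twists and the Künneth degrees aligned in the cohomology step, and applying Gysin pullback for the regular embedding $Aug\times\{1\}$ integrally even though $Aug$ is not assumed anything beyond smoothness (which it has).
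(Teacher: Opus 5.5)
Your proposal is correct and follows the paper's proof: you split off the torus factor, use the $\mathbb{Q}[r]$ summand of $M^c(\mathbb{G}_m^r)$ together with closure of $DMT(k)$ and of split objects under direct summands, read off $H^e$ from the K\"unneth formula, and exhibit $CH_*(Aug)$ as a retract of $CH_*(X(\beta\Delta))$. You also supply details the paper leaves implicit, namely Artin vanishing to isolate the degree-$d$ K\"unneth term and the explicit Gysin section at $1 \in \mathbb{G}_m$; the detour through $\mathbb{A}^r$ is not needed for injectivity.
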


\begin{proof}
    Some such number field exists because $T_{c(\beta)}$ is the kernel of characters on $T$. Extending to this base and writing \begin{equation}\label{augmentation splitting}
        X(\beta \Delta) = Aug(\beta, \mathfrak{t}_c) \times T_{c(\beta)}
    \end{equation} \noindent as described above, the symmetric monoidality of the functor sending a variety to its compactly supported motive then shows that \begin{equation}
        M^c(X(\beta \Delta)) \cong M^c(Aug(\beta, \mathfrak{t}_c)) \otimes M^c(T_{c(\beta)}).
    \end{equation} But the former variety is split by Theorem \ref{sink-recurrent split}, and $M^c(T_{c(\beta)})$ admits a shift of the monoidal unit as a summand. By the idempotent completeness of $MT(k)$, this means that $M^c(Aug(\beta, \mathfrak{t}_c))$ must also be split.\\

    The homological consequences follow from the K\"unneth formula applied to the $\mathbb{C}$-points of Equation \ref{augmentation splitting}. In particular, since both $X(\beta \Delta)$ and $T_{c(\beta)}$ have $1$-dimensional highest cohomology supported only in top weight the same must be true of $Aug(\beta, \mathfrak{t}_c)$. The Chow case follows from noting that since the fibration $X(\beta \Delta) \rightarrow Aug(\beta, \mathfrak{t}_c)$ is split, $CH_*(Aug(\beta, \mathfrak{t}_c))$ is a retract of $CH_*(X(\beta \Delta))$. 
\end{proof}

\subsection{...for the Cohomology of Compactifications}\label{subsection 6.2}

Because of the topological complexity of Richardson cells and positroid cells, it is not \textit{a priori} clear that their compactifications have Chow rings or pure cohomologies generated by the boundary strata. Due to our results, this nevertheless turns out to be the case.

\begin{prop}
    Let $X$ be a Richardson variety, positroid variety, or brick variety (respectively), and let $\{Z_i\}$ denote the set of boundary divisors of $X$ with respect to its Richardson, positroid, or braid stratification (resp.). Then for every $j < d = dim(X)$, the proper pushforward $$\bigoplus_i CH_j(Z_i) \rightarrow CH_j(X)$$ is surjective, and so the same holds for the lowest weight rational Borel-Moore homology groups of $X$ by Theorem \ref{cycle class iso}.
\end{prop}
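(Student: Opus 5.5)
The plan is to use the localization (excision) sequence for Chow groups together with Theorem \ref{chow/cohom theorem} applied to the open stratum. In each of the three cases $X$ has a dense open stratum $X^\circ$: an open Richardson variety $R_{u,w}^\circ$, an open positroid variety (the isomorphic image of some $R_{u',w'}^\circ$ under $G/B \to G/P$, by \cite{MR3176610}), or a braid variety (by \cite{MR5110495}'s stratification of brick varieties). Its complement is the boundary $\partial X = \bigcup_i Z_i$, and the $Z_i$ are exactly the boundary divisors, i.e. the closures of the codimension-one strata. First I will check, for each stratification, that every stratum of $\partial X$ lies in the closure of some codimension-one stratum. For Richardson varieties this follows from closure relations in Bruhat intervals: every $[u',w'] \subsetneq [u,w]$ lies in some $[u'',w'']$ of codimension one. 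Positroid varieties inherit this from the projection. For brick varieties I will use that boundary strata correspond to deleting letters, so each lies in a single-letter deletion. It follows that $\partial X = \bigcup_i Z_i$ set-theoretically.

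Next I will apply the right exact excision sequence
\begin{equation*}
CH_j(\partial X) \to CH_j(X) \to CH_j(X^\circ) \to 0.
\end{equation*}
The open stratum $X^\circ$ is an RFR sink-recurrent cluster variety of dimension $d$, by the results quoted in Subsection \ref{subsection 4.1}. So Theorem \ref{chow/cohom theorem} gives $CH_j(X^\circ)=0$ for $j<d$, and hence $CH_j(\partial X)\to CH_j(X)$ is surjective for $j<d$. It remains to see that $\bigoplus_i CH_j(Z_i) \to CH_j(\partial X)$ is surjective. This is the standard fact that for a finite union of closed subschemes $\partial X = \bigcup Z_i$, every cycle on $\partial X$ is a sum of cycles supported on the $Z_i$: each subvariety of $\partial X$ is irreducible and hence lies in some $Z_i$. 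It would be enough that the union is set-theoretic, since Chow groups only see the reduced structure. Composing the two pushforwards gives the claimed surjectivity.

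For the Borel--Moore statement I will tensor with $\mathbb{Q}$ and use Theorem \ref{cycle class iso}. The $Z_i$ are themselves (projected) Richardson, positroid, or brick varieties, hence linear by Corollaries \ref{first linear}--\ref{last linear} and therefore mixed Tate, and the same holds for $X$. So $A_j(Z_i)\cong gr_{-2j}H^{BM}_{2j}(Z_i)$ and likewise for $X$. Naturality of the cycle class map under proper pushforward, which is compatible with the Hodge realization, then transfers the surjectivity. To set this up over a number field I will work with the models over $\mathbb{Q}$ and then base change to $\mathbb{C}$.

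I expect the main obstacle to be the combinatorial step that the boundary divisors cover the whole boundary, particularly for brick varieties, where I must confirm that \cite{MR5110495}'s stratification has closure relations making every stratum lie in a codimension-one stratum closure. If that fails, I would fall back on the statement with $\{Z_i\}$ replaced by all boundary strata closures, which the same argument proves directly.
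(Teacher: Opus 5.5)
Your proposal is correct and follows the same route as the paper. Both use the right exact excision sequence for the open stratum $X \setminus \bigcup_i Z_i$, which is a braid variety, so $CH_j$ of it vanishes for $j<d$ by Theorem \ref{chow/cohom theorem}; both then pass to lowest-weight Borel--Moore homology via linearity (hence mixed Tateness) of $X$ and the $Z_i$ together with Theorem \ref{cycle class iso}. The extra checks you add are exactly the points the paper's short proof leaves implicit: that the boundary divisors cover $\partial X$, that $\bigoplus_i CH_j(Z_i)\to CH_j(\partial X)$ is onto, and that the cycle class map is natural under proper pushforward.
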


\begin{proof}
    By Corollaries \ref{first linear} through \ref{last linear} and Theorem \ref{cycle class iso} it suffices to show the Chow case. But this follows from excision, which tells us that for all $j < d$ we have a right exact sequence \begin{equation}
        \bigoplus_i CH_j(Z_i) \rightarrow CH_j(X) \rightarrow CH_j(X \setminus \bigcup_i Z_i) \rightarrow 0,
    \end{equation} and the latter term is $0$ because $X \setminus \cup_i Z_i$ is a braid variety, i.e. an RFR sink-recurrent cluster variety.
\end{proof}

One can iterate this procedure on each of the $Z_i$, and then on each boundary divisor in a $Z_i$, and so on, at each step deducing that everything but the fundamental class of a closed stratum in $CH_*$ is in the image of proper pushforwards from smaller strata. Through this we learn the following

\begin{cor}\label{generation}
    Let $X$ be a Richardson variety, positroid variety, or brick variety (resp.) and let $\{S_i\}$ denote the set of closed strata of its Richardson, positroid, or braid stratification (resp.). Then $CH_*(X)$ is generated by the classes $[S_i]$, and the same is true of the lowest weight rational Borel-Moore homology groups of $X$.
\end{cor}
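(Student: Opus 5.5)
The plan is a Noetherian induction on the closed strata, using the preceding proposition as the inductive step. Index the closed strata $\{S_i\}$ by the poset of the stratification (Bruhat intervals for Richardson varieties, positroid strata, or subwords for brick varieties). The structural fact I need is that each closed stratum $S_i$ is itself a Richardson, positroid, or brick variety (resp.) of the same type, with open part $S_i^\circ$ an open Richardson, open positroid, or braid variety. The same should hold for its boundary divisors, which are closed strata of codimension one in $S_i$. I will show by induction on $\dim S_i$ that $CH_*(S_i)$ is generated by the pushforwards of the classes $[S_j]$ for $S_j \subseteq S_i$. The case $S_i = X$ is then the corollary.

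For the inductive step, fix $S = S_i$ of dimension $e$. In degree $e$, $CH_e(S) = \mathbb{Z}\cdot[S]$ because $S$ is irreducible. For $j < e$, apply the preceding proposition to $S$. The excision sequence
\[
\textstyle\bigoplus_k CH_j(Z_k) \rightarrow CH_j(S) \rightarrow CH_j(S^\circ) \rightarrow 0
\]
has vanishing right-hand term by Theorem \ref{chow/cohom theorem}, since $S^\circ$ is an RFR sink-recurrent cluster variety. So every class in $CH_j(S)$ is pushed forward from some boundary divisor $Z_k$, which is a closed stratum of dimension $e-1$. By the induction hypothesis $CH_*(Z_k)$ is generated by the classes of closed strata contained in $Z_k$. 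Pushforward along the closed immersion $Z_k \hookrightarrow S$ sends $[S_j]$ to $[S_j]$, so these classes generate the image. This completes the induction. The same argument works integrally. The boundary of $S$ is the union of the $Z_k$, and excision only needs a surjection from the disjoint union of the $Z_k$, so lower-codimension strata inside intersections need no separate treatment.

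For the Borel--Moore statement I would tensor with $\mathbb{Q}$ and invoke Theorem \ref{cycle class iso}, which applies because $X$ is linear, hence mixed Tate, by Corollaries \ref{first linear}--\ref{last linear}. Since the realization is functorial in proper pushforward, the cycle class map sends $[S_i]$ to the fundamental class of $S_i$ in $gr_{-2j}H^{BM}_{2j}$. The generation statement therefore transfers to lowest-weight Borel--Moore homology. This requires working over a number field over which everything is defined; the strata are defined over $\mathbb{Q}$, so this is harmless.

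The main obstacle is the structural input that closed strata of $X$ are again varieties of the same kind, with open parts RFR sink-recurrent cluster varieties. For Richardson varieties this is standard, since $R_{u',w'} \subseteq R_{u,w}$. For positroid varieties it follows from Knutson--Lam--Speyer, via the projected Richardson description cited from \cite{MR3176610}. For brick varieties it comes from the stratification by braid varieties in \cite{MR5110495}, where a closed stratum is a brick variety of a subword. I would verify this case by case, citing those sources, rather than reprove it.
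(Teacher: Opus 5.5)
Your proposal is correct and is essentially the paper's argument: iterate the preceding proposition down the stratification, using excision along the boundary divisors and the Chow vanishing of the open Richardson, positroid, or braid stratum, then transfer to lowest-weight Borel--Moore homology via Theorem \ref{cycle class iso}. Your induction on $\dim S_i$ simply makes explicit the paper's ``iterate on each $Z_i$, then on each boundary divisor of a $Z_i$, and so on,'' including the structural fact this requires: each closed stratum is again a variety of the same type, with boundary the union of its codimension-one strata.
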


Of course, there are often far more boundary divisors in such an $X$ than one actually needs to generate a given homology group. For example, if $X$ is set to be the positroid variety $\Pi_{e,w} = Gr(k,n)$ then $H^2(X; \mathbb{Q}) \cong \mathbb{Q}$, but $Gr(k,n)$ has $n$ positroid divisors. This is more subtle than the Schubert case, where the images of sub-Schubert varieties form a basis of $A_*$. This leads us to the following question.

\begin{ques}
    What (combinatorial, geometric, etc.) objects do the Chow or pure cohomology groups of a closed Richardson variety $R_{u,w}$, positroid variety $\Pi_{u,w}$, or brick variety brick($\beta$) parametrize?
\end{ques}

For example, just as the answer to this question for a Schubert variety $X_w$ is ``elements of Bruhat order under $w$", the answer in the positroid case should involve an interesting collection of $k$-Bruhat intervals which are comparable to $[u, w]$. Similarly, \cite{bossinger26braid} supplies a natural \textit{spanning set} for the Picard groups of brick varieties in terms of certain subwords of $\beta$, and we expect there to be combinatorially interesting choices of  spanning or parametrizing sets for all Chow degrees.\\

In the positroid case, one way we see to approach this question is by degenerating $\Pi_{u,w}$ to a semi-toric variety and using a judicious choice of faces in the corresponding polyhedral complex to select a basis worth of smaller positroid strata in $\Pi_{u,w}$, a technique which is made possible through the construction of such semi-toric degenerations in \cite{chang2026fence}.

\bibliographystyle{amsplain}
\bibliography{refs}

@article{MulAU,
  author  = {Muller, Greg},
  title   = {$\mathcal{A}=\mathcal{U}$ for locally acyclic cluster algebras},
  journal = {SIGMA. Symmetry, Integrability and Geometry: Methods and Applications},
  volume  = {10},
  year    = {2014},
  pages   = {094},
  note    = {Paper 094, 8 pages},
  doi     = {10.3842/SIGMA.2014.094},
  eprint  = {1308.1141},
  archivePrefix = {arXiv},
  primaryClass  = {math.RA}
}

@article{lam2022cohomology,
  title={Cohomology of cluster varieties, I: Locally acyclic case},
  author={Lam, Thomas and Speyer, David E},
  journal={Algebra \& Number Theory},
  volume={16},
  number={1},
  pages={179--230},
  year={2022},
  publisher={Mathematical Sciences Publishers}
}

@article{LSII,
  title={Cohomology of cluster varieties II: Acyclic case},
  author={Lam, Thomas and Speyer, David E},
  journal={Journal of the London Mathematical Society},
  volume={108},
  number={6},
  pages={2377--2414},
  year={2023},
  publisher={Wiley Online Library}
}

@article {MR4868947,
    AUTHOR = {Casals, Roger and Gorsky, Eugene and Gorsky, Mikhail and Le,
              Ian and Shen, Linhui and Simental, Jos\'e},
     TITLE = {Cluster structures on braid varieties},
   JOURNAL = {J. Amer. Math. Soc.},
  FJOURNAL = {Journal of the American Mathematical Society},
    VOLUME = {38},
      YEAR = {2025},
    NUMBER = {2},
     PAGES = {369--479},
      ISSN = {0894-0347,1088-6834},
   MRCLASS = {13F60 (14M15 20F36)},
  MRNUMBER = {4868947},
MRREVIEWER = {Ashish\ K.\ Srivastava},
       DOI = {10.1090/jams/1048},
       URL = {https://doi.org/10.1090/jams/1048},
}

@misc{galashin2022braid,
  title={Braid variety cluster structures, I: 3D plabic graphs},
  author={Galashin, Pavel and Lam, Thomas and Sherman-Bennett, Melissa and Speyer, David E},
  eprint = {2210.04778},
  archivePrefix = {arXiv},
  year={2022},
  note = {\textup{arXiv:}\texttt{2210.04778}}
}

@misc{bossinger26braid,
  title={Torus actions on compactified braid varieties and polytopality of subword complexes},
  author={Bossinger, Lara and Gorsky, Mikhail and Simental, Jos{\'e}},
  eprint = {2609.12414},
  archivePrefix = {arXiv},
  year={2026},
  note = {\textup{arXiv:}\texttt{2609.12414}}
}

@article{GL,
  title={Mirror symmetry for truncated cluster varieties},
  author={Gammage, Benjamin and Le, Ian and others},
  journal={SIGMA. Symmetry, Integrability and Geometry: Methods and Applications},
  volume={18},
  pages={055},
  year={2022},
  publisher={SIGMA. Symmetry, Integrability and Geometry: Methods and Applications}
}

@misc{dupont2024introduction,
  title={An introduction to mixed Tate motives},
  author={Dupont, Cl{\'e}ment},
  eprint = {2404.03770},
  archivePrefix = {arXiv},
  year={2024},
  note = {\textup{arXiv:}\texttt{2404.03770}}
}

@inproceedings{deligne2005groupes,
  title={Groupes fondamentaux motiviques de Tate mixte},
  author={Deligne, Pierre and Goncharov, Alexander B},
  booktitle={Annales scientifiques de l’Ecole normale sup{\'e}rieure},
  volume={38},
  number={1},
  pages={1--56},
  year={2005},
  organization={Elsevier}
}

@inproceedings{totaro2014chow,
  title={Chow groups, Chow cohomology, and linear varieties},
  author={Totaro, Burt},
  booktitle={Forum of Mathematics, Sigma},
  volume={2},
  pages={e17},
  year={2014},
  organization={Cambridge University Press}
}

@article{totaro2016motive,
  title={The motive of a classifying space},
  author={Totaro, Burt},
  journal={Geometry \& Topology},
  volume={20},
  number={4},
  pages={2079--2133},
  year={2016},
  publisher={Mathematical Sciences Publishers}
}

@article {MR5083265,
    AUTHOR = {Canning, Samir and Larson, Hannah},
     TITLE = {On the {C}how and cohomology rings of moduli spaces of stable
              curves},
   JOURNAL = {J. Eur. Math. Soc. (JEMS)},
  FJOURNAL = {Journal of the European Mathematical Society (JEMS)},
    VOLUME = {28},
      YEAR = {2026},
    NUMBER = {9},
     PAGES = {3869--3918},
      ISSN = {1435-9855,1435-9863},
   MRCLASS = {14C17 (14C15)},
  MRNUMBER = {5083265},
       DOI = {10.4171/jems/1543},
       URL = {https://doi.org/10.4171/jems/1543},
}

@article{galashin2026braid,
  title={Braid variety cluster structures, II: general type},
  author={Galashin, Pavel and Lam, Thomas and Sherman-Bennett, Melissa},
  journal={Inventiones mathematicae},
  volume={243},
  number={3},
  pages={1079--1127},
  year={2026},
  publisher={Springer}
}

@article{muller2013locally,
  title={Locally acyclic cluster algebras},
  author={Muller, Greg},
  journal={Advances in Mathematics},
  volume={233},
  number={1},
  pages={207--247},
  year={2013},
  publisher={Elsevier}
}

@article {MR5110495,
    AUTHOR = {Casals, Roger and Gorsky, Eugene and Gorsky, Mikhail and
              Simental, Jos\'e},
     TITLE = {Positroid links and braid varieties},
   JOURNAL = {J. Reine Angew. Math.},
  FJOURNAL = {Journal f\"ur die Reine und Angewandte Mathematik. [Crelle's
              Journal]},
    VOLUME = {837},
      YEAR = {2026},
     PAGES = {1--54},
      ISSN = {0075-4102,1435-5345},
   MRCLASS = {14M15 (05 13 20 57K43)},
  MRNUMBER = {5110495},
       DOI = {10.1515/crelle-2026-0007},
       URL = {https://doi.org/10.1515/crelle-2026-0007},
}

@misc{Hlavinka-REU,
  author = {Hlavinka, Josephine},
  title  = {Motivic homotopy theory and cellular schemes},
  year   = {2022},
  note   = {REU paper, University of Chicago Mathematics REU},
  howpublished = {\url{https://math.uchicago.edu/~may/REU2022/REUPapers/Hlavinka.pdf}}
}

@article{cao2024valuation,
  title={The valuation pairing on an upper cluster algebra},
  author={Cao, Peigen and Keller, Bernhard and Qin, Fan},
  journal={J. Reine Angew. Math.},
  fjournal={Journal f{\"u}r die reine und angewandte Mathematik (Crelles Journal)},
  number={806},
  volume={2024},
  year={2024},
  pages={71--114},
  publisher={De Gruyter}
}

@article {MR4809331,
    AUTHOR = {Galashin, Pavel and Lam, Thomas},
     TITLE = {Positroids, knots, and {$q,t$}-{C}atalan numbers},
   JOURNAL = {Duke Math. J.},
  FJOURNAL = {Duke Mathematical Journal},
    VOLUME = {173},
      YEAR = {2024},
    NUMBER = {11},
     PAGES = {2117--2195},
      ISSN = {0012-7094,1547-7398},
   MRCLASS = {14M15 (05A15 05B35 13F60 14F08 57K18)},
  MRNUMBER = {4809331},
MRREVIEWER = {K.\ R.\ Goodearl},
       DOI = {10.1215/00127094-2023-0049},
       URL = {https://doi.org/10.1215/00127094-2023-0049},
}

@misc{li2025last,
  title={The Last Three T-degrees in Triply-Graded Link Homology},
  author={Li, Cailan},
  eprint = {2505.14182},
  archivePrefix = {arXiV},
  year={2025},
  note = {\textup{arXiv:}\texttt{2505.14182}}
}

@misc{chang2026fence,
  title={Fence Complexes and Toric Degenerations of Positroid Varieties},
  author={Chang, Cameron and Enugandla, Pranav and Hlavinka, Josephine},
  eprint = {2606.12815},
  archivePrefix = {arXiv},
  year={2026},
  note = {\textup{arXiv:}\texttt{2606.12815}}
}

@book{levine1998mixed,
  title={Mixed motives},
  author={Levine, Marc},
  number={57},
  year={1998},
  publisher={American Mathematical Soc.}
}

@misc{barkley2026combinatorial,
  title={Combinatorial invariance for the coefficient of $ q $ in Kazhdan-Lusztig polynomials},
  author={Barkley, Grant T and Gaetz, Christian and Lam, Thomas},
  eprint = {2601.07793},
  archivePrefix = {arXiv},
  year={2026},
  note = {\textup{arXiv:}\texttt{2601.07793}}
}

@misc{kelly2026some,
  title={Some explicit counter-examples to Weibel's conjecture},
  author={Kelly, Shane},
  eprint = {2608.16066},
  year={2026},
  note = {\textup{arXiv:}\texttt{2608.16066}}
}

@article {MR4855860,
    AUTHOR = {Casals, Roger and Gorsky, Eugene and Gorsky, Mikhail and
              Simental, Jos\'e},
     TITLE = {Algebraic weaves and braid varieties},
   JOURNAL = {Amer. J. Math.},
  FJOURNAL = {American Journal of Mathematics},
    VOLUME = {146},
      YEAR = {2024},
    NUMBER = {6},
     PAGES = {1469--1576},
      ISSN = {0002-9327,1080-6377},
   MRCLASS = {14R25 (57K10 57K33)},
  MRNUMBER = {4855860},
MRREVIEWER = {Priyadip\ Mondal},
       DOI = {10.1353/ajm.2024.a944357},
       URL = {https://doi.org/10.1353/ajm.2024.a944357},
}

@article {MR3176610,
    AUTHOR = {Knutson, Allen and Lam, Thomas and Speyer, David E},
     TITLE = {Projections of {R}ichardson varieties},
   JOURNAL = {J. Reine Angew. Math.},
  FJOURNAL = {Journal f\"ur die Reine und Angewandte Mathematik. [Crelle's
              Journal]},
    VOLUME = {687},
      YEAR = {2014},
     PAGES = {133--157},
      ISSN = {0075-4102,1435-5345},
   MRCLASS = {05E18 (14L35 14M15)},
  MRNUMBER = {3176610},
MRREVIEWER = {Anthony\ Henderson},
       DOI = {10.1515/crelle-2012-0045},
       URL = {https://doi.org/10.1515/crelle-2012-0045},
}

@article {MR1614555,
    AUTHOR = {Edidin, Dan and Graham, William},
     TITLE = {Equivariant intersection theory},
   JOURNAL = {Invent. Math.},
  FJOURNAL = {Inventiones Mathematicae},
    VOLUME = {131},
      YEAR = {1998},
    NUMBER = {3},
     PAGES = {595--634},
      ISSN = {0020-9910,1432-1297},
   MRCLASS = {14C17 (14F99)},
  MRNUMBER = {1614555},
MRREVIEWER = {Burt\ Totaro},
       DOI = {10.1007/s002220050214},
       URL = {https://doi.org/10.1007/s002220050214},
}

@article {MR2178703,
    AUTHOR = {Arapura, Donu},
     TITLE = {The {L}eray spectral sequence is motivic},
   JOURNAL = {Invent. Math.},
  FJOURNAL = {Inventiones Mathematicae},
    VOLUME = {160},
      YEAR = {2005},
    NUMBER = {3},
     PAGES = {567--589},
      ISSN = {0020-9910,1432-1297},
   MRCLASS = {14F42 (14D07)},
  MRNUMBER = {2178703},
MRREVIEWER = {Byungheup\ Jun},
       DOI = {10.1007/s00222-004-0416-x},
       URL = {https://doi.org/10.1007/s00222-004-0416-x},
}

@article {MR3512647,
    AUTHOR = {Escobar, Laura},
     TITLE = {Brick manifolds and toric varieties of brick polytopes},
   JOURNAL = {Electron. J. Combin.},
  FJOURNAL = {Electronic Journal of Combinatorics},
    VOLUME = {23},
      YEAR = {2016},
    NUMBER = {2},
     PAGES = {Paper 2.25, 18},
      ISSN = {1077-8926},
   MRCLASS = {14M15 (05E99 14M25)},
  MRNUMBER = {3512647},
MRREVIEWER = {Justin\ Brown},
       DOI = {10.37236/5038},
       URL = {https://doi.org/10.37236/5038},
}

@article{schwartz2026homfly,
  title={The HOMFLY polynomial of a forest quiver},
  author={Schwartz, Amanda},
  journal={International Mathematics Research Notices},
  volume={2026},
  number={8},
  pages={rnag069},
  year={2026},
  publisher={Oxford University Press}
}

\end{document}